\theoremstyle{plain}
\newtheorem{theorem}{Theorem}[section]
\newtheorem{corollary}[theorem]{Corollary}
\newtheorem{lemma}[theorem]{Lemma}
\newtheorem{proposition}[theorem]{Proposition}
\theoremstyle{definition}
\newtheorem{definition}[theorem]{Definition}
\newtheorem{example}[theorem]{Example}
\theoremstyle{remark}
\newtheorem{remark}[theorem]{Remark}
\def\N{{\mathbb N}}
\def\C{{\mathbb C}}
\def\P{{\mathbb P}}
\def\Q{{\mathbb Q}}
\def\cA{{\mathcal A}}
\def\cB{{\mathcal B}}
\def\cC{{\mathcal C}}
\def\cP{{\mathcal P}}
\def\c{{\boldsymbol c}}
\def\e{{\boldsymbol e}}
\def\x{{\boldsymbol x}}
\def\0{{\boldsymbol 0}}
\def\1{{\boldsymbol 1}}
\def\Hom{{\rm Hom}}
\def\End{{\rm End}}
\def\Im{{\rm Im}}
\def\Ker{{\rm Ker}}
\def\Dom{{\rm Dom}}
\def\Indef{{\rm Indef}}
\def\rank{{\rm rank}}
\def\graph{{\rm graph}}
\def\Map{{\rm Map}}
\def\Hinge{{\rm Hinge}}
\def\Det{{\rm Det}}
\newif\if@borderstar
\def\bordermatrix{\@ifnextchar*{%
 \@borderstartrue\@bordermatrix@i}{\@borderstarfalse\@bordermatrix@i*}%
}
\def\@bordermatrix@i*{\@ifnextchar[{\@bordermatrix@ii}{\@bordermatrix@ii[()]}}
\def\@bordermatrix@ii[#1]#2{%
\begingroup
 \m@th\@tempdima8.75\p@\setbox\z@\vbox{%
 \def\cr{\crcr\noalign{\kern 2\p@\global\let\cr\endline }}%
 \ialign {$##$\hfil\kern 2\p@\kern\@tempdima & \thinspace %
  \hfil $##$\hfil && \quad\hfil $##$\hfil\crcr\omit\strut %
  \hfil\crcr\noalign{\kern -\baselineskip}#2\crcr\omit %
  \strut\cr}}%
 \setbox\tw@\vbox{\unvcopy\z@\global\setbox\@ne\lastbox}%
 \setbox\tw@\hbox{\unhbox\@ne\unskip\global\setbox\@ne\lastbox}%
 \setbox\tw@\hbox{%
  $\kern\wd\@ne\kern -\@tempdima\left\@firstoftwo#1%
  \if@borderstar\kern 2pt\else\kern -\wd\@ne\fi%
 \global\setbox\@ne\vbox{\box\@ne\if@borderstar\else\kern 2\p@\fi}%
 \vcenter{\if@borderstar\else\kern -\ht\@ne\fi%
  \unvbox\z@\kern -\if@borderstar2\fi\baselineskip}%
 \if@borderstar\kern-2\@tempdima\kern2\p@\else\,\fi\right\@secondoftwo#1 $%
 }\null \;\vbox{\kern\ht\@ne\box\tw@}%
\endgroup
}
\begin{document}

\title{Projective Linear Monoids and Hinges}
\author{Mutsumi Saito}
\address{Department of Mathematics,
Hokkaido University, Sapporo, 060-0810, Japan}
\subjclass[2010]{54H15}


\maketitle

\begin{abstract}
Let $V$ be a complex vector space.
We propose a compactification $\P M(V)$ of the projective linear group
$PGL(V)$, which can act on the projective space $\P(V)$.
After proving some properties of $\P M(V)$,
we consider its relation to 
Neretin's compactification $\Hinge^*(V)$.

\smallskip\noindent
\textsc{Keywords.} Projective linear group, Compactification,
Monoid, Hinges.
\end{abstract}


\section{Introduction}

From enumerative algebraic geometry and from other motivations,
compactifications of symmetric spaces 
have been studied by many people: Semple \cite{Semple51},
De Concini-Procesi \cite{Wonderful}, etc.
In \cite{Neretin98},
Neretin defined and studied hinges to
describe the boundaries of the compactifications
in an elementary manner.

Let $V$ be a complex vector space.
The projective linear group $PGL(V)$ naturally acts on the projective
space $\P(V)$.
The compactifications of $PGL(V)$ mentioned above, however, 
do not act on $\P(V)$.
Since there are cases in which
it is useful to consider the limit of actions on $\P(V)$ 
(e.g. see \cite{Saito17}, \cite{Saito-Takeda}),
in this paper we propose another compactification $\P M(V)$ of
$PGL(V)$, which can act on $\P(V)$.

Our construction is very similar to Neretin's $\Hinge^*(V)$;
indeed we have a surjective continuous map from an open dense set
of $\P M(V)$ to $\Hinge^*(V)$.
However the monoid structures of $\P M(V)$ and that of a variant of $\Hinge^*(V)$
($\Hinge^*(V)$ is not a monoid) are different.
$\P M(V)$ has the monoid structure compatible with the one
in $\Map(\P(V))$, the set of mappings from $\P(V)$
into itself, while the monoid structure of
a variant of $\Hinge^*(V)$ is compatible
with the one in $\prod_{i=1}^{n-1}\End(\bigwedge^i V)$,
where $n=\dim V$.

This paper is organized as follows.
In Section 2, we define $\P M(V)$ and some other related sets.
In Section 3,
we introduce a topology
in $\P M(V)$ and
prove some expected topological properties.
To prove the compactness of $\P M(V)$ in Section 5,
we recall nets following \cite{Nagata} in Section 4.
We devote Section 5 to the compactness of $\P M(V)$.
By showing that every maximal net of $\P M(V)$ converges,
we prove that
$\P M(V)$ is compact (Theorem \ref{thm:compact}),
which is the main theorem in this paper.
In Section 6, we consider the monoid structure of
$\P M(V)$.
We see that the natural mapping from $\P M(V)$ to
$\Map(\P(V))$ is a monoid-homomorphism 
(Proposition \ref{prop:monoid-hom}).
We thus see that $\P M(V)$ acts on $\P(V)$.
In Sections 7 and 8, we compare $\P M(V)$ with $\Hinge^*(V)$.
We see that we have a surjective continuous map 
from an open dense set
of $\P M(V)$ to $\Hinge^*(V)$
(Theorem \ref{thm:lambdaConti},
Corollary \ref{cor:PM_HtoHingeConti}).

The author would like to thank Professor Hiroshi Yamashita
for his kind suggestions.

\section{Definitions}

Let $V$ be an $n$-dimensional vector space over $\C$.
Set
$$
M:=
\left\{
(A_0,A_1,\ldots, A_m)\,|\,
\begin{array}{l}
m=0,1,2,\ldots\\
0\neq A_i\in \Hom( V_i, V) \quad (0\leq i\leq m), \\
V_0=V,\,   V_{m+1}=0,\\
V_{i+1}=\Ker(A_{i})\quad (0\leq i\leq m) 
\end{array}
\right\}
$$
and
$$
\widetilde{M}
:=
\left\{
(A_0, A_1, \ldots, A_m)\,|\,
\begin{array}{ll}
m=0,1,2,\ldots &\\
A_i\in \End(V) & (\forall i),\\
\cap_{k=0}^{i-1} \Ker(A_k)\not\subseteq \Ker(A_i) & (\forall i),\\
\cap_{k=0}^{m} \Ker(A_k)=0 &
\end{array}
\right\}.
$$
Clearly $m<n=\dim V$.

For $\cA:=(A_0, A_1,\ldots, A_m)\in \widetilde{M}$,
set
$$
K(\cA)_i:=\cap_{k=0}^{i} \Ker(A_k).
$$
Then
$$
\pi(\cA):=(A_0, {A_1}_{|K(\cA)_0},\ldots, {A_m}_{|K(\cA)_{m-1}})\in M.
$$
Clearly $\pi: \widetilde{M}\to M$ is surjective.

Let $\cA:=(A_0, A_1,\ldots, A_m)\in {M}$.
Since $A_i\in \Hom(V_i,V)\setminus\{ 0\}$, 
we may consider the element $\overline{A_i}\in \P \Hom(V_i,V)$
represented by $A_i$, and we can define
$$
\P\cA:=(\overline{A_0}, \overline{A_1},\ldots, \overline{A_m}).
$$
Let $\P M=\P M(V)$ denote the image of $M$ under $\P$.

Let ${\rm Map}(\P(V))$ denote the set of mappings from $\P(V)$
into $\P(V)$.
Then each $\P\cA\in \P M$ defines $\Phi(\P \cA)\in {\rm Map}(\P(V))$
by
$$
\Phi(\P\cA)(\overline{x})=
\overline{A_i x}
\quad
\text{if $x\in V_i\setminus V_{i+1}$},
$$
where
$\overline{x}\in \P(V)$ is the element represented by $x\in V$.

We have the mappings:
\begin{equation}
\widetilde{M}\overset{\pi}{\longrightarrow}
M
\overset{\P}{\longrightarrow}
\P M
\overset{\Phi}{\longrightarrow}
{\rm Map}(\P(V)).
\end{equation}

Clearly $GL(V)\subseteq M, \widetilde{M}$ as a one-term sequence,
 and
$PGL(V)\subseteq \P M, \Phi(\P M)$.

\begin{example}
\label{ex:MneqMtilde}
Let $V=\C^2$, and let
$$
A:=
\begin{bmatrix}
1 & 0\\
0 & 0
\end{bmatrix},
\quad
B:=
\begin{bmatrix}
0 & 1\\
0 & 0
\end{bmatrix}.
$$
Then
$\P\pi((A, B))\neq \P\pi((B,A))$ in $\P{M}$,
but
$\Phi\P\pi((A, B))= \Phi\P\pi((B,A))$ in ${\rm Map}(\P(V))$.
\end{example}

\section{Topology}

We fix a Hermitian inner product on $V$.
Let $W$ be a subspace of $V$.
Via this inner product, 
by considering $V=W\oplus W^\perp$, we regard 
$\Hom(W,V)$ as a subspace of $\End(V)$.

We consider the classical topology 
in $\P\Hom(W,V)$ for any subspace $W$ of $V$.

Let $\cA=(A_0,A_1,\ldots, A_m)\in {M}$.
Then
$A_{i}\in \Hom(V_i, V)$, where $V_i=V(\cA)_i=\Ker(A_{i-1})$.

Let $U_i$ be a neighborhood of $\overline{A_i}$
in $\P\Hom(V(\cA)_i, V)$.
Then set
$$
U_{\P\cA}(U_0,\ldots,U_m)
:=
\left\{ \P\cB\in \P M\,|\,
\begin{array}{l}
\text{for every $i$, there exists $j$ such}\\
\text{that $V(\cB)_j\supseteq V(\cA)_i$ and}\\
\text{$\overline{{B_j}_{|V(\cA)_i}} \in U_i$}
\end{array}
\right\}.
$$

\begin{lemma}
\label{rem:Uniqueness}
Let $\P\cB\in U_{\P\cA}(U_0,\ldots,U_m)$.
Then, for every $i$, there exists {\it unique} $j$ such
that $V(\cB)_j\supseteq V(\cA)_i$ and $\overline{{B_j}_{|V(\cA)_i}} \in U_i$.
Indeed, $j$ must be the maximal $j$ satisfying
$V(\cB)_j\supseteq V(\cA)_i$.
\end{lemma}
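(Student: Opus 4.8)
The plan is to reduce everything to two observations. First, for any $\cB=(B_0,\ldots,B_l)\in M$ the associated subspaces form a \emph{strictly} decreasing flag $V=V(\cB)_0\supsetneq V(\cB)_1\supsetneq\cdots\supsetneq V(\cB)_{l+1}=0$; strictness holds because each $B_j$ is nonzero, so $\Ker(B_j)\subsetneq V(\cB)_j$. Second, the symbol $\overline{{B_j}_{|V(\cA)_i}}$ denotes an element of $\P\Hom(V(\cA)_i,V)$ and is therefore meaningful only when ${B_j}_{|V(\cA)_i}\neq 0$; thus the condition ``$\overline{{B_j}_{|V(\cA)_i}}\in U_i$'' tacitly carries $V(\cA)_i\not\subseteq\Ker(B_j)=V(\cB)_{j+1}$. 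I would record this convention explicitly at the start of the proof.

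Next I would fix $i$ and introduce $j_0:=\max\{\,j: V(\cB)_j\supseteq V(\cA)_i\,\}$. This is well defined since $V(\cB)_0=V\supseteq V(\cA)_i$, and $j_0\leq l$ because $V(\cB)_{l+1}=0$ while $V(\cA)_i\neq 0$ (indeed $V(\cA)_i\supsetneq V(\cA)_{i+1}\supsetneq\cdots\supsetneq 0$ for every $0\leq i\leq m$, so $V(\cA)_i\neq 0$). Because the flag is decreasing, $\{\,j: V(\cB)_j\supseteq V(\cA)_i\,\}=\{0,1,\ldots,j_0\}$, and by maximality $V(\cA)_i\not\subseteq V(\cB)_{j_0+1}$, i.e.\ ${B_{j_0}}_{|V(\cA)_i}\neq 0$, so $\overline{{B_{j_0}}_{|V(\cA)_i}}$ genuinely lives in $\P\Hom(V(\cA)_i,V)$.

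Finally I would take an arbitrary $j$ with $V(\cB)_j\supseteq V(\cA)_i$ and $\overline{{B_j}_{|V(\cA)_i}}\in U_i$ — one such $j$ exists precisely by the hypothesis $\P\cB\in U_{\P\cA}(U_0,\ldots,U_m)$ — and show $j=j_0$. The inclusion $V(\cB)_j\supseteq V(\cA)_i$ gives $j\leq j_0$; conversely, if $j<j_0$ then $j+1\leq j_0$, hence $V(\cA)_i\subseteq V(\cB)_{j+1}=\Ker(B_j)$ and ${B_j}_{|V(\cA)_i}=0$, contradicting that $\overline{{B_j}_{|V(\cA)_i}}$ is an element of $\P\Hom(V(\cA)_i,V)$. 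Therefore $j=j_0$, which proves both the uniqueness and the assertion that $j$ is the maximal index with $V(\cB)_j\supseteq V(\cA)_i$. I do not expect any serious obstacle here; the only point that must not be glossed over is the built-in non-vanishing hidden in the notation $\overline{(\,\cdot\,)}$, since that is exactly what rules out any smaller value of $j$.
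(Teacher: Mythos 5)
Your proof is correct and uses the same key mechanism as the paper: if $j$ is not maximal among indices with $V(\cB)_j\supseteq V(\cA)_i$, then $V(\cA)_i\subseteq V(\cB)_{j+1}=\Ker(B_j)$, so ${B_j}_{|V(\cA)_i}=0$, contradicting $\overline{{B_j}_{|V(\cA)_i}}\in U_i$. Your explicit remark about the non-vanishing hidden in the notation $\overline{(\,\cdot\,)}$ is exactly the point the paper's proof also relies on; the two arguments differ only in packaging.
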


\begin{proof}
Suppose that there exist $j_1, j_2$ with $j_1<j_2$ such that 
$V(\cB)_{j_1}, V(\cB)_{j_2}\supseteq V(\cA)_i$ and 
$\overline{{B_{j_1}}_{|V(\cA)_i}} \in U_i$.

Since $j_1<j_2$, we have ${B_{j_1}}_{|V(\cB)_{j_2}}=0$.
Hence ${B_{j_1}}_{|V(\cA)_i}=0$, which contradicts the fact that
$\overline{{B_{j_1}}_{|V(\cA)_i}} \in U_i$.
\end{proof}

\begin{lemma}
\label{lem:TopPM}
The set 
$$
\{ U_{\P\cA}(U_0,\ldots,U_m)\,|\,
\text{$U_i$ is a neighborhood of 
$\overline{A_i}$ $(0\leq i\leq m)$}\}
$$
satisfies the axiom of a base of neighborhoods
of $\P\cA$, and
hence defines a topology in $\P M$.
\end{lemma}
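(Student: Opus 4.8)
The plan is to verify the three axioms (see \cite{Nagata}) under which a family of subsets $\mathcal{B}(\P\cA)$ attached to each point $\P\cA$ — here, the displayed family of sets $U_{\P\cA}(U_0,\ldots,U_m)$ — forms a system of neighborhood bases for a (unique) topology: (i) $\mathcal{B}(\P\cA)\neq\emptyset$ and $\P\cA\in U$ for every $U\in\mathcal{B}(\P\cA)$; (ii) for $U,U'\in\mathcal{B}(\P\cA)$ there is $U''\in\mathcal{B}(\P\cA)$ with $U''\subseteq U\cap U'$; (iii) for $U\in\mathcal{B}(\P\cA)$ and $\P\cB\in U$ there is $U'\in\mathcal{B}(\P\cB)$ with $U'\subseteq U$. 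Axioms (i) and (ii) are immediate: for (i), taking $\cB=\cA$ with witness $j=i$ for each $i$ shows $\P\cA\in U_{\P\cA}(U_0,\ldots,U_m)$, and the family is nonempty since each $\P\Hom(V(\cA)_i,V)$ is an admissible $U_i$; for (ii), the set $U_{\P\cA}(U_0\cap U_0',\ldots,U_m\cap U_m')$ works, directly from the definition.

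The substance is (iii). Fix $U:=U_{\P\cA}(U_0,\ldots,U_m)$ and $\P\cB\in U$, $\cB=(B_0,\ldots,B_{m'})$. By Lemma \ref{rem:Uniqueness}, to each $i\in\{0,\ldots,m\}$ is attached a \emph{well-defined} index $j(i)$ with $V(\cB)_{j(i)}\supseteq V(\cA)_i$ and $\overline{{B_{j(i)}}_{|V(\cA)_i}}\in U_i$; in particular ${B_{j(i)}}_{|V(\cA)_i}\neq0$. Let $D_i$ be the open subset of classes $\overline{T}\in\P\Hom(V(\cB)_{j(i)},V)$ with $T_{|V(\cA)_i}\neq0$ (a scale-invariant condition); restriction of homomorphisms is linear and continuous, so it induces a continuous map $\rho_i\colon D_i\to\P\Hom(V(\cA)_i,V)$, and $\overline{B_{j(i)}}\in D_i$ with $\rho_i(\overline{B_{j(i)}})\in U_i$. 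By continuity pick a neighborhood $U'_{j(i),i}$ of $\overline{B_{j(i)}}$ inside $D_i$ with $\rho_i(U'_{j(i),i})\subseteq U_i$. For $j\in\{0,\ldots,m'\}$ set $U_j':=\bigcap_{i:\,j(i)=j}U'_{j,i}$ — a finite, hence admissible, intersection of neighborhoods of $\overline{B_j}$ (when the index set is empty, take $U_j'=\P\Hom(V(\cB)_j,V)$). Then I claim $U_{\P\cB}(U_0',\ldots,U_{m'}')\subseteq U$: given $\P\cC$ in the left-hand side and $i\in\{0,\ldots,m\}$, put $j:=j(i)$ and pick (by Lemma \ref{rem:Uniqueness}) the index $l$ with $V(\cC)_l\supseteq V(\cB)_j$ and $\overline{{C_l}_{|V(\cB)_j}}\in U_j'$; then $V(\cC)_l\supseteq V(\cB)_j\supseteq V(\cA)_i$, and since $U_j'\subseteq U'_{j,i}\subseteq D_i$ and restriction is transitive, $\overline{{C_l}_{|V(\cA)_i}}=\rho_i\bigl(\overline{{C_l}_{|V(\cB)_j}}\bigr)\in U_i$. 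Hence $\P\cC\in U$, as desired.

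The step I expect to be the main obstacle is precisely this verification of (iii): one must push the ``for every $i$ there exists $j$'' quantifier structure of $U_{\P\cA}$ through the intermediate point $\P\cB$, keeping track of the chain $V(\cA)_i\subseteq V(\cB)_{j(i)}\subseteq V(\cC)_l$ and the transitivity of restriction, and one must bundle the finitely many neighborhoods $U'_{j,i}$ (indexed by the various $i$ with $j(i)=j$) into a single neighborhood $U_j'$ of $\overline{B_j}$. Lemma \ref{rem:Uniqueness} is exactly what makes $i\mapsto j(i)$ a genuine function, and hence makes this bundling legitimate; the only non-combinatorial ingredient is the continuity of the restriction maps $\rho_i$ on the relevant open subsets of projective space. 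Everything else is bookkeeping, and once the topology has been introduced in this way it is automatic that the sets $U_{\P\cA}(U_0,\ldots,U_m)$ constitute a base of neighborhoods of $\P\cA$.
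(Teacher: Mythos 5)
Your proposal is correct and follows essentially the same route as the paper: axiom (iii) is handled by pulling back the $U_i$ along the restriction maps $\rho^{V(\cB)_{j}}_{V(\cA)_i}$ to build neighborhoods $U'_j$ of $\overline{B_j}$, with Lemma \ref{rem:Uniqueness} making the assignment $i\mapsto j(i)$ well defined — your $U'_j$ coincides (up to harmless shrinking) with the paper's, and you merely spell out the final inclusion $U_{\P\cB}(U'_0,\ldots)\subseteq U_{\P\cA}(U_0,\ldots,U_m)$ that the paper leaves implicit. Your observation that only the easy inclusion is needed for the intersection axiom (the paper proves the full equality, using the uniqueness lemma for the other direction) is also accurate.
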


\begin{proof}
First we show
$$U_{\P\cA}(U_0,\ldots,U_m)\cap U_{\P\cA}(U'_0,\ldots,U'_m)
=U_{\P\cA}(U_0\cap U'_0,\ldots,U_m\cap U'_m).
$$
Since `$\supseteq$' is clear, we show `$\subseteq$'.

Let $\P\cB\in U_{\P\cA}(U_0,\ldots,U_m)\cap U_{\P\cA}(U'_0,\ldots,U'_m)$.
Take any $0\leq i\leq m$. Then there exist $j$ and $j'$ such that
$V(\cB)_j, V(\cB)_{j'}\supseteq V(\cA)_i$ and
$\overline{{B_j}_{|V(\cA)_i}}\in U_i$,
$\overline{{B_{j'}}_{|V(\cA)_i}}\in U'_i$.
By Lemma \ref{rem:Uniqueness}, we have $j=j'$,
and thus $\P\cB\in U_{\P\cA}(U_0\cap U'_0,\ldots,U_m\cap U'_m)$.

Let $\P\cB\in U_{\P\cA}(U_0,\ldots,U_m)$.
Put
$$
U'_j:=\left\{
\overline{C}\in \P\Hom(V(\cB)_j, V)\,|\,
\begin{array}{l}
\text{$\overline{C_{|V(\cA)_i}}\in U_i$}\\
\text{for all $i$ with $V(\cB)_j\supseteq V(\cA)_i$}\\
\text{and $\overline{{B_j}_{|V(\cA)_i}}\in U_i$}
\end{array}
\right\}.
$$
For subspaces $W'\subseteq W$ of $V$, set
$$
\P\Hom(W, V)_{W'}:=
\{ \overline{B}\in \P\Hom(W, V)\,|\, B_{|W'}\neq 0\},
$$
and let
$$
\rho^W_{W'}: \P\Hom(W, V)_{W'}\to \P\Hom(W', V)
$$
denote the restriction map.

Note that $\P\Hom(W, V)_{W'}$ is open in
$\P\Hom(W, V)$.
We see that $U'_j (\ni \overline{B_j})$ is open since
$U'_j$ is the intersection of
$(\rho^{V(\cB)_j}_{V(\cA)_i})^{-1}(U_i)$'s.
Then we have
$$
U_{\P\cB}(U'_0,U'_1,\ldots)\subseteq U_{\P\cA}(U_0,\ldots,U_m).
$$
\end{proof}

From now on, we consider $\P M$ as a topological space by Lemma \ref{lem:TopPM}.

\begin{lemma}
\label{PGL:RelativeUsual}
The relative topology of $PGL(V)$ induced from the topology of $\P M$
defined in Lemma \ref{lem:TopPM} coincides with the usual topology of $PGL(V)$
{\rm (}the relative topology induced from the topology of 
$\P\End(V)${\rm )}.
\end{lemma}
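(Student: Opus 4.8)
The plan is to show that the two topologies on $PGL(V)$ agree by comparing bases of neighborhoods at an arbitrary point. Fix $g\in PGL(V)$, represented by $A_0\in GL(V)\subseteq M$ as a one-term sequence, so that $V(\cA)_0 = V$ and $V(\cA)_1 = \Ker(A_0) = 0$. A basic $\P M$-neighborhood of $g$ has the form $U_{\P\cA}(U_0)$ where $U_0$ is a neighborhood of $\overline{A_0}$ in $\P\End(V) = \P\Hom(V,V)$.

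\medskip

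First I would unwind the definition of $U_{\P\cA}(U_0)$ in this special case. For $\P\cB\in\P M$, the condition "for every $i$, there exists $j$ with $V(\cB)_j\supseteq V(\cA)_i$ and $\overline{{B_j}_{|V(\cA)_i}}\in U_0$" only involves $i=0$, where $V(\cA)_0 = V$; the only subspace of $V$ containing $V$ is $V$ itself, so the requirement forces $V(\cB)_0 = V$ (always true) and, by Lemma \ref{rem:Uniqueness}, $j=0$ is the unique index, giving $\overline{B_0}\in U_0$ with $B_0\in\End(V)$, $B_0\neq 0$. Thus
$$
U_{\P\cA}(U_0)\cap PGL(V) = \{\,\overline{B_0}\in PGL(V)\mid \overline{B_0}\in U_0\,\},
$$
which is exactly $U_0\cap PGL(V)$, the trace on $PGL(V)$ of a neighborhood of $g$ in $\P\End(V)$. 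Since $U_0$ ranges over all neighborhoods of $\overline{A_0}$ in $\P\End(V)$, the family $\{U_{\P\cA}(U_0)\cap PGL(V)\}$ is precisely a neighborhood base of $g$ for the usual topology.

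\medskip

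The one point that requires a little care—and is really the only potential obstacle—is the direction showing that every $\P M$-open set meeting $PGL(V)$ restricts to a usual-open subset, i.e. that no \emph{larger} $\P M$-neighborhoods appear when we also allow basic neighborhoods $U_{\P\cB}(U'_0,\ldots,U'_k)$ centered at other points $\P\cB$ of $\P M$ that happen to contain $g$. But by Lemma \ref{lem:TopPM} the sets $U_{\P\cA}(U_0,\ldots,U_m)$ form a base, so any $\P M$-open set containing $g$ already contains some basic neighborhood of $g$ of the form $U_{\P\cA}(U_0)$ with $\cA = (A_0)$ a one-term sequence representing $g$; the computation above then shows its trace on $PGL(V)$ contains $U_0\cap PGL(V)$, hence is a usual-neighborhood of $g$. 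Conversely every usual-neighborhood of $g$ in $PGL(V)$ contains $U_0\cap PGL(V)$ for some neighborhood $U_0$ of $\overline{A_0}$ in $\P\End(V)$, and we have just identified this with $U_{\P\cA}(U_0)\cap PGL(V)$. Since the two identifications match up, the relative topology and the usual topology have the same neighborhood filter at each $g$, so they coincide.
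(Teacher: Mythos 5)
Your proof is correct and follows essentially the same route as the paper: both directions come down to the identity $U_{\P g}(U_0)\cap PGL(V)=U_0\cap PGL(V)$ for $g\in PGL(V)$. The only (harmless) difference is in the direction ``relatively open $\Rightarrow$ open in the usual topology'': the paper directly computes the trace of an arbitrary basic neighborhood $U_{\P\cA}(U_0,\ldots,U_m)$ (centered at a possibly non-invertible $\P\cA$) as $PGL(V)\cap\bigcap_{i}(\rho^V_{V(\cA)_i})^{-1}(U_i)$, whereas you sidestep this by invoking that the basic neighborhoods centered at $g$ itself already form a neighborhood base at $g$ for the $\P M$-topology; both arguments are valid.
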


\begin{proof}
Let $U_0$ be an open subset of $PGL(V)$ in the usual topology, and
let $B\in U_0$. Then $U_0=U_B(U_0)=U_B(U_0)\cap PGL(V)$.
Hence $U_0$ is open in the relative topology as well.

Next consider $U_{\P\cA}(U_0,\ldots,U_m)\cap PGL(V)$.
By definition
$$
U_{\P\cA}(U_0,\ldots,U_m)\cap PGL(V)
=PGL(V)\cap \bigcap_{i=0}^m (\rho^V_{V(\cA)_i})^{-1}(U_i).
$$
Hence $U_{\P\cA}(U_0,\ldots,U_m)\cap PGL(V)$
is open in the usual topology.
\end{proof}

\begin{proposition}
$\P M$ is Hausdorff.
\end{proposition}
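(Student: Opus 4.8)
The plan is to take two distinct points $\P\cA, \P\cB \in \P M$ and produce disjoint basic neighborhoods. Write $\cA = (A_0,\dots,A_m)$ and $\cB = (B_0,\dots,B_\ell)$ with associated flags $V = V(\cA)_0 \supsetneq V(\cA)_1 \supsetneq \cdots$ and similarly for $\cB$. First I would separate into two cases according to whether the two flags $\{V(\cA)_i\}$ and $\{V(\cB)_j\}$ coincide as chains of subspaces or not.

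\textbf{Case 1: the flags differ.} Then there is some subspace, say $V(\cA)_i$, that does not appear among the $V(\cB)_j$'s (or vice versa). I would use Lemma~\ref{rem:Uniqueness}: if $\P\cB' \in U_{\P\cA}(U_0,\dots,U_m)$, then for each $i$ the index $j$ with $V(\cB')_j \supseteq V(\cA)_i$ and $\overline{{B'_j}_{|V(\cA)_i}} \in U_i$ is the \emph{maximal} $j$ with $V(\cB')_j \supseteq V(\cA)_i$. The point is that membership in $U_{\P\cA}(U_0,\dots,U_m)$ with small enough $U_i$ forces the flag of $\cB'$ to ``refine'' the flag of $\cA$ in a controlled way, and by symmetry membership in a small $U_{\P\cB}(\dots)$ forces refinement the other direction; a sequence in both would have to have a flag refining both $\{V(\cA)_i\}$ and $\{V(\cB)_j\}$, but the restriction maps $\rho$ plus the neighborhood conditions, when $U_i$'s are taken to be small balls, actually pin down each step $V(\cB')_j \cap V(\cA)_i$ versus $V(\cA)_{i+1}$ via the condition that $\overline{{B'_j}_{|V(\cA)_i}}$ is close to $\overline{A_i}$, whose kernel is exactly $V(\cA)_{i+1}$. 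So shrinking neighborhoods forces the flags to agree, contradicting Case~1. I would make this precise by choosing $U_i$ small enough that every $\overline{C} \in U_i$ has $\dim \Ker C = \dim \Ker A_i$ and $\Ker C$ close to $\Ker A_i$ (possible since rank is lower semicontinuous and the relevant stratum is locally closed).

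\textbf{Case 2: the flags coincide}, i.e. $m = \ell$ and $V(\cA)_i = V(\cB)_i$ for all $i$. Then $\overline{A_i}$ and $\overline{B_i}$ both lie in $\P\Hom(V(\cA)_i, V)$, and since $\P\cA \neq \P\cB$ there is some $i$ with $\overline{A_i} \neq \overline{B_i}$. Because $\P\Hom(V(\cA)_i, V)$ is Hausdorff in the classical topology, pick disjoint neighborhoods $U_i \ni \overline{A_i}$ and $U_i' \ni \overline{B_i}$ there; take all other $U_k, U_k'$ to be the full space. For any $\P\cC \in U_{\P\cA}(\dots) \cap U_{\P\cB}(\dots)$, by Lemma~\ref{rem:Uniqueness} the index $j$ attached to $V(\cA)_i = V(\cB)_i$ is the same from both sides (it is the maximal $j$ with $V(\cC)_j \supseteq V(\cA)_i$), so $\overline{{C_j}_{|V(\cA)_i}}$ would lie in $U_i \cap U_i' = \emptyset$, a contradiction.

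\textbf{Main obstacle.} The delicate point is Case~1: showing that the neighborhood conditions genuinely force the flags to match, rather than merely to be ``nearby'' in some Grassmannian sense. The argument hinges on choosing each $U_i$ inside the locally closed stratum $\{\overline{C} : \rank C = \rank A_i\}$ of $\P\Hom(V(\cA)_i,V)$, on which $\overline{C} \mapsto \Ker C$ is continuous, so that the nested conditions $V(\cC)_{j_0} \supseteq V(\cA)_0$, $\overline{{C_{j_0}}_{|V}} \in U_0$ first pin down $V(\cC)_{j_0+?} $ near $V(\cA)_1 = \Ker A_0$, and then induction down the flag shows the only way to satisfy all conditions simultaneously with the corresponding small neighborhoods of $\P\cB$ is to have the two flags equal. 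I expect writing this induction carefully --- keeping track of which index $j$ of $\cC$ corresponds to which step of $\cA$ versus $\cB$ --- to be the bulk of the work; everything else is routine once Lemma~\ref{rem:Uniqueness} is in hand.
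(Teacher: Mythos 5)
Your Case~2 is exactly the paper's proof: locate an index where $\overline{A_i}\neq\overline{B_i}$ with common domain $V_i$, separate by disjoint neighborhoods in $\P\Hom(V_i,V)$, and use Lemma~\ref{rem:Uniqueness} to force the two witnessing indices $j,j'$ for any $\P\cC$ in the intersection to coincide. The problem is your Case~1, which is where you yourself locate the bulk of the work. It is both unnecessary and, as sketched, not executable.

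It is unnecessary because the right case division is not ``flags coincide vs.\ flags differ'' but ``first index at which $\overline{A_i}\neq\overline{B_i}$.'' If $\overline{A_j}=\overline{B_j}$ for all $j<i$, then $A_j=c_jB_j$ with $c_j\neq 0$, so $\Ker A_j=\Ker B_j$ and hence $V(\cA)_j=V(\cB)_j$ for $j\leq i$ automatically; and the sequences cannot agree termwise with one a proper prefix of the other, since equal $i$-th terms force equal $(i+1)$-st kernels and hence simultaneous termination. So the first disagreement always happens between two maps with the \emph{same} domain $V_i$, and your Case~2 argument already covers every pair $\P\cA\neq\P\cB$. There is never a situation where the flags differ ``before'' the maps do.

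It is not executable because your plan to choose $U_i$ ``small enough that every $\overline{C}\in U_i$ has $\dim\Ker C=\dim\Ker A_i$'' is impossible: $U_i$ must be open in $\P\Hom(V(\cA)_i,V)$, and lower semicontinuity of rank means the rank can only \emph{jump up} on any neighborhood of $\overline{A_i}$; the equal-rank stratum is locally closed but not open, so no neighborhood lies inside it. Relatedly, the claim that membership in $U_{\P\cA}(U_0,\dots,U_m)$ ``forces the flag of $\cB'$ to refine the flag of $\cA$'' cannot be literally true: $PGL(V)$ is dense (Proposition~\ref{prop:dense}), so every basic neighborhood of $\P\cA$ contains points whose flag is the trivial one $V\supsetneq 0$. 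Fortunately none of this is needed once you index by the first disagreement.
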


\begin{proof}
Let $\cA, \cB\in M$ with $\P\cA\neq \P\cB$.
Then there exists $i$ such that
$\overline{A_j}=\overline{B_j}$ for all $j<i$,
and that $\overline{A_i}\neq \overline{B_i}$.
We have $V(\cA)_i=V(\cB)_i=:V_i$ and 
$\overline{A_i}, \overline{B_i}\in \P\Hom(V_i, V)$.
Since $\P\Hom(V_i, V)$ is Hausdorff, there exist
neighborhoods $U_A$ and $U_B$ of $\overline{A_i}$ and $\overline{B_i}$,
respectively, with $U_A\cap U_B=\emptyset$.

Let $U_j$ be a neighborhood of $\overline{A_j}=\overline{B_j}$
for $j<i$, and consider
$$
U_{\P\cA}(U_0,U_1,\ldots,U_{i-1}, U_A,\ldots)
\cap
U_{\P\cB}(U_0,U_1,\ldots,U_{i-1}, U_B,\ldots).
$$
Suppose that
$$
\P\cC\in U_{\P\cA}(U_0,U_1,\ldots,U_{i-1}, U_A,\ldots)
\cap
U_{\P\cB}(U_0,U_1,\ldots,U_{i-1}, U_B,\ldots).
$$
Then there exist $j,k$ such that
$V(\cC)_j, V(\cC)_k\supseteq V_i$, 
$\overline{{C_j}_{|V_i}}\in U_A$, and
$\overline{{C_k}_{|V_i}}\in U_B$.
By Lemma \ref{rem:Uniqueness}, we have $j=k$,
which contradicts the fact that
$U_A\cap U_B=\emptyset$.
Hence
$$
U_{\P\cA}(U_0,U_1,\ldots,U_{i-1}, U_A,\ldots)
\cap
U_{\P\cB}(U_0,U_1,\ldots,U_{i-1}, U_B,\ldots)
=\emptyset.
$$
\end{proof}

\begin{proposition}
\label{prop:2ndCountability}
$\P{M}$ satisfies the first axiom of countability.
\end{proposition}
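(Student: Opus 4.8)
The plan is to write down, at an arbitrary point $\P\cA=(\overline{A_0},\ldots,\overline{A_m})\in\P M$, an explicit countable base of neighborhoods assembled from neighborhood bases in the finitely many projective spaces $\P\Hom(V(\cA)_i,V)$, and to verify it is a base using the monotonicity of the basic open sets $U_{\P\cA}(U_0,\ldots,U_m)$ in their arguments.

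First I would observe that for each $i$ with $0\leq i\leq m$ the space $\P\Hom(V(\cA)_i,V)$ is the projectivization of a finite-dimensional complex vector space, hence is metrizable; therefore $\overline{A_i}$ has a countable base of neighborhoods $\{U_i^{(k)}\}_{k\in\N}$ in $\P\Hom(V(\cA)_i,V)$, and we may arrange it to be decreasing in $k$ by passing to finite intersections.

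Next, since $m<n=\dim V$, the index set $\N^{m+1}$ is countable, so
$$
\mathcal{B}:=\{\,U_{\P\cA}(U_0^{(k_0)},\ldots,U_m^{(k_m)})\mid(k_0,\ldots,k_m)\in\N^{m+1}\,\}
$$
is a countable family of neighborhoods of $\P\cA$ by Lemma \ref{lem:TopPM}. To see that $\mathcal{B}$ is a base at $\P\cA$, take any basic neighborhood $U_{\P\cA}(U_0,\ldots,U_m)$ of $\P\cA$ and choose, for each $i$, an index $k_i$ with $U_i^{(k_i)}\subseteq U_i$. The operation $U_{\P\cA}(\cdot)$ is monotone — if $U_i'\subseteq U_i$ for every $i$, then $U_{\P\cA}(U_0',\ldots,U_m')\subseteq U_{\P\cA}(U_0,\ldots,U_m)$, which is immediate from the definition, and is also a special case of the identity $U_{\P\cA}(U_0,\ldots,U_m)\cap U_{\P\cA}(U_0',\ldots,U_m')=U_{\P\cA}(U_0\cap U_0',\ldots,U_m\cap U_m')$ established in the proof of Lemma \ref{lem:TopPM}. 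Hence $U_{\P\cA}(U_0^{(k_0)},\ldots,U_m^{(k_m)})\subseteq U_{\P\cA}(U_0,\ldots,U_m)$, exhibiting the required member of $\mathcal{B}$.

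The argument has no real obstacle: the one point that must not be overlooked is that the length $m+1$ of a sequence representing a point of $\P M$ is uniformly bounded (by $n$), so that a finite product of countable neighborhood bases remains countable; everything else is formal, the monotonicity of $U_{\P\cA}(\cdot)$ being already implicit in Lemma \ref{lem:TopPM}.
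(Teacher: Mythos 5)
Your proof is correct and is exactly the argument the paper has in mind: the paper's proof simply notes that each $\P\Hom(V(\cA)_i,V)$ is first countable and declares the conclusion clear, while you fill in the (routine) verification via the monotonicity of $U_{\P\cA}(\cdot)$ and the finiteness of $m+1\leq n$. No issues.
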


\begin{proof}
Let $\P\cA\in \P M$.
Since each $\P\Hom(V(\cA)_i,V)$ 
satisfies the first axiom of countability,
it is clear that $\P\cA$ has a
neighborhood base consisting of countably-many neighborhoods.
\end{proof}

We introduce the weak topology in 
$\Map(\P(V))$.

\begin{definition}[Weak Topology]
Let $f\in \Map(\P(V))$.
For each $\overline{x_1}, \overline{x_2},
\ldots, \overline{x_m}\in \P(V)$, 
take a neighborhood $U_i$ of $f(\overline{x_i})$.
Put
$$
U_f( \overline{x_1},\ldots, \overline{x_m};
 U_1, U_2,\ldots, U_m):=\{
g\in \Map(\P(V))\,|\,
g(\overline{x_i})\in U_i\quad (\forall i)\}.
$$
We introduce the topology in $\Map(\P(V))$
with
$$
\{ U_f(\overline{x_1},\ldots, \overline{x_m};
 U_1, U_2,\ldots, U_m)\}
$$
as a base of neighborhoods of $f$.
\end{definition}

Clearly $\Map(\P(V))$ is Hausdorff.

\begin{proposition}
\label{prop:PhiPisConti}
$\Phi : \P M\to \Map(\P(V))$ is continuous.
\end{proposition}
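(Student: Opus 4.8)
The plan is to verify continuity pointwise by checking that the preimage under $\Phi$ of a basic open set $U_f(\overline{x_1},\ldots,\overline{x_m};U_1,\ldots,U_m)$ in $\Map(\P(V))$ contains, around each of its points $\P\cA$, a basic open neighborhood $U_{\P\cA}(W_0,\ldots,W_m)$ of the type introduced in Lemma \ref{lem:TopPM}. So fix $\P\cA=(\overline{A_0},\ldots,\overline{A_m})$ with $f=\Phi(\P\cA)$, and fix points $\overline{x_1},\ldots,\overline{x_s}$ and neighborhoods $U_1,\ldots,U_s$ of $f(\overline{x_1}),\ldots,f(\overline{x_s})$. It suffices to handle a single point $\overline{x}$ and a single neighborhood $U$ of $f(\overline{x})$, since finite intersections of basic neighborhoods of $\P\cA$ contain a basic neighborhood by the first part of the proof of Lemma \ref{lem:TopPM}.

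So suppose $\overline{x}\in\P(V)$, represented by $x\in V$, and let $i$ be the unique index with $x\in V(\cA)_i\setminus V(\cA)_{i+1}$; then $f(\overline{x})=\overline{A_i x}$. Choose a neighborhood $U$ of $\overline{A_i x}$ in $\P(V)$. The evaluation map $\P\Hom(V(\cA)_i,V)_{\langle x\rangle}\to\P(V)$, $\overline{C}\mapsto\overline{Cx}$, is continuous on the open set where $Cx\neq 0$ (it is induced by a linear-in-$C$ formula), and $\overline{A_i}$ lies in that open set since $A_i x\neq 0$; hence there is a neighborhood $U_i$ of $\overline{A_i}$ in $\P\Hom(V(\cA)_i,V)$ such that $\overline{C_{|\langle x\rangle}}$ is defined and $\overline{Cx}\in U$ for all $\overline{C}\in U_i$. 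For the indices $k\neq i$ take $U_k$ to be any neighborhood of $\overline{A_k}$. I claim $U_{\P\cA}(U_0,\ldots,U_m)\subseteq\Phi^{-1}(U_f(\overline{x};U))$. Indeed, let $\P\cB\in U_{\P\cA}(U_0,\ldots,U_m)$; by Lemma \ref{rem:Uniqueness} there is a unique $j$ with $V(\cB)_j\supseteq V(\cA)_i$ and $\overline{{B_j}_{|V(\cA)_i}}\in U_i$. The key point is to identify the stratum of $\overline{x}$ with respect to $\cB$: since $x\in V(\cA)_i\subseteq V(\cB)_j$ we have $x\in V(\cB)_j$, and since $\overline{{B_j}_{|V(\cA)_i}}\in U_i$ forces $B_j x\neq 0$ (restriction of a map in $U_i$ to $\langle x\rangle$ is nonzero by the choice of $U_i$), we get $x\notin\Ker(B_j)=V(\cB)_{j+1}$, so $x\in V(\cB)_j\setminus V(\cB)_{j+1}$. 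Therefore $\Phi(\P\cB)(\overline{x})=\overline{B_j x}$, and $\overline{B_j x}\in U$ because $\overline{{B_j}_{|V(\cA)_i}}\in U_i$ and $U_i$ was chosen so that its elements send $\overline{x}$ into $U$. Hence $\Phi(\P\cB)\in U_f(\overline{x};U)$, as desired.

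The only genuinely delicate step is the stratum-matching: one must check that the index $j$ produced by Lemma \ref{rem:Uniqueness} for the subspace $V(\cA)_i$ is precisely the stratum index of $\overline{x}$ under $\cB$, i.e.\ that $x\in V(\cB)_j$ but $x\notin V(\cB)_{j+1}$. The first inclusion is immediate from $V(\cB)_j\supseteq V(\cA)_i\ni x$; the non-membership is exactly where the careful choice of $U_i$ (small enough that every $\overline{C}\in U_i$ has $Cx\neq 0$, which is possible because $A_i x\neq 0$ and $\{\,\overline{C}:Cx\neq 0\,\}$ is open) is used. Everything else is a routine unwinding of the definitions of the two topologies together with the continuity of the linear evaluation map $\overline{C}\mapsto\overline{Cx}$ on its natural domain.
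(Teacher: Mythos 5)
Your proposal is correct and follows essentially the same route as the paper: choose $U_i$ to be a neighborhood of $\overline{A_i}$ pulled back from $U$ under the evaluation $\overline{C}\mapsto\overline{Cx}$, take the remaining $U_k$ arbitrary, and check that any $\P\cB\in U_{\P\cA}(U_0,\ldots,U_m)$ sends $\overline{x}$ into $U$, the point $B_jx\neq 0$ giving the correct stratum. Your explicit reduction to a single point and your remark that $U_i$ must lie in the open locus where $Cx\neq 0$ are details the paper leaves implicit, but the argument is the same.
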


\begin{proof}
Let $\P \cA\in \P M$, and
$\overline{x}\in \P V(\cA)_i\setminus \P V(\cA)_{i+1}$.
Then
$$
\Phi(\P\cA)(\overline{x})=\overline{A_i x}.
$$
Let $U$ be a neighborhood of $\overline{A_i x}$ in $\P(V)$.
Put
$$
U_i:=
\{ \overline{C}\in \P\Hom(V(\cA)_i,V)\,|\, 
 \overline{Cx}\in U\}.
$$
Then $U_i$ is a neighborhood of $\overline{A_i}$.
Let $U_k$ $(k\neq i)$ be any neighborhood of $\overline{A_k}
\in \P\Hom(V(\cA)_k,V)$.

Let $\P\cB\in U_{\P\cA}(U_0,\ldots,U_m)$.
Then there exists $j$ such that
$\overline{{B_j}_{|V(\cA)_i}}\in U_i$.
Then $\overline{x}\in V(\cA)_i\subseteq V(\cB)_j$,
and $\overline{x}\notin V(\cB)_{j+1}$ as $\overline{B_j x}\in U$.
Hence
$$
\Phi(\P\cB)(\overline{x})=\overline{B_j x}\in U,
$$
and thus
$$
\Phi(U_{\P\cA}(U_0,\ldots,U_m))
\subseteq
U_{\Phi(\P\cA)}(\overline{x}, U).
$$
\end{proof}

\begin{remark}
By Proposition \ref{prop:PhiPisConti}, 
for any $\overline{x}\in \P(V)$,
the map
$$
\P M\ni \P\cA\mapsto \Phi(\P\cA)(\overline{x})\in \P(V)
$$
is continuous.
\end{remark}

\begin{proposition}
\label{prop:dense}
$PGL(V)$ is dense in $\P{M}$ and $\Phi(\P M)$.
\end{proposition}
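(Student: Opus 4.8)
The plan is to prove density in $\P M$ by exhibiting, for each $\P\cA\in\P M$, an explicit one‑parameter family in $PGL(V)$ converging to it, and then to deduce density in $\Phi(\P M)$ from the continuity of $\Phi$ (Proposition \ref{prop:PhiPisConti}). So fix $\P\cA=(\overline{A_0},\ldots,\overline{A_m})\in\P M$, with flag $V=V_0\supsetneq V_1\supsetneq\cdots\supsetneq V_{m+1}=0$ and $V_{i+1}=\Ker A_i$. Using the fixed Hermitian inner product, let $P_i\in\End(V)$ be the orthogonal projection onto $V_i$ (so $P_0=\mathrm{id}_V$ and $A_iP_i$ is $A_i$ regarded as an element of $\End(V)$), put $W_i:=V_i\cap V_{i+1}^{\perp}$ so that $V=W_0\oplus\cdots\oplus W_m$ and $V_i=W_i\oplus\cdots\oplus W_m$, and let $e(t)\in\End(V)$ act as $t^i\,\mathrm{id}$ on $W_i$. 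Set
$$
g_0:=\sum_{i=0}^{m}A_iP_i\in\End(V),\qquad g(t):=(g_0+t\,\mathrm{id}_V)\,e(t).
$$
Since $P_k|_{W_l}=0$ for $l<k$ and $A_kP_k|_{W_l}=0$ for $l>k$ (as $W_l\subseteq V_{k+1}=\Ker A_k$), one has $g_0|_{W_k}=A_k|_{W_k}$, hence $g(t)|_{W_k}=t^kA_k|_{W_k}$. Note that the naive limiting family $g_0\,e(t)=\sum_i t^iA_iP_i$ need not be invertible (e.g. in Example \ref{ex:MneqMtilde} it is $\left[\begin{smallmatrix}1&t\\0&0\end{smallmatrix}\right]$), which is exactly why the correction $t\,\mathrm{id}_V$ is inserted.

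Next I would check that $g(t)\in GL(V)$ for all small $t\neq0$ and that $\P g(t)\to\P\cA$. The first is a determinant computation: $\det g(t)=\det(g_0+t\,\mathrm{id}_V)\cdot\det e(t)$, where $\det e(t)=t^{\sum_i i\dim W_i}\neq0$ for $t\neq0$, and $\det(g_0+t\,\mathrm{id}_V)=\prod_k(t+\lambda_k)$ over the eigenvalues $\lambda_k$ of $g_0$, which is nonzero once $0<|t|<\min\{|\lambda_k| : \lambda_k\neq0\}$. For the convergence, fix $i$ and $v\in V_i$, write $v=\sum_{k\ge i}w_k$ with $w_k\in W_k$; then $e(t)v=\sum_{k\ge i}t^kw_k$ and
$$
t^{-i}g(t)(v)=A_i(w_i)+\sum_{k>i}t^{\,k-i}A_k(w_k)+\sum_{k\ge i}t^{\,k+1-i}w_k .
$$
As $t\to0$ this tends to $A_i(w_i)=A_i(v)$, since $v-w_i\in V_{i+1}=\Ker A_i$. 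Thus $t^{-i}g(t)|_{V_i}\to A_i|_{V_i}$ in $\Hom(V_i,V)$, so $\overline{g(t)|_{V_i}}\to\overline{A_i}$ in $\P\Hom(V_i,V)$ for every $i$. Since the one‑term sequence $(g(t))$ has $V(\P g(t))_0=V\supseteq V_i$, this means precisely that $\P g(t)\in U_{\P\cA}(U_0,\ldots,U_m)$ for all sufficiently small $t\neq0$, whatever neighborhoods $U_i$ of $\overline{A_i}$ are prescribed. Hence $\P\cA\in\overline{PGL(V)}$, so $PGL(V)$ is dense in $\P M$.

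Finally, since $\Phi$ is continuous (Proposition \ref{prop:PhiPisConti}) and $\Phi$ carries the copy $PGL(V)\subseteq\P M$ onto the copy $PGL(V)\subseteq\Phi(\P M)\subseteq\Map(\P(V))$, we get $\Phi(\P M)=\Phi(\overline{PGL(V)})\subseteq\overline{\Phi(PGL(V))}=\overline{PGL(V)}$, whence $PGL(V)$ is dense in $\Phi(\P M)$ as well. The one real obstacle in this argument is the construction in the first paragraph: the obvious candidate $\sum_i t^iA_iP_i$ degenerates and fails to be invertible, so one must find a correction term that simultaneously restores invertibility for small $t\neq0$ and is of high enough order in $t$ not to spoil the convergence $\overline{g(t)|_{V_i}}\to\overline{A_i}$ for each $i$; the term $t\,\mathrm{id}_V$ does both, as the determinant identity and the displayed expansion above show.
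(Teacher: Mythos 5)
Your argument is correct and follows essentially the same route as the paper: both approximate $\P\cA$ by an explicit polynomial curve $\epsilon\mapsto A(\epsilon)\in GL(V)$ whose restriction to each $V_i$, after division by $\epsilon^i$, converges in $\P\Hom(V_i,V)$ to $\overline{A_i}$, and then push density through the continuous map $\Phi$. Your factorization $g(t)=(g_0+t\,\mathrm{id}_V)e(t)$ makes the invertibility step fully explicit, whereas the paper merely asserts that suitable higher-order correction terms can be added; the only slip is the intermediate claim $g(t)|_{W_k}=t^kA_k|_{W_k}$, which should read $t^k(A_k+t\,\mathrm{id})|_{W_k}$, but your displayed expansion of $t^{-i}g(t)(v)$ already carries the extra term correctly, so nothing is affected.
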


\begin{proof}
By Proposition \ref{prop:PhiPisConti},
it is enough to prove that $PGL(V)$ is dense in $\P{M}$.

Let $\cA=(A_0, \ldots, A_m)\in \widetilde{M}$.
If necessary, add $A_{m+1}\epsilon^{m+1}+\cdots+A_l\epsilon^l$
for some $A_{m+1},\ldots, A_l\in \End(V)$
such that
$$
A(\epsilon):=\sum_{i=0}^l A_i \epsilon^i \in GL(V)
$$
for sufficiently small $\epsilon>0$.
We have
\begin{eqnarray*}
\overline{A(\epsilon)_{|K(\cA)_{k-1}}}
&=&
\overline{\epsilon^k({A_k}_{|K(\cA)_{k-1}}+\epsilon 
{A_{k+1}}_{|K(\cA)_{k-1}}+\cdots)}\\
&=&
\overline{{A_k}_{|K(\cA)_{k-1}}+\epsilon 
{A_{k+1}}_{|K(\cA)_{k-1}}+\cdots}.
\end{eqnarray*}
Hence
$\P A(\epsilon)\to \P\pi\cA\quad (\epsilon\to 0)$.
\end{proof}

\begin{proposition}
$PGL(V)$ is open in $\P{M}$ and $\Phi(\P M)$.
\end{proposition}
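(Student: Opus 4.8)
The plan is to treat the two claims separately, since they concern different topologies.

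\smallskip\noindent\emph{Openness in $\P M$.} Given $\P\cA\in PGL(V)$ (so $\cA=(A_0)$ is one-term with $A_0\in GL(V)$, $V(\cA)_0=V$, $V(\cA)_1=\Ker A_0=0$), I would exhibit a basic neighborhood contained in $PGL(V)$. Since $GL(V)$ is the complement of $\{\det=0\}$, $PGL(V)$ is open in $\P\End(V)=\P\Hom(V,V)$, so choose a neighborhood $U_0$ of $\overline{A_0}$ in $\P\Hom(V,V)$ with $U_0\subseteq PGL(V)$. If $\P\cB\in U_{\P\cA}(U_0)$, there is $j$ with $V(\cB)_j\supseteq V(\cA)_0=V$; since $V(\cB)_0=V$ and the $V(\cB)_j$ strictly decrease, $j=0$, hence $\overline{B_0}\in U_0\subseteq PGL(V)$, so $B_0\in GL(V)$, $V(\cB)_1=\Ker B_0=0$, and $\cB=(B_0)$ is one-term, i.e.\ $\P\cB\in PGL(V)$. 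Thus $U_{\P\cA}(U_0)\subseteq PGL(V)$. This step is routine.

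\smallskip\noindent\emph{Openness in $\Phi(\P M)$.} The heart of this is a geometric lemma: if $\P\cB=(\overline{B_0},\ldots,\overline{B_m})\in\P M\setminus PGL(V)$, then $m\ge1$, and putting $W_i:=\Im B_i$ and using $\Ker B_i=V(\cB)_{i+1}$ one gets $\dim W_i=\dim V(\cB)_i-\dim V(\cB)_{i+1}$, so that $\sum_{i=0}^m\dim W_i=\dim V(\cB)_0-\dim V(\cB)_{m+1}=n$ with every $W_i$ a proper subspace; moreover $\Phi(\P\cB)$ maps $\P(V)$ onto $\bigcup_{i=0}^m\P(W_i)$. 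From this I would show that such a $\Phi(\P\cB)$ cannot carry an $(n+1)$-point projective frame of $\P(V)$ to a projective frame: if $\overline{x_0},\ldots,\overline{x_n}$ is such a frame with images $\overline{y_k}:=\Phi(\P\cB)(\overline{x_k})$ again forming a frame, partition $\{0,\ldots,n\}$ by $T_i:=\{k:\overline{y_k}\in\P(W_i)\}$; the $|T_i|$ points $\{\overline{y_k}:k\in T_i\}$ are linearly independent (a subfamily of size $\le n$ of a projective frame) and lie in $W_i$, so $|T_i|\le\dim W_i$, and $n+1=\sum_i|T_i|\le\sum_i\dim W_i=n$, a contradiction. (The $(n+1)$st point is essential: when $n\ge2$ a non-invertible $\Phi(\P\cB)$ can carry a basis of $V$ to a basis.)

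\smallskip\noindent Granting the lemma, fix $f_0=\Phi(\P A_0)\in PGL(V)$ with $A_0\in GL(V)$, a basis $e_1,\ldots,e_n$ of $V$, and $e_0:=e_1+\cdots+e_n$; then $\overline{e_0},\ldots,\overline{e_n}$ is a projective frame, and so is its image $(\overline{A_0e_0},\ldots,\overline{A_0e_n})$ under $f_0$. Since the set of $(n+1)$-tuples forming a projective frame is open in $\P(V)^{n+1}$ (its complement is the zero locus of the relevant $n\times n$ minors), there are neighborhoods $U_k\ni f_0(\overline{e_k})$ with $U_0\times\cdots\times U_n$ inside that open set. Then for the weak neighborhood $U:=U_{f_0}(\overline{e_0},\ldots,\overline{e_n};U_0,\ldots,U_n)$ and any $\P\cB$ with $\Phi(\P\cB)\in U$, the tuple $(\Phi(\P\cB)(\overline{e_0}),\ldots,\Phi(\P\cB)(\overline{e_n}))$ is a projective frame, so the lemma forces $\P\cB\in PGL(V)$; hence $U\cap\Phi(\P M)\subseteq PGL(V)$, and $PGL(V)$ is open in $\Phi(\P M)$. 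The only real obstacle is the geometric lemma, in particular the sharp equality $\sum_i\dim W_i=n$ and the observation that $n+1$ rather than $n$ test points are needed; the rest is bookkeeping with the basic neighborhoods $U_{\P\cA}(\cdots)$ and with pointwise convergence. (Alternatively, once Theorem~\ref{thm:compact} is available, $\Phi$ is a closed map, so $\Phi(\P M\setminus PGL(V))=\Phi(\P M)\setminus PGL(V)$ is closed in $\Phi(\P M)$ --- but that relies on a later result.)
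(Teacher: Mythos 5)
Your proof is correct and follows essentially the same route as the paper: the first half is identical, and the second half rests on the same test configuration (an $(n+1)$-point projective frame, whose frame property is an open condition on tuples) and the same telescoping count $\sum_i\dim\Im(B_i)=\dim V(\cB)_0-\dim V(\cB)_{m+1}=n$, which you package as a clean standalone lemma where the paper runs the equivalent induction on the index sets $I_0,I_1,\ldots$. Two harmless slips in your lemma: the sets $T_i$ form a cover rather than a partition, since the images $W_i=\Im(B_i)$ may overlap (cf.\ Example~\ref{ex:MneqMtilde}), so one only has $n+1\le\sum_i|T_i|$ rather than equality; and if some $|T_i|=n+1$ the parenthetical justification of linear independence does not apply, though then any $n$ of those points already give $n$ independent vectors inside the proper subspace $W_i$, a contradiction on its own. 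In either case the decisive inequality $n+1\le\sum_i|T_i|\le\sum_i\dim W_i=n$ survives, so the argument stands.
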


\begin{proof}
Let $A\in GL(V)$.

Note that
$PGL(V)$
is open in $\P\End(V)$.
Put $U_0:=PGL(V)$.
Let $\P\cB\in U_{\P A}(U_0)$.
Then $\overline{B_0}\in U_0=PGL(V)$, and hence 
$\P\cB=\overline{B_0}\in PGL(V)$. 
Thus $U_{\P A}(U_0)=PGL(V)$ for $A\in GL(V)$.
Hence $PGL(V)$ is open in $\P{M}$.

Next take a basis $\e_1,\ldots,\e_n$ of $V$, and
regard $\End(V)=M_{n\times n}(\C)$.
Put $\e_{n+1}:=\e_1+\e_2+\cdots+\e_n$, and
$$
E_i:=[\e_1,\ldots, \widehat{\e_i}, \ldots, \e_{n+1}]\quad (1\leq i\leq n+1).
$$
Then $E_i\in GL_n(\C)$.
Since $E:=[\e_1,\e_2,\ldots,\e_{n+1}]$ belongs to the open set
$$
\{ D\in M_{n\times (n+1)}(\C)\,|\,
\text{all $n$-minors are nonzero}
\},
$$
we can take neighborhoods $\e_i\in W_i\subset V=\C^n$
$(i=1,2, \ldots, n+1)$
such that for each $i=1,2, \ldots, n+1$
$$
C_i:=[\c_1,\ldots, \widehat{\c_i}, \ldots, \c_{n+1}]\in GL(V)
$$
for all $\c_j\in W_j$ $(j=1,2, \ldots, n+1)$.

Then,
for each $i$,
vectors $\c_1,\ldots, \widehat{\c_i}, \ldots, \c_{n+1}$
are linearly independent, and in turn
 $A\c_1,\ldots, \widehat{A\c_i}, \ldots, A\c_{n+1}$
are linearly independent since $A\in GL(V)$.

Put
$$
U_i:=AW_i \quad (1\leq i\leq n+1).
$$
Then $\overline{U_i}$ is a neighborhood of $\overline{A \e_i}$.
Let 
$$
\Phi(\P\cB)\in U_{\Phi(\P A)}( \overline{\e_1},\ldots, \overline{\e_{n+1}}
; \overline{U_1}, \overline{U_2},\ldots, \overline{U_{n+1}}).
$$
We show that $\P\cB=\overline{B_0}\in PGL(V)$.
Put
$$
I_0:=\{ i\in [1, n+1] \,|\, \e_i\notin \Ker(B_0)\}.
$$
Since $\overline{B_0 \e_i}=\Phi(\P\cB)(\overline{\e_i})\in 
\overline{U_i}$
for $i\in I_0$,
vectors
$\overline{B_0 \e_i}$ $(i\in J\subseteq I_0; |J|\leq n)$ are 
linearly independent.
If $|I_0|\geq n$, then we see that $B_0\in GL_n(\C)$ and thus 
$\P\cB=\overline{B_0}\in PGL(V)$.

Suppose that $|I_0|< n$.
Since $\overline{B_0 \e_i}$ $(i\in I_0)$ are 
linearly independent,
$$
V(\cB)_1=\Ker B_0=\langle \e_i\, |\, i\in [1,n+1]\setminus I_0\rangle.
$$
We have $\dim V(\cB)_1=n-\dim \Im(B_0)=n-|I_0|$.
Put
$$
I_1:=\{ i\in [1, n+1]\setminus I_0 \,|\, \e_i\notin \Ker(B_1)\}.
$$
Then
$\overline{B_1 \e_i}=\Phi(\P\cB)(\overline{\e_i})\in 
\overline{U_i}$ $(i\in I_1)$ 
are linearly independent.
As before, we have
$$
V(\cB)_2=\Ker B_1=\langle \e_i\, |\, i\in [1,n+1]\setminus (I_0\sqcup I_1)\rangle,
$$
and $\dim V(\cB)_2=n-|I_0|-\dim \Im(B_1)=n-|I_0|-|I_1|$.
Repeat this process, and eventually
for some $l$ 
$$
I_l=[1,n+1]\setminus (I_0\sqcup I_1\sqcup\cdots\sqcup I_{l-1}),
$$
and $\dim V(\cB)_l=n-\sum_{j=0}^{l-1}|I_j|(=|I_l|-1)$.
Since $\overline{B_l \e_i}=\Phi(\P\cB)(\overline{\e_i})\in 
\overline{U_i}$ $(i\in I_l)$ 
are linearly independent,
this is a contradiction.
We have thus proved that
$$
U_{\Phi(\P A)}( \overline{\e_1},\ldots, \overline{\e_{n+1}}
; \overline{U_1}, \overline{U_2},\ldots, \overline{U_{n+1}})\cap \Phi(\P M)
\subseteq PGL(V).
$$
\end{proof}

\begin{proposition}
The spaces $\P M$ and $\Phi(\P M)$
 are separable.
\end{proposition}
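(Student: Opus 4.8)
The plan is to produce a single countable set that is simultaneously dense in $\P M$ and, via $\Phi$, dense in $\Phi(\P M)$, using the density and continuity results already established. First I would observe that $PGL(V)$ sits inside $\P\End(V)$, and that $\P\End(V)$ with its classical topology is second countable (it is $\P^{n^2-1}$ with the usual topology), hence separable; therefore the subspace $PGL(V)$ is separable in its usual topology, and I fix a countable subset $D\subseteq PGL(V)$ dense in $PGL(V)$ in the usual topology.

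Next I would transfer this to $\P M$. By Lemma \ref{PGL:RelativeUsual}, the relative topology on $PGL(V)$ induced from $\P M$ coincides with the usual topology, so $D$ is dense in $PGL(V)$ viewed as a subspace of $\P M$; equivalently, $PGL(V)\subseteq \overline{D}$, the closure being taken in $\P M$. By Proposition \ref{prop:dense}, $PGL(V)$ is dense in $\P M$, i.e. $\overline{PGL(V)}=\P M$. Density is transitive: from $PGL(V)\subseteq \overline{D}$ we get $\P M=\overline{PGL(V)}\subseteq \overline{\overline{D}}=\overline{D}$, so $D$ is dense in $\P M$, and $\P M$ is separable.

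Finally, for $\Phi(\P M)$ I would invoke the continuous surjection $\Phi\colon \P M\to \Phi(\P M)$ from Proposition \ref{prop:PhiPisConti}: the set $\Phi(D)$ is countable, and since a continuous map sends a dense set to a set dense in the image, $\overline{\Phi(D)}\supseteq \Phi(\overline{D})=\Phi(\P M)$, so $\Phi(D)$ is dense in $\Phi(\P M)$. Hence $\Phi(\P M)$ is separable as well. There is essentially no obstacle here; the only points needing care are the appeal to Lemma \ref{PGL:RelativeUsual} to identify the subspace topology on $PGL(V)$ with its usual one, the transitivity of density, and the fact that continuous images of dense sets are dense in the image. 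Note that $\P M$ is not asserted to be metrizable (only first countable, by Proposition \ref{prop:2ndCountability}), so separability does not automatically upgrade to second countability, but the proposition claims only separability.
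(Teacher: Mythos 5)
Your proof is correct and follows essentially the same route as the paper: the paper fixes the explicit countable dense set $PGL_n(\Q(\sqrt{-1}))\subseteq PGL(V)$, then invokes Lemma \ref{PGL:RelativeUsual}, Proposition \ref{prop:dense}, and Proposition \ref{prop:PhiPisConti} exactly as you do. Your version merely abstracts the choice of the countable dense subset and spells out the transitivity-of-density and continuous-image steps a bit more explicitly.
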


\begin{proof}
We fix an orthonormal basis of $V$, 
we regard $\End (V)=M_{n\times n}(\C)$.
Clearly $PGL_n(\Q(\sqrt{-1}))$ is dense in $PGL(V)$
in the usual topology.
By Lemma \ref{PGL:RelativeUsual}, it is dense in $PGL(V)$
in the relative topology induced from $\P M$.
Hence by Proposition \ref{prop:dense},
$PGL_n(\Q(\sqrt{-1}))$ is dense in $\P M$,
and hence $\P M$ is separable.

By Proposition \ref{prop:PhiPisConti},
$\Phi(\P M)$ is also separable.
\end{proof}

\section{Nets}

In this section, we recall the concept of nets following
\cite{Nagata}.

A set $\Delta$ with
a relation $\preceq$ satisfying
the following is called a {\it directed set}:
\begin{enumerate}
\item
$a\preceq b$ and $b\preceq c$ implies $a\preceq c$.
\item
$a\preceq a$.
\item
for every two elements $a, b\in \Delta$, there exists an element $c\in
\Delta$ such that $a\preceq c$ and $b\preceq c$.
\end{enumerate}

\begin{definition}
Let $\Delta$ be a non-empty directed set and X a topological space.
A {\it net} on $\Delta$ is a map $\phi: \Delta\to X$, which may be regarded as
a subset $\phi(\Delta):=\{ \phi(\delta)\in X\,|\, \delta\in \Delta\}$
of $X$.

A net $\phi$ of $X$ on $\Delta$ is said to be {\it maximal}
(or {\it universal} or an {\it ultranet}),
if for every subset $S$ of $X$ there exists $\delta_1\in \Delta$
such that $\phi(\delta)\in S$ for all $\delta\succeq \delta_1$
({\it eventually in $S$}),
or there exists $\delta_2\in \Delta$
such that $\phi(\delta)\in X\setminus S$ for all $\delta\succeq \delta_2$.

A net $\phi$ of $X$ on $\Delta$ is said to {\it converge} to a point $x\in X$
(denoted by $\lim_{\delta\in \Delta}\phi(\delta)= x$) 
if for every neighborhood $U$ of $x$
there exists $\delta_U\in \Delta$
such that $\phi(\delta)\in U$ for all $\delta\succeq \delta_U$.
\end{definition}

For the following proposition,
see for example 
\cite[Corollary I$\!$I$\!$I.3.D]{Nagata}.

\begin{proposition}
\label{prop:CompactNet}
A topological space $X$ is compact if and only if
every maximal net of $X$ converges.
\end{proposition}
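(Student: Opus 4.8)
The final statement is Proposition~\ref{prop:CompactNet}, which is quoted from Nagata's book, so strictly speaking the "proof" the author gives is a citation. Here is how I would organize a self-contained argument, which is what a careful reader would want to reconstruct.

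\medskip

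The plan is to prove both directions via the contrapositive, using the characterization of compactness through the finite intersection property (FIP) of closed sets. For the forward direction, suppose $X$ is compact and let $\phi:\Delta\to X$ be a maximal net that does not converge. First I would pass to the family of closure-sets $\{\overline{\phi([\delta,\infty))}\}_{\delta\in\Delta}$, where $[\delta,\infty):=\{\delta'\in\Delta\,|\,\delta'\succeq\delta\}$; directedness of $\Delta$ makes this family closed under finite intersections (the tail past an upper bound of finitely many indices sits in each), so by compactness there is a point $x\in\bigcap_\delta\overline{\phi([\delta,\infty))}$, i.e.\ $x$ is a cluster point of $\phi$. The key step is then to invoke maximality: for every neighbourhood $U$ of $x$, the net is eventually in $U$ or eventually in $X\setminus U$; being a cluster point rules out the latter, so $\phi$ is eventually in every neighbourhood of $x$, i.e.\ $\phi$ converges to $x$ — contradiction.

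\medskip

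For the converse, suppose every maximal net converges but $X$ is not compact. Then there is a family $\mathcal{F}=\{F_\alpha\}$ of closed sets with the FIP but empty intersection. I would form the directed set $\Delta$ whose elements are the nonempty finite intersections $F=F_{\alpha_1}\cap\cdots\cap F_{\alpha_k}$, ordered by reverse inclusion, and choose (using the axiom of choice) a point $\phi(F)\in F$ for each such $F$; this $\phi$ is a net. The standard lemma that every net admits a maximal subnet then produces a maximal subnet $\psi$, which by hypothesis converges to some $x\in X$. One checks $\psi$ is eventually in each $F_\alpha$ (because $F_\alpha$ itself is one of the index-sets and $\psi$ refines $\phi$), hence its limit $x$ lies in the closed set $F_\alpha$ for every $\alpha$, contradicting $\bigcap_\alpha F_\alpha=\emptyset$.

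\medskip

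The main obstacle — and the reason the author simply cites \cite{Nagata} — is that a fully honest proof of the converse needs the existence of maximal subnets, whose proof rests on an ultrafilter (Zorn's lemma) argument: one takes an ultrafilter refining the filter of tails of $\phi$ and reindexes. That lemma is not stated in the excerpt, so rather than develop the ultrafilter machinery I would, as the author does, defer to \cite[Corollary III.3.D]{Nagata}. If a proof were required in full, the cleanest route is to go through ultrafilters throughout: identify "maximal net" with "net whose tail filter is an ultrafilter," and then both directions of the equivalence reduce to the familiar statement that $X$ is compact iff every ultrafilter on $X$ converges. Since the rest of the paper uses this proposition only as a black box to establish Theorem~\ref{thm:compact}, citing it is entirely appropriate here.
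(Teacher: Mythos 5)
Your reconstruction is correct, and it matches the paper exactly in the only sense available: the paper gives no proof of this proposition at all, citing \cite[Corollary III.3.D]{Nagata}, just as you ultimately do. Your supplementary sketch (cluster point via the finite intersection property of tail closures plus maximality for the forward direction; universal subnets of a net chosen from the finite intersections for the converse) is the standard argument and is sound, with the reliance on the existence of maximal subnets correctly flagged as the one ingredient that genuinely needs the ultrafilter lemma.
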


\section{Compactness}

Let $W$ be a subspace of $V$.
Let $\{ A^{(\delta)}\,|\, \delta\in \Delta\}$ 
be a net of $\Hom(W,V)\setminus\{ 0\}$
such that $\{ \overline{A^{(\delta)}}\}$ is a maximal net of 
$\P\Hom(W, V)$.
Since $\P\Hom(W, V)$ is compact,
$\{ \overline{A^{(\delta)}}\}$ converges
by Proposition \ref{prop:CompactNet}.
Hence there exists
$A_0\in \Hom(W, V)\setminus\{ 0\}$ such that
$$
\overline{A_0}=\lim_{\delta\in \Delta}\overline{A^{(\delta)}}.
$$
Let $V_1:=\Ker (A_0)$.
Since $\{ \overline{A^{(\delta)}}\}$ is maximal,
it is eventually in 
$$
\P\Hom(W, V)_{V_1}=
\{ \overline{B}\in \P\Hom(W, V)\,|\, B_{|V_1}\neq 0\}
$$ or
in its complement.

Suppose that
there exists $\delta_1$ such that
$$
\overline{A^{(\delta)}}\in \P\Hom(W, V)_{V_1}
\quad (\forall \delta\succeq \delta_1).
$$
Set for $\delta\succeq\delta_1$
$$
\overline{A^{(\delta)}_1}:=
\overline{A^{(\delta)}_{|V_1}}\in \P\Hom(V_1, V).
$$
Then we claim that
$\{ \overline{A^{(\delta)}_1}\,|\, \delta\succeq\delta_1\}$
is a maximal net of $\P\Hom(V_1, V)$.

Let
$$
\rho^W_{V_1}: \P\Hom(W, V)_{V_1}
\to \P\Hom(V_1, V)
$$
be the restriction map.
Let $S$ be a subset of $\P\Hom(V_1, V)$.
Then
$\overline{A^{(\delta)}}$ is eventually in $(\rho^W_{V_1})^{-1}(S)$
or in its complement.
Hence
$\overline{A^{(\delta)}_1}$ is eventually in $S$
or in its complement.
We have thus proved that 
$\{ \overline{A^{(\delta)}_1}\,|\, \delta\succeq\delta_1\}$
is maximal.

Since $\P\Hom(V_1, V)$ is compact,
there exists $A_1\in \Hom(V_1, V)$
such that
$\overline{A_1}=\lim_{\delta\succeq\delta_1}\overline{A^{(\delta)}_1}$.
Let $V_2:=\Ker A_1$.
Since $\{ \overline{A^{(\delta)}_1}\,|\, \delta\succeq\delta_1\}$
is maximal,
it is eventually in 
$\P\Hom(V_1, V)_{V_2}$
or in its complement.
Suppose that
there exists $\delta_2$ such that
$$
\overline{A^{(\delta)}_1}\in
\P\Hom(V_1,V)_{V_2}
\quad (\forall \delta\succeq \delta_2).
$$
Set for $\delta\succeq\delta_2$
$$
\overline{A^{(\delta)}_2}:=
\overline{A^{(\delta)}_{|V_2}}\in \P\Hom(V_2, V).
$$
Then as before
$\{ \overline{A^{(\delta)}_2}\,|\, \delta\succeq\delta_2\}$
is a maximal net of $\P\Hom(V_2, V)$.

Repeat this process, and we obtain a sequence
$\{ A_i\in \Hom(V_i, V)\,|\, i=0,1,2,\ldots\}$ 
with $V_i=\Ker (A_{i-1})$ and $V_0=W$
satisfying
\begin{equation}
\label{eqn:Asymptotic}
\overline{A_i}=
\lim_{\delta\succeq\delta_i}\overline{{A^{(\delta)}}_{|V_i}}.
\end{equation}

Since the dimension of $V_i$ is strictly decreasing,
eventually there exists $k$ such that
for some $\delta_{k}\in \Delta$
$$
{A_{k-1}^{(\delta)}}_{|V_{k}}=0 
\quad \text{for $\delta\succeq\delta_{k}$}.
$$

We summarize the above arguments in the following lemma.

\begin{lemma}
\label{thm:expansion}
Let $W$ be a subspace of $V$.
Let $\{ \overline{A^{(\delta)}}\}$ be a maximal net of 
$\P\Hom(W,V)$.
Then there exist $k\in \N$, 
subspaces
$W=V_0\supsetneq V_1\supsetneq V_2\supsetneq 
\cdots \supsetneq V_{k}$,
$\delta_1\preceq \delta_2\preceq \cdots\preceq \delta_k$
in $\Delta$,
and $A_i\in \Hom(V_i,V)$ $(i=0,1,\ldots, k-1)$
satisfying
\begin{enumerate}
\item[\rm (1)]
$\overline{A_i}=
\lim_{\delta\succeq\delta_i}\overline{{A^{(\delta)}}_{|V_i}}$
 for $i\leq k-1$,
\item[\rm (2)]
$V_{k}\subseteq \Ker (A^{(\delta)})$ for $\delta\succeq\delta_k$.
\end{enumerate}
\end{lemma}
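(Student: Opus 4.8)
The plan is to recast the iterative construction preceding the statement as a finite induction, isolating its one nontrivial ingredient as a preliminary observation: \emph{restriction preserves maximality}. Precisely, if $\phi\colon\Delta\to X$ is a maximal net, $X'\subseteq X$ is a subset in which $\phi$ is eventually, say $\phi(\delta)\in X'$ for all $\delta\succeq\delta_0$, and $f\colon X'\to Y$ is any map, then $\delta\mapsto f(\phi(\delta))$ is a maximal net on $Y$ indexed by the directed tail $\{\delta\in\Delta\,|\,\delta\succeq\delta_0\}$. This is immediate from the definitions: for $S\subseteq Y$, maximality of $\phi$ makes $\phi$ eventually in $f^{-1}(S)$ or eventually in $X\setminus f^{-1}(S)$; combining this with ``$\phi$ eventually in $X'$'' shows that $f\circ\phi$ is eventually in $S$ or eventually in $Y\setminus S$. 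I would use this only with $f=\rho^{V_i}_{V_{i+1}}$ on $X'=\P\Hom(V_i,V)_{V_{i+1}}$.

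Next I would run the recursion. Suppose we are given a maximal net $\{\overline{A^{(\delta)}}\}$ on the compact space $\P\Hom(V_i,V)$ (over a suitable directed tail of $\Delta$); for $i=0$ this is the hypothesis, with $V_0=W$. By Proposition \ref{prop:CompactNet} the net converges, so there is $A_i\in\Hom(V_i,V)\setminus\{0\}$ with $\overline{A_i}=\lim\overline{A^{(\delta)}}$; put $V_{i+1}:=\Ker A_i$, so that $V_{i+1}\subsetneq V_i$ because $A_i\neq 0$. Since the net is maximal, it is eventually in $\P\Hom(V_i,V)_{V_{i+1}}$ or eventually in its complement. In the latter case one has $\overline{A^{(\delta)}}\notin\P\Hom(V_i,V)_{V_{i+1}}$ eventually, i.e.\ ${A^{(\delta)}}_{|V_{i+1}}=0$, i.e.\ $V_{i+1}\subseteq\Ker A^{(\delta)}$, for $\delta$ large, and we stop. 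In the former case the preliminary observation, applied to the restriction map $\rho^{V_i}_{V_{i+1}}$, shows that $\{\overline{{A^{(\delta)}}_{|V_{i+1}}}\}$ is a maximal net on $\P\Hom(V_{i+1},V)$, and we repeat the step with $i+1$ in place of $i$. Note that if $V_{i+1}=0$ then $\P\Hom(V_i,V)_{V_{i+1}}=\emptyset$, so the latter (terminating) case is then forced.

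It remains to note termination and to assemble the output. Along the recursion $\dim V_0>\dim V_1>\dim V_2>\cdots$ strictly decreases, so the non-terminating case can occur only finitely often; hence the terminating case occurs, say at stage $i=k-1$, and we obtain $A_0,\dots,A_{k-1}$ together with the strictly decreasing chain $W=V_0\supsetneq V_1\supsetneq\cdots\supsetneq V_k$ with $V_i=\Ker A_{i-1}$. Property (1) is exactly \eqref{eqn:Asymptotic}, which holds by the choice of each $A_i$ as the limit of the corresponding restricted net; property (2) is the conclusion of the terminating case. Finally, the thresholds $\delta_1,\dots,\delta_k$ produced at the successive stages already satisfy $\delta_1\preceq\cdots\preceq\delta_k$, since $\delta_{i+1}$ is obtained as a threshold for a net indexed by the tail $\{\delta\succeq\delta_i\}$ and so obeys $\delta_{i+1}\succeq\delta_i$; if one prefers, replace each $\delta_{i+1}$ by an upper bound of $\delta_i$ and $\delta_{i+1}$, which exists by directedness.

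I do not expect a real obstacle here: the entire content is the preliminary observation that pushing a maximal net forward under $\rho^{V_i}_{V_{i+1}}$ along a tail keeps it maximal, together with the remark that the degenerate case $V_i=0$ is automatically absorbed into the terminating case, so that the strictly decreasing dimension sequence alone forces the process to stop. The only thing requiring care is to keep track of the directed tails and of the thresholds $\delta_i$ consistently.
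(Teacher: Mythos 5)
Your proposal is correct and follows essentially the same route as the paper: convergence of the maximal net in the compact space $\P\Hom(V_i,V)$, the dichotomy coming from maximality applied to $\P\Hom(V_i,V)_{V_{i+1}}$, the observation that pushing the tail forward under $\rho^{V_i}_{V_{i+1}}$ preserves maximality, and termination by strictly decreasing dimension. Your explicit isolation of the ``restriction preserves maximality'' step (including the need to intersect with the tail where the net lies in $X'$) is exactly the argument the paper gives via $(\rho^{W}_{V_1})^{-1}(S)$.
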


\begin{theorem}
\label{thm:Sequence}
$\P {M}$ is compact.
\end{theorem}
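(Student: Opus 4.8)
By Proposition \ref{prop:CompactNet}, it suffices to show that every maximal net of $\P M$ converges. So let $\{\P\cB^{(\delta)}\,|\,\delta\in\Delta\}$ be a maximal net of $\P M$, where $\cB^{(\delta)}=(B^{(\delta)}_0, B^{(\delta)}_1,\ldots)\in M$. The plan is to apply Lemma \ref{thm:expansion} repeatedly to build a limit element $\P\cA\in\P M$ layer by layer, and then to check convergence in the topology of Lemma \ref{lem:TopPM}.

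First I would extract the zeroth layer. The net $\{\overline{B^{(\delta)}_0}\}$ lives in $\P\End(V)=\P\Hom(V,V)$; since images of maximal nets under arbitrary maps are maximal, $\{\overline{B^{(\delta)}_0}\}$ is a maximal net of $\P\End(V)$. Apply Lemma \ref{thm:expansion} with $W=V$: I obtain $k_0\in\N$, a chain $V=V_0\supsetneq V_1\supsetneq\cdots\supsetneq V_{k_0}$, indices $\delta_1\preceq\cdots\preceq\delta_{k_0}$, and maps $A_0,\ldots,A_{k_0-1}$ with $\overline{A_i}=\lim_{\delta\succeq\delta_i}\overline{{B^{(\delta)}_0}_{|V_i}}$ and $V_{k_0}\subseteq\Ker(B^{(\delta)}_0)$ eventually. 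Now I repeat the argument with $W=V_{k_0}$ applied to the net $\{\overline{{B^{(\delta)}_1}_{|V_{k_0}}}\}$ (eventually defined and nonzero on $V_{k_0}$, or else $V_{k_0}$ is killed by $B^{(\delta)}_1$ too and I pass to $B^{(\delta)}_2$, etc.); here I must first argue that since $\cB^{(\delta)}\in M$ and $V_{k_0}\subseteq\Ker(B^{(\delta)}_0)$ eventually, the restriction of the next nonzero $B^{(\delta)}_s$ to $V_{k_0}$ is nonzero, using $\cap_k\Ker(B^{(\delta)}_k)=0$. This produces the next block of $A_i$'s. Since $\dim V$ is finite and the subspaces strictly decrease, after finitely many rounds I exhaust $V$ and obtain a single finite sequence $\cA=(A_0,A_1,\ldots,A_m)$ with $V(\cA)_i=\Ker(A_{i-1})$, $V(\cA)_{m+1}=0$, i.e. $\P\cA\in\P M$, together with a cofinal index $\delta_*\in\Delta$ such that beyond $\delta_*$ each $\overline{{B^{(\delta)}_{j(i)}}_{|V(\cA)_i}}$ converges to $\overline{A_i}$, where $j(i)$ is the (eventual) position of the relevant block.

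Next I would verify $\lim_\delta\P\cB^{(\delta)}=\P\cA$. Take a basic neighborhood $U_{\P\cA}(U_0,\ldots,U_m)$ with $U_i$ a neighborhood of $\overline{A_i}$ in $\P\Hom(V(\cA)_i,V)$. For each $i$, the relation $\overline{A_i}=\lim_{\delta\succeq\delta_i'}\overline{{B^{(\delta)}_{j(i)}}_{|V(\cA)_i}}$ (with $j(i)$ the maximal index $j$ with $V(\cB^{(\delta)})_j\supseteq V(\cA)_i$, consistent with Lemma \ref{rem:Uniqueness}) gives some $\delta_i''$ past which this restriction lies in $U_i$; since $\{0,\ldots,m\}$ is finite and $\Delta$ is directed, I can find one $\delta'\in\Delta$ dominating all the $\delta_i''$, and then $\P\cB^{(\delta)}\in U_{\P\cA}(U_0,\ldots,U_m)$ for all $\delta\succeq\delta'$. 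Hence the net converges, and $\P M$ is compact.

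The main obstacle is bookkeeping in the layered construction: I must carefully track, across rounds, which component $B^{(\delta)}_j$ of $\cB^{(\delta)}$ restricts nontrivially to the current subspace $V(\cA)_i$, confirm this stabilizes to a fixed $j=j(i)$ on a cofinal set (using maximality to avoid oscillation, and using the defining conditions of $M$ — in particular $\bigcap_k\Ker B^{(\delta)}_k=0$ — to guarantee non-vanishing), and verify that the $V(\cA)_i$ produced really do form a strictly decreasing flag terminating at $0$, so that $\P\cA$ genuinely lies in $\P M$. Once this stabilization is nailed down, matching it against the uniqueness statement of Lemma \ref{rem:Uniqueness} makes the convergence check routine.
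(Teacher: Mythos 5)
Your proposal is correct and follows essentially the same route as the paper: extract the limit layer by layer via Lemma \ref{thm:expansion}, use maximality of the net to pin down which component $B^{(\delta)}_{j}$ restricts nontrivially to each new subspace, and then check convergence against the basic neighborhoods of Lemma \ref{lem:TopPM}. The stabilization of $j(i)$ that you flag as the main obstacle is handled in the paper exactly as you indicate — by repeatedly testing, via maximality, whether the net is eventually in the set where $A_j$ vanishes on $V_{k_0}$ for $j<j_1$ and $A_{j_1}|_{V_{k_0}}\neq 0$, which must succeed for some $j_1$ since $\bigcap_k\Ker B^{(\delta)}_k=0$.
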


\begin{proof}
Let 
$$
\{ \P\cB^{(\delta)}=(\overline{B_0^{(\delta)}}, 
\overline{B_1^{(\delta)}}, \ldots)\}
$$ 
be a maximal net in $\P{M}$.

Then we claim that $\{ \overline{B_0^{(\delta)}}\}$
is a maximal net of $\P\End(V)$.

Let $S$ be any subset of $\P\End(V)$.
Since
$\{ \P\cB^{(\delta)}\}$ is maximal,
it is eventually
in 
$$
\{ \P\cA=(\overline{A_0}, \overline{A_1},\ldots)
\in \P M\,|\, \overline{A_0}\in S\}
$$
or in its complement.
This means that
$\{ \overline{B_0^{(\delta)}}\}$
is eventually in $S$ or in its complement.
We have thus proved that $\{ \overline{B_0^{(\delta)}}\}$
is a maximal net of $\P\End(V)$.

Hence by Lemma \ref{thm:expansion}
there exist $k_0\in \N$, 
subspaces
$V=V_0\supsetneq V_1\supsetneq V_2\supsetneq 
\cdots \supsetneq V_{k_0}$,
$\delta_1\preceq \delta_2\preceq \cdots\preceq \delta_{k_0}$
in $\Delta$,
and $B_i\in \Hom(V_i,V)$ $(i=0,1,\ldots, k_0-1)$
satisfying
\begin{enumerate}
\item[\rm (1)]
$\overline{B_i}=
\lim_{\delta\succeq\delta_i}\overline{{{B_0}^{(\delta)}}_{|V_i}}$
 for $i\leq k_0-1$,
\item[\rm (2)]
$V_{k_0}\subseteq \Ker ({B_0}^{(\delta)})$ for 
$\delta\succeq\delta_{k_0}$.
\end{enumerate}

Suppose that $V_{k_0}\neq 0$.
For $\delta\succeq\delta_{k_0}$, since
$V_{k_0}\subseteq \Ker(B_0^{(\delta)})$,
${B_1^{(\delta)}}_{|V_{k_0}}$ is defined.
We know that
$\{ \P\cB^{(\delta)}\}$ is eventually
in
$$
\{ \P\cA=(\overline{A_0}, \overline{A_1},\ldots)
\in \P M\,|\, V_{k_0}\subseteq \Ker(A_0),\,
{A_1}_{|V_{k_0}}\neq 0\}
$$
or in its complement.
Suppose that $\{ \P\cB^{(\delta)}\}$ is eventually
in the complement.
Then, since $V_{k_0}\subseteq \Ker ({B_0}^{(\delta)})$ for 
$\delta\succeq\delta_{k_0}$,
$\{ \P\cB^{(\delta)}\}$ is eventually
in
$\{ \P\cA=(\overline{A_0}, \overline{A_1},\ldots)
\in \P M\,|\, V_{k_0}\subseteq \Ker(A_1)\subseteq \Ker(A_0)\}$.
Then we know that
$\{ \P\cB^{(\delta)}\}$ is eventually
in
$$
\{ \P\cA=(\overline{A_0}, \overline{A_1},\ldots)
\in \P M\,|\, V_{k_0}\subseteq \Ker(A_1)\subseteq \Ker(A_0),\,
{A_2}_{|V_{k_0}}\neq 0\}
$$
or in its complement.
Repeat this, and we see
that there exists $j_1$ such that
$\{ \P\cB^{(\delta)}\}$ is eventually
in
$$
\{ \P\cA=(\overline{A_0}, \overline{A_1},\ldots)
\in \P M\,|\, V_{k_0}\subseteq \Ker(A_{j})\,(j<j_1),\quad
{A_{j_1}}_{|V_{k_0}}\neq 0\}.
$$
Retake $\delta_{k_0}$ such that,
for all $\delta\succeq\delta_{k_0}$,
 $\P\cB^{(\delta)}$ belong to 
$$
\{ \P\cA=(\overline{A_0}, \overline{A_1},\ldots)
\in \P M\,|\, V_{k_0}\subseteq \Ker(A_{j})\,(j<j_1),\quad
{A_{j_1}}_{|V_{k_0}}\neq 0\}.
$$
Then we claim that 
$\{ \overline{{B_{j_1}}^{(\delta)}_{|V_{k_0}}}\}$
is a maximal net of $\P\Hom(V_{k_0}, V)$.
For every $S\subseteq \P\Hom(V_{k_0}, V)$,
we know that
$\{\P\cB^{(\delta)}\}$ is eventually in
$$
\{ \P\cA\in \P M\,|\,
V_{k_0}\subseteq \Ker(A_{j})\,(j<j_1),\quad
\overline{{A_{j_1}}_{|V_{k_0}}}\in S\}
$$
or in its complement.
This means that
$\{ \overline{{B_{j_1}}^{(\delta)}_{|V_{k_0}}}\}$
is eventually in $S$ or in its complement.
Hence $\{ \overline{{B_{j_1}}^{(\delta)}_{|V_{k_0}}}\}$
is a maximal net of $\P\Hom(V_{k_0}, V)$.
Then apply Lemma \ref{thm:expansion}
to $\{ \overline{{B_{j_1}}^{(\delta)}_{|V_{k_0}}}\}$
to see that
there exist $k_1\in \N$, 
subspaces
$V_{k_0}\supsetneq V_{k_0+1}\supsetneq 
\cdots \supsetneq V_{k_1}$,
$(\delta_{k_0}\preceq) \delta_{k_0+1}\preceq \cdots
\preceq \delta_{k_1}$
in $\Delta$,
and $B_i\in \Hom(V_i,V)$ $(i=k_0,k_0+1,\ldots, k_1-1)$
satisfying
\begin{enumerate}
\item[\rm (1)]
$\overline{B_i}=
\lim_{\delta\succeq\delta_i}\overline{{{B_{j_1}}^{(\delta)}}_{|V_i}}$
 for $k_0\leq i< k_1$,
\item[\rm (2)]
$V_{k_1}\subseteq \Ker ({B_{j_1}}^{(\delta)})$ for 
$\delta\succeq\delta_{k_1}$.
\end{enumerate}

Repeat this process.
We eventually have $V_{k_p}=0$.

Suppose that $k_{s-1}\leq k< k_s$.
Then
$$
\lim_{\delta\succeq \delta_k}
\overline{{B_{j_s}^{(\delta)}}_{|V_k}}=\overline{B_k},
$$
where $j_0=0$.
Hence 
$$
\lim_{\delta\succeq\delta_{k_p}}\P{\cB^{(\delta)}}
=( \overline{B_0}, \overline{B_1},
\ldots, \overline{B_{k_p-1}})\in \P {M}.
$$
Hence by Proposition \ref{prop:CompactNet}
$\P M$ is compact.
\end{proof}

\begin{corollary}
\label{thm:compact}
$\Phi\P M$ is compact.
\end{corollary}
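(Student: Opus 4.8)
The plan is to read off the statement as an immediate consequence of the two results already established in Sections~3 and~5, namely that $\P M$ is compact (Theorem~\ref{thm:Sequence}) and that $\Phi\colon \P M\to \Map(\P(V))$ is continuous (Proposition~\ref{prop:PhiPisConti}). The only thing to invoke beyond these is the elementary topological fact that the image of a compact space under a continuous map is compact.

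Concretely, I would argue as follows. First, recall from Proposition~\ref{prop:PhiPisConti} that $\Phi$ is continuous when $\P M$ carries the topology of Lemma~\ref{lem:TopPM} and $\Map(\P(V))$ carries the weak topology. Next, observe that $\Phi\P M$ is by definition the set-theoretic image $\Phi(\P M)$, and it inherits the subspace topology from $\Map(\P(V))$; with respect to this subspace topology the corestriction $\Phi\colon \P M\to \Phi\P M$ is still continuous and now surjective. Finally, apply Theorem~\ref{thm:Sequence}: since $\P M$ is compact and $\Phi\P M$ is the continuous image of $\P M$, the space $\Phi\P M$ is compact. (If one prefers the net formulation used throughout Section~5, one can instead take a maximal net $\{\Phi(\P\cB^{(\delta)})\}$ in $\Phi\P M$, lift it to the maximal net $\{\P\cB^{(\delta)}\}$ in $\P M$, use Theorem~\ref{thm:Sequence} to get $\lim_\delta \P\cB^{(\delta)}=\P\cB$ for some $\P\cB\in\P M$, and then conclude $\lim_\delta \Phi(\P\cB^{(\delta)})=\Phi(\P\cB)$ by continuity, so that every maximal net of $\Phi\P M$ converges and Proposition~\ref{prop:CompactNet} applies.)

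There is essentially no obstacle here: the corollary is a formal consequence of Theorem~\ref{thm:Sequence} and Proposition~\ref{prop:PhiPisConti}. The only point requiring a word of care is that $\Map(\P(V))$ with the weak topology is Hausdorff (noted just before Proposition~\ref{prop:PhiPisConti}), so $\Phi\P M$ is in fact a compact Hausdorff space; but even this is not needed for the bare compactness assertion. I would therefore keep the proof to one or two sentences.
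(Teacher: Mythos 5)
Your proof is correct and is exactly the paper's argument: $\Phi\P M$ is the continuous image (Proposition \ref{prop:PhiPisConti}) of the compact space $\P M$ (Theorem \ref{thm:Sequence}), hence compact. The paper states this in one sentence; your elaboration adds nothing that needs changing.
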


\begin{proof}
This follows from Proposition \ref{prop:PhiPisConti}
and 
Theorem \ref{thm:Sequence}.
\end{proof}

\begin{example}
Let 
$$
A_0=
\begin{bmatrix}
1 & 0 & 0\\
0 & 0 & 0\\
0 & 0 & 0\\
\end{bmatrix},
\quad
A_1=
\begin{bmatrix}
0 & 1 & 1\\
0 & 1 & -1\\
0 & 1 & -1\\
\end{bmatrix},
$$
and let $B$ be any $3\times 3$ matrix.
Put $A_\epsilon:=A_0+\epsilon A_1$.
Then
$(A_\epsilon, B)\in \widetilde{M}$
if $B(-2\epsilon\e_1+\e_2+\e_3)\neq\0$.

By the algorithm before Lemma \ref{thm:expansion}, 
we obtain $(\overline{A_0},\overline{{A_1}_{|V_1}})$ from
$\overline{A_\epsilon}$ $(\epsilon\to 0)$
with $V_1=\Ker(A_0)=\C\e_2+\C\e_3$.
Since $V_2=\Ker({A_1}_{|V_1})=0$,
the algorithm in the proof of Theorem \ref{thm:Sequence} ends here.
Hence
$$
\lim_{\epsilon\to 0}\P\pi(A_\epsilon, B)
=(\overline{A_0},\overline{{A_1}_{|V_1}}).
$$
\end{example}

\section{Products}

In order to define a monoid structure on $M, \P M$
compatible with the one in $\Map(\P(V))$,
we first define a monoid structure on $\widetilde{M}'$,
an extension of $\widetilde{M}$.

Set
$$
\widetilde{M}'
:=
\left\{
(A_0, A_1, \ldots, A_m)\,|\,
\begin{array}{ll}
m=0,1,2,\ldots & \\
A_i\in \End(V) & (0\leq i\leq m)\\
\cap_i \Ker(A_i)=0
\end{array}
\right\}.
$$


We define a product in $\widetilde{M}'$.
For $\cA=(A_0, A_1, A_2,\ldots, A_{l-1})$ and 
$\cB=(B_0, B_1, B_2,\ldots, B_{m-1})\in \widetilde{M}'$,
define $\cA\cB$ by
$$
\cA\cB=(A_0B_0, A_1B_0, \ldots, A_{l-1} B_0, A_0B_1,\ldots, A_{l-1} B_1, A_0B_2,\ldots).
$$
It may be convenient to use the polynomial notation;
for $\cA=(A_0, A_1,$ $\ldots, A_{l-1})$, write
$\cA(t)=\sum_{i=0}^{l-1}A_i t^i$.
Then
$$
(\cA\cB)(t)=\sum_{i=0}^{l-1}\sum_{j=0}^{m-1}A_iB_j t^{lj+i}.
$$

\begin{example}
Let $\cA=(A_0, A_1, A_2), \cB=(B_0,B_1)$.
Then
$$
\cA\cB=(A_0B_0, A_1B_0, A_2B_0, A_0B_1, A_1B_1, A_2B_1).
$$
\end{example}

\begin{lemma}
\label{lem:Product}
For $\cA, \cB\in \widetilde{M}'$,
we have
$\cA\cB\in \widetilde{M}'$.
\end{lemma}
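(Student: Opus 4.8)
The plan is to verify the two defining conditions of $\widetilde{M}'$ for the sequence $\cA\cB=(A_0B_0,A_1B_0,\ldots,A_{l-1}B_0,A_0B_1,\ldots)$. The first condition, that every entry lies in $\End(V)$, is immediate since each $A_iB_j$ is a composition of endomorphisms of $V$. So the real content is the second condition: $\bigcap_{i,j}\Ker(A_iB_j)=0$. I would work with the polynomial notation, writing $\cA(t)=\sum_i A_it^i$ and $\cB(t)=\sum_j B_jt^j$, and note that $\bigcap_{i,j}\Ker(A_iB_j)$ is exactly the set of $x\in V$ such that $A_iB_jx=0$ for all $i,j$.

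The key step is the following reduction: if $x\in\bigcap_{i,j}\Ker(A_iB_j)$, then for each fixed $j$ the vector $B_jx$ lies in $\bigcap_i\Ker(A_i)$, which is $0$ because $\cA\in\widetilde{M}'$. Hence $B_jx=0$ for every $j$, i.e. $x\in\bigcap_j\Ker(B_j)=0$ because $\cB\in\widetilde{M}'$. Therefore $x=0$, proving $\bigcap_{i,j}\Ker(A_iB_j)=0$. This argument is essentially just unwinding the definitions, so I do not expect any genuine obstacle here.

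The only point that requires a word of care is that $\widetilde{M}'$, unlike $\widetilde{M}$, imposes no condition of the form $\bigcap_{k<i}\Ker(A_k)\not\subseteq\Ker(A_i)$ — that is, no strict-descent condition on the flag of kernels. This is precisely why one passes to the larger set $\widetilde{M}'$ before defining products: the product of two strictly-descending sequences need not itself be strictly descending, but it will still satisfy the single global condition $\bigcap\Ker=0$. So I would be sure to emphasize that we only need the intersection-zero condition, and that the argument above delivers exactly that. A short remark that the product is therefore well-defined on $\widetilde{M}'$ (and that, by contrast, it need not preserve $\widetilde{M}$, matching Example \ref{ex:MneqMtilde} in spirit) would round out the proof, but strictly speaking the lemma as stated needs only the two-line computation of the previous paragraph.
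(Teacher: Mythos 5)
Your proof is correct and is essentially the same as the paper's: the identity $\bigcap_i\Ker(A_iB_j)=\Ker(B_j)$ (which follows from $\bigcap_i\Ker(A_i)=0$), intersected over $j$, is exactly the element-wise argument you give. The surrounding remarks about why one works in $\widetilde{M}'$ rather than $\widetilde{M}$ are accurate but not needed for the lemma itself.
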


\begin{proof}
Clearly we have $(\cA\cB)_i=0$ for $i\gg 0$.

Note that
\begin{equation}
\label{eqn:IntersectionAB}
\cap_i \Ker(A_i B_j)=\Ker(B_j)
\end{equation}
since $\cap_i \Ker(A_i)=0$.
Hence
$$
\cap_{i,j}\Ker(A_iB_j)=\cap_j (\cap_i  \Ker(A_i B_j))
=\cap_j \Ker(B_j)=0.
$$
Therefore $\cA\cB\in \widetilde{M}'$.
\end{proof}

\begin{proposition}
\label{prop:Associativity}
Let $\cA,\cB,\cC\in \widetilde{M}'$.
Then $(\cA\cB)\cC=\cA(\cB\cC)$, and
$\widetilde{M}'$ is a monoid.
\end{proposition}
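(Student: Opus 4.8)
The plan is to work in the polynomial notation $\cA(t)=\sum_{i=0}^{l-1}A_it^i$ introduced just above and to recognize the product as an honest multiplication in the polynomial ring $\End(V)[t]$ (in a central indeterminate $t$ over the not-necessarily-commutative ring $\End(V)$), after a substitution. Concretely, if $\cA$ has length $l$, I would first record the reformulation
$$
(\cA\cB)(t)=\cA(t)\cdot\cB(t^l)\qquad\text{in }\End(V)[t],
$$
where $\cB(t^l)=\sum_{j}B_jt^{lj}$: expanding the right-hand side gives $\sum_{i,j}A_iB_j\,t^{\,lj+i}$, which is exactly the defining formula $(\cA\cB)(t)=\sum_{i=0}^{l-1}\sum_{j=0}^{m-1}A_iB_jt^{lj+i}$. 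The one point deserving a sentence of justification is that this is a \emph{faithful} restatement: because $\deg\cA(t)\le l-1$, the monomials of $\cA(t)B_jt^{lj}$ lie in the degree range $[lj,\,lj+l-1]$, and these ranges are disjoint as $j$ runs over $0,\dots,m-1$, so reading off the coefficient sequence of $\cA(t)\cB(t^l)$ returns precisely the tuple $\cA\cB$, which has (formal) length $lm$.

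With this in hand, associativity of the product is just associativity in $\End(V)[t]$. Let $\cA,\cB,\cC$ have lengths $l,m,p$; then $\cA\cB$ has length $lm$ and $\cB\cC$ has length $mp$. On one side,
$$
((\cA\cB)\cC)(t)=(\cA\cB)(t)\cdot\cC(t^{lm})=\cA(t)\,\cB(t^l)\,\cC(t^{lm}).
$$
On the other side, applying the reformulation first to $\cB\cC$ (with $\cB$ of length $m$) and then substituting the central variable $t\mapsto t^l$, using $(t^l)^m=t^{lm}$, gives $(\cB\cC)(t^l)=\cB(t^l)\,\cC(t^{lm})$, hence
$$
(\cA(\cB\cC))(t)=\cA(t)\cdot(\cB\cC)(t^l)=\cA(t)\,\cB(t^l)\,\cC(t^{lm}).
$$
The two right-hand sides coincide, so $(\cA\cB)\cC=\cA(\cB\cC)$.

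It then remains to record closure and a two-sided unit. Closure, $\cA\cB\in\widetilde M'$, is exactly Lemma~\ref{lem:Product}. For the identity I take the one-term sequence $(\mathrm{id}_V)$: it lies in $\widetilde M'$ since $\Ker(\mathrm{id}_V)=0$, and specializing the product formula to $l=1$ (resp.\ $m=1$) gives $(\mathrm{id}_V)\,\cA=\cA$ and $\cA\,(\mathrm{id}_V)=\cA$ immediately. Combining closure, associativity, and the unit shows $\widetilde M'$ is a monoid.

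I do not expect a genuine obstacle. The only care needed is bookkeeping of lengths — in particular that the length of $\cA\cB$ is the \emph{product} of the lengths, since it is that exponent $lm$ which must reappear in both displayed computations for them to match — together with the one-line verification that the substitution $t\mapsto t^l$ in the second factor introduces no collision among monomials, which is what legitimizes the identity $(\cA\cB)(t)=\cA(t)\cB(t^l)$.
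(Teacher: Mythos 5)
Your proof is correct and follows essentially the same route as the paper: both work in the polynomial notation and reduce associativity to the exponent identity $(kl)h+kj+i=k(lh+j)+i$, the paper by writing out both triple sums directly and you by packaging the same computation as the substitution identity $(\cA\cB)(t)=\cA(t)\cB(t^{l})$ in $\End(V)[t]$. Your extra remarks on the disjointness of the degree ranges and on tracking the formal length $lm$ are a welcome clarification of a point the paper leaves implicit.
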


\begin{proof}
Let
$\cA(t)=\sum_{i=0}^{k-1}A_it^i, \cB(t)=\sum_{j=0}^{l-1}B_jt^j,
\cC(t)=\sum_{h=0}^{m-1}C_ht^h\in \widetilde{M}'$.
Then
\begin{eqnarray*}
(\cA\cB)\cC
&=&
(\sum_{i,j}A_iB_j t^{kj+i})\sum_h C_ht^h\\
&=&
\sum_{i,j, h}A_iB_jC_j t^{klh+kj+i},\\
\cA(\cB\cC)
&=&
(\sum_{i=0}^{k-1}A_it^i)(\sum_{j,h}B_jC_h t^{lh+j})\\
&=&
\sum_{i,j,h} A_iB_jC_j t^{k(lh+j)+i}.
\end{eqnarray*}
Clearly $\cA I=\cA=I\cA$, where $I\in GL(V)$ is the identity $id_V$.
\end{proof}

For $\cA=(A_0, A_1, \ldots)\in \widetilde{M}'$,
take the subsequence
\begin{equation}
\label{eqn:Psi}
\Psi(\cA)=(A_{i_0}, A_{i_1},\ldots)
\end{equation}
such that
$$
K(\cA)_{i_k}=\cdots=K(\cA)_{i_{k+1}-1}\supsetneq K(\cA)_{i_{k+1}}
$$
for all $k$.
Recall that $K(\cA)_j=\bigcap_{i=0}^j \Ker(A_i)$.
Then clearly $\Psi(\cA)\in \widetilde{M}$.

\begin{lemma}
Let $\cA, \cB\in \widetilde{M}'$. 
Then
$\Psi(\Psi(\cA)\Psi(\cB))=\Psi(\cA\cB)$.
\end{lemma}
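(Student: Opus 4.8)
The plan is to compare the two operators $\Psi(\Psi(\cA)\Psi(\cB))$ and $\Psi(\cA\cB)$ entry by entry, using the description of $\Psi$ as ``discard those terms $A_i$ that do not strictly shrink the running kernel $K(\cA)_i$''. First I would record the effect of $\Psi$ on a single factor at the level of kernels: if $\Psi(\cA)=(A_{i_0},A_{i_1},\dots)$, then the chain of kernels $V_0\supsetneq V_1\supsetneq\cdots$ attached to $\Psi(\cA)$ (so $V_{k+1}=\Ker(A_{i_k}|_{V_k})$, equivalently $V_{k+1}=K(\cA)_{i_k}$) is exactly the chain of \emph{distinct} values taken by $j\mapsto K(\cA)_j$, and similarly for $\cB$. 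Thus $\Psi$ does not change the final nested sequence of kernels; it only removes repetitions.

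Next I would compute the running kernels of the product $\cA\cB$ directly from the formula $(\cA\cB)(t)=\sum_{i,j}A_iB_j t^{lj+i}$ (with $\cA$ of length $l$), i.e.\ the terms of $\cA\cB$ in order are $A_0B_0,\dots,A_{l-1}B_0,A_0B_1,\dots$. Using \eqref{eqn:IntersectionAB}, namely $\cap_i\Ker(A_iB_j)=\Ker(B_j)$, one sees that after running through the full $j$-block $A_0B_j,\dots,A_{l-1}B_j$ the running kernel drops to $\bigcap_{j'\le j}\Ker(B_{j'})=K(\cB)_j$; and within the $j$-th block the running kernel, starting from $K(\cB)_{j-1}$, decreases through the intermediate spaces $K(\cB)_{j-1}\cap\bigcap_{i'\le i}\Ker(A_{i'})=K(\cB)_{j-1}\cap K(\cA)_i$. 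So the full ordered list of running-kernel values of $\cA\cB$ is
\[
\{\,K(\cB)_{j-1}\cap K(\cA)_i \,:\, j=0,1,\dots;\ i=0,1,\dots,l-1\,\}
\]
in lexicographic order of $(j,i)$ (with $K(\cB)_{-1}:=V$). The key point is that the \emph{distinct} values in this list, together with which surviving term $A_iB_j$ realizes each strict drop, depend on $\cA$ only through its distinct running-kernel values and on $\cB$ only through its distinct running-kernel values --- precisely the data preserved by $\Psi$. Carrying out the identical computation for $(\Psi\cA)(\Psi\cB)$ gives the same distinct values realized by the ``same'' surviving terms (after identifying $A_{i_k}|_{V_k}$ with the corresponding restriction of $A_i$), because passing from $\cA$ to $\Psi(\cA)$ merely deletes terms that produced no drop and hence get deleted again by the outer $\Psi$.

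Concretely, the argument I would write has three steps: (i) show $K(\Psi(\cA)\Psi(\cB))_\bullet$ and $K(\cA\cB)_\bullet$ have the same sequence of \emph{distinct} values, by the kernel computation above plus \eqref{eqn:IntersectionAB}; (ii) show that for each strict drop, the term of $\Psi(\cA)\Psi(\cB)$ surviving in $\Psi(\Psi(\cA)\Psi(\cB))$ and the term of $\cA\cB$ surviving in $\Psi(\cA\cB)$ are represented by the same homomorphism on the relevant kernel space --- this is because $\Psi$ on a single factor deletes only terms whose restriction to the current kernel is already determined by earlier ones (indeed is $0$ on the \emph{next} kernel but that is irrelevant to which term is kept), so the ``first term in a block causing the drop'' is unchanged; (iii) conclude that the two subsequences coincide. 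I expect the main obstacle to be step (ii): one has to be careful that deleting a non-dropping term $A_i$ from $\cA$ before forming the product does not accidentally change \emph{which} product term $A_{i'}B_j$ is the first to cause a given kernel drop in the block. The cleanest way around this is to observe that whether $A_iB_j$ causes a drop at stage $(j,i)$ depends only on whether $K(\cB)_{j-1}\cap K(\cA)_{i-1}\supsetneq K(\cB)_{j-1}\cap K(\cA)_i$, i.e.\ only on the $K(\cA)_\bullet$ chain, so the bookkeeping reduces to a purely lattice-theoretic statement about the two finite chains of subspaces and their pairwise intersections, which is routine once set up. Throughout I would use the polynomial notation of Section 6 to keep the indexing manageable, and invoke Lemma \ref{lem:Product} to know all objects in sight lie in $\widetilde{M}'$.
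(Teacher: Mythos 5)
Your overall architecture is the right one (and is essentially the paper's): show that every term $A_iB_j$ of $\cA\cB$ discarded in forming $\Psi(\cA)\Psi(\cB)$ --- i.e.\ one with $A_i$ redundant in $\cA$ or $B_j$ redundant in $\cB$ --- already causes no drop of the running kernel of $\cA\cB$, so deleting it does not affect the subsequent application of $\Psi$. But the concrete computation you build this on is wrong. You assert that the running kernel of $\cA\cB$ inside the $j$-th block is $K(\cB)_{j-1}\cap\bigcap_{i'\le i}\Ker(A_{i'})=K(\cB)_{j-1}\cap K(\cA)_i$, and hence that whether $A_iB_j$ causes a drop is decided by whether $K(\cB)_{j-1}\cap K(\cA)_{i-1}\supsetneq K(\cB)_{j-1}\cap K(\cA)_i$. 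This conflates $\Ker(A_{i'}B_j)$ with $\Ker(A_{i'})$: the correct identity is $\Ker(A_{i'}B_j)=B_j^{-1}(\Ker(A_{i'}))$, and only the \emph{full} intersection over $i'$ collapses to $\Ker(B_j)$ via \eqref{eqn:IntersectionAB}. The correct running kernel is
$$
K(\cA\cB)_{lj+i-1}=K(\cB)_{j-1}\cap B_j^{-1}\bigl(K(\cA)_{i-1}\bigr),
$$
which is not a lattice-theoretic function of the two chains alone. Your criterion is falsified by Example \ref{ex:M_HNonSemiGroup}: there the product (in that example's notation) is $(B_0A_0,B_1A_0,B_0A_1,B_1A_1)$ with all four factors non-redundant and both kernel chains strictly decreasing, yet $B_1A_0$ causes no drop ($\Ker(B_1A_0)=\Ker(B_0A_0)=\langle\e_1\rangle$) and is discarded by $\Psi$. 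So the ``purely lattice-theoretic statement about the two finite chains and their pairwise intersections'' you propose to reduce to does not hold, and your steps (i) and (ii) as described would fail.

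The fix --- which is what the paper does --- is to avoid determining which terms survive and to prove only the one implication actually needed: if $\Ker(A_i)\supseteq K(\cA)_{i-1}$ then $\Ker(A_iB_j)=B_j^{-1}(\Ker A_i)\supseteq B_j^{-1}(K(\cA)_{i-1})\supseteq K(\cA\cB)_{lj+i-1}$; and if $\Ker(B_j)\supseteq K(\cB)_{j-1}$ then $K(\cA\cB)_{lj+i-1}\subseteq K(\cB)_{j-1}\subseteq\Ker(B_j)\subseteq\Ker(A_iB_j)$. Either way $A_iB_j$ is a non-dropping term of $\cA\cB$, and deleting non-dropping terms from a sequence does not change its image under $\Psi$. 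Note that the converse implication (non-redundant $A_i$ and $B_j$ give a dropping $A_iB_j$) is simply false --- that is exactly why the statement carries an outer $\Psi$ on the left-hand side.
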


\begin{proof}
Let $\cA=(A_0, A_1, A_2,\ldots, A_{l-1}),
\cB=(B_0, B_1, B_2,\ldots, B_{m-1})\in \widetilde{M}'$.
Then by \eqref{eqn:IntersectionAB}
$$
K(\cA\cB)_{lj+i-1}=
(\bigcap_{k=0}^{j-1}\Ker(B_k))\cap
(\bigcap_{k=0}^{i-1}\Ker(A_kB_j)).
$$
Suppose that $K(\cA)_{i}=K(\cA)_{i-1}$, or 
$\Ker(A_i)\supseteq K(\cA)_{i-1}=\cap_{k=0}^{i-1}\Ker(A_k)$.
Then $\Ker(A_iB_j)\supseteq \cap_{k=0}^{i-1}\Ker(A_kB_j)$.
Hence $\Ker(A_iB_j)\supseteq K(\cA\cB)_{lj+i-1}$, or
$K(\cA\cB)_{lj+i}=K(\cA\cB)_{lj+i-1}$.

Next suppose that $K(\cB)_{j}=K(\cB)_{j-1}$ or
$\Ker(B_j)\supseteq K(\cB)_{j-1}=\cap_{k=0}^{j-1}\Ker(B_k)$.
Then
$$
K(\cA\cB)_{lj+i-1}\subseteq
\bigcap_{k=0}^{j-1}\Ker(B_k)
\subseteq
\Ker(B_j)
\subseteq
\Ker(A_iB_j),
$$
or
$K(\cA\cB)_{lj+i}=K(\cA\cB)_{lj+i-1}$.
Hence we conclude
$\Psi(\Psi(\cA)\Psi(\cB))=\Psi(\cA\cB)$.
\end{proof}

\begin{corollary}
$\widetilde{M}$ is a monoid.

For $\cA, \cB\in \widetilde{M}$,
by abuse of notation,
$\Psi(\cA\cB)\in \widetilde{M}$ is also
denoted by $\cA\cB$.
\end{corollary}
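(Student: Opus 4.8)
The plan is to transport the monoid structure of $\widetilde{M}'$ (Proposition \ref{prop:Associativity}) to $\widetilde{M}$ along the map $\Psi\colon\widetilde{M}'\to\widetilde{M}$ of \eqref{eqn:Psi}, exploiting the identity $\Psi(\Psi(\cA)\Psi(\cB))=\Psi(\cA\cB)$ proved in the preceding lemma. For $\cA,\cB\in\widetilde{M}$ I would \emph{define} $\cA*\cB:=\Psi(\cA\cB)$, where on the right $\cA\cB$ denotes the product computed in $\widetilde{M}'$; this makes sense since $\widetilde{M}\subseteq\widetilde{M}'$, the product $\cA\cB$ lies in $\widetilde{M}'$ by Lemma \ref{lem:Product}, and $\Psi$ carries it back into $\widetilde{M}$ as observed immediately after \eqref{eqn:Psi}.

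The one preliminary fact needed is that $\Psi$ restricts to the identity on $\widetilde{M}$. Indeed, for $\cA=(A_0,\dots,A_m)\in\widetilde{M}$ the defining condition $\bigcap_{k=0}^{i-1}\Ker(A_k)\not\subseteq\Ker(A_i)$ for all $i$ says exactly that $K(\cA)_0\supsetneq K(\cA)_1\supsetneq\cdots$ is strictly decreasing; hence each maximal run of indices on which $K(\cA)_\bullet$ is constant — the blocks $K(\cA)_{i_k}=\cdots=K(\cA)_{i_{k+1}-1}$ appearing in the definition of $\Psi$ — consists of a single index, and the subsequence characterizing $\Psi(\cA)$ is forced to be all of $\cA$, so $\Psi(\cA)=\cA$.

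For associativity, fix $\cA,\cB,\cC\in\widetilde{M}$. Then $(\cA*\cB)*\cC=\Psi\big(\Psi(\cA\cB)\cdot\cC\big)=\Psi\big(\Psi(\cA\cB)\cdot\Psi(\cC)\big)=\Psi\big((\cA\cB)\cC\big)$, where the second equality uses $\Psi(\cC)=\cC$ and the third is the preceding lemma applied to the pair $\cA\cB,\,\cC\in\widetilde{M}'$. Symmetrically $\cA*(\cB*\cC)=\Psi\big(\cA\cdot\Psi(\cB\cC)\big)=\Psi\big(\Psi(\cA)\cdot\Psi(\cB\cC)\big)=\Psi\big(\cA(\cB\cC)\big)$, using $\Psi(\cA)=\cA$ and the lemma applied to $\cA,\,\cB\cC\in\widetilde{M}'$. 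Since $(\cA\cB)\cC=\cA(\cB\cC)$ holds in $\widetilde{M}'$ by Proposition \ref{prop:Associativity}, the two sides coincide. For the unit, $I=\mathrm{id}_V$ lies in $GL(V)\subseteq\widetilde{M}$ as a one-term sequence, and $\cA*I=\Psi(\cA I)=\Psi(\cA)=\cA$, and likewise $I*\cA=\Psi(I\cA)=\Psi(\cA)=\cA$, using $\cA I=\cA=I\cA$ in $\widetilde{M}'$ together with $\Psi|_{\widetilde{M}}=\mathrm{id}$. This exhibits $(\widetilde{M},*)$ as a monoid, and the final sentence of the statement is just the convention that $*$ henceforth be written as juxtaposition.

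I do not anticipate a genuine obstacle: the real content is the identity $\Psi(\Psi(\cA)\Psi(\cB))=\Psi(\cA\cB)$, which is already in hand. The only care required is bookkeeping — verifying $\Psi|_{\widetilde{M}}=\mathrm{id}$ so that $\cA*\cB$ for $\cA,\cB\in\widetilde{M}$ is literally $\Psi$ of their $\widetilde{M}'$-product, and keeping straight in the associativity chain which of the two products (in $\widetilde{M}$ or in $\widetilde{M}'$) each juxtaposition denotes.
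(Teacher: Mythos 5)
Your proof is correct and is exactly the argument the paper intends (the corollary is stated without proof as an immediate consequence of the preceding lemma and Proposition \ref{prop:Associativity}): transport the product via $\Psi$, note $\Psi|_{\widetilde{M}}=\mathrm{id}$ because the kernel intersections are strictly decreasing for elements of $\widetilde{M}$, and deduce associativity from $\Psi(\Psi(\cA)\Psi(\cB))=\Psi(\cA\cB)$ together with associativity in $\widetilde{M}'$. Your explicit verification of $\Psi|_{\widetilde{M}}=\mathrm{id}$ is a worthwhile piece of bookkeeping that the paper leaves tacit.
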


We define $\pi(\cA):=\pi(\Psi(\cA))$ for $\cA\in \widetilde{M}'$.

\begin{proposition}
\label{prop:Product}
The product in $\widetilde{M}$
induces the one in ${M}$.
Then it induces the one in $\P M$.
\end{proposition}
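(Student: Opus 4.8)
The plan is to prove both assertions by the same ``factoring'' device. First I would show that $(\cA,\cB)\mapsto\pi(\cA\cB)$ on $\widetilde M\times\widetilde M$ depends only on $\pi(\cA)$ and $\pi(\cB)$; since $\pi\colon\widetilde M\to M$ is surjective this yields a well-defined binary operation on $M$, and the monoid axioms for $M$ are then inherited from the fact that $\widetilde M$ is a monoid (the Corollary above) together with $\pi(id_V)=id_V$. Running the identical argument for $\P\colon M\to\P M$ descends the operation further to $\P M$, with $PGL(V)$ a submonoid.

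For the first step, fix $\cA=(A_0,\dots,A_{l-1})$ and $\cB=(B_0,\dots,B_{m-1})$ in $\widetilde M$ and examine $\pi(\cA\cB)=\pi(\Psi(\cA\cB))$. Exactly as in the proof of the preceding lemma, $\bigcap_{k\le i}\Ker(A_kB_j)=\{x\in V:B_jx\in K(\cA)_i\}$ together with $\bigcap_k\Ker A_k=0$ gives
$$
K(\cA\cB)_{lj+i}=K(\cB)_{j-1}\cap\{x\in V:B_jx\in K(\cA)_i\}\qquad(0\le i\le l-1).
$$
Since $K(\cA)_i=V(\pi(\cA))_{i+1}$ and the restriction ${B_j}_{|K(\cB)_{j-1}}$ is exactly the $j$-th entry of $\pi(\cB)$ (with the convention $K(\cB)_{-1}:=V$), the entire descending chain $K(\cA\cB)_\bullet$, the subsequence of indices extracted by $\Psi$, and the members of $K(\cA\cB)_\bullet$ at which $\pi$ performs its restrictions, are all determined by $\pi(\cA)$ and $\pi(\cB)$ alone.

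It then remains to identify the maps. If $lj+i$ is the $k$-th index extracted by $\Psi$, the $k$-th entry of $\pi(\Psi(\cA\cB))$ is $(A_iB_j)_{|W}$ with $W=K(\cA\cB)_{lj+i-1}=K(\cB)_{j-1}\cap\{x:B_jx\in K(\cA)_{i-1}\}$ (convention $K(\cA)_{-1}:=V$). For $x\in W$ one has $B_jx={B_j}_{|K(\cB)_{j-1}}(x)\in K(\cA)_{i-1}$, hence $A_iB_jx={A_i}_{|K(\cA)_{i-1}}\bigl({B_j}_{|K(\cB)_{j-1}}(x)\bigr)$: this entry is the composite of the $i$-th entry of $\pi(\cA)$ with a restriction of the $j$-th entry of $\pi(\cB)$, the composition making sense because $B_j(W)\subseteq K(\cA)_{i-1}$. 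Thus $\pi(\cA\cB)$ is a function of $\pi(\cA)$ and $\pi(\cB)$, which is the first claim.

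For the passage to $\P M$, observe that if $\cA'$ and $\cB'$ arise from $\cA,\cB\in M$ by scaling the entries by nonzero constants, then choosing the lifts to $\widetilde M$ correspondingly makes $\cA'\cB'$ an entrywise rescaling of $\cA\cB$ in $\widetilde M'$; the kernels are untouched, so $\Psi$ extracts the same indices, the domains coincide, and each extracted map is merely rescaled after restriction. Hence $\P(\cA'\cB')=\P(\cA\cB)$ and $\P$ carries the product of $M$ to a well-defined product on $\P M$. I expect the only genuinely delicate point to be the index bookkeeping in the first step — keeping track of how the index $lj+i$ passes through the $\widetilde M'$-product, the subsequence extracted by $\Psi$, and the restriction built into $\pi$ — rather than any new idea, since the key kernel identity is already available from the preceding lemma.
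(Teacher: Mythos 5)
Your argument is correct and follows essentially the same route as the paper: both rest on the kernel identities $\bigcap_i\Ker(A_iB_j)=\Ker(B_j)$ and $\bigcap_{i<l}\Ker(A_iB_j)=\{x:B_jx\in K(\cA)_{l-1}\}$, which show that the chain $K(\cA\cB)_\bullet$ and the restricted entries are controlled by $\pi(\cA)$ and $\pi(\cB)$. The only (cosmetic) difference is that you compute the chain exactly and read off the induced product as composites of restrictions in one pass, whereas the paper varies one factor at a time ($\cA\mapsto\cA'$, then $\cB\mapsto\cB'$) using the corresponding containments.
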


\begin{proof}
Let $\cA=(A_0, A_1, \ldots, A_{m-1}), \cB=(B_0, B_1, \ldots)
\in \widetilde{M}'$.

Let $\cA'=(A'_0, A'_1, \ldots, A'_{m-1})$
with
$K(\cA)_{l-1}=K(\cA')_{l-1}$ and
${A'_{l}}_{|K(\cA)_{l-1}}={A_{l}}_{|K(\cA)_{l-1}}$ for all $l$.

Then $A_lB_j=(\cA\cB)_{jm+l+1}$.
We have $K(\cA\cB)_{jm+l}\subseteq \cap_{i<l}\Ker(A_iB_j)$.
Note that
$$
x\in \cap_{i<l}\Ker(A_iB_j) \Leftrightarrow 
B_j x\in \cap_{i<l}\Ker(A_i)=K(\cA)_{l-1}=K(\cA')_{l-1}.
$$
Hence $x\in K(\cA\cB)_{jm+l}$ implies $A_lB_j x=A'_lB_j x$.
Namely $(A_lB_j)_{|K(\cA\cB)_{jm+l}} =(A'_lB_j)_{|K(\cA\cB)_{jm+l}}$.
Hence we have $\pi(\cA\cB)=\pi(\cA'\cB)$.

Next let
$\cB'=(B'_0, B'_1, \ldots )$
with
$K(\cB)_{l-1}=K(\cB')_{l-1}$ and
${B'_{l}}_{|K(\cB)_{l-1}}={B_{l}}_{|K(\cB)_{l-1}}$ for all $l$.

Then $A_iB_l=(\cA\cB)_{lm+i+1}$, and
$$
K(\cA\cB)_{lm+i}\subseteq \cap_{j<l}\cap_k \Ker(A_kB_j).
$$
Since
$\cap_{k}\Ker(A_k B_{j})=\Ker B_{j}$ by
$\cap_k \Ker(A_k)=0$,
we have
\begin{equation}
\label{eqn:KABtoKB}
K(\cA\cB)_{lm+i}\subseteq \cap_{j<l} \Ker(B_j)=
K(\cB)_{l-1}=K(\cB')_{l-1}.
\end{equation}
Hence
$(A_iB_l)_{|K(\cA\cB)_{lm+i}}=(A_iB'_l)_{|K(\cA\cB)_{lm+i}}$.
Hence we have $\pi(\cA\cB)=\pi(\cA\cB')$.

We have thus proved that the product in $\widetilde{M}$
induces the one in ${M}$.
It is obvious that it also induces the one in $\P{M}$.
\end{proof}

\begin{proposition}
\label{prop:monoid-hom}
$\Phi:\P{M}\to {\rm Map}(\P(V))$ is a monoid homomorphism,
where ${\rm Map}(\P(V))$ is a monoid with the composition of
mappings as a product.
Hence $\P M$ acts on $\P(V)$.
\end{proposition}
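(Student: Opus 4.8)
The plan is to show that for $\cA,\cB\in\widetilde M'$ (hence for their images in $\P M$) we have $\Phi(\P\pi(\cA\cB))=\Phi(\P\pi(\cA))\circ\Phi(\P\pi(\cB))$ as elements of $\Map(\P(V))$, together with the trivial fact that $\Phi$ sends the identity $I\in GL(V)$ to $\mathrm{id}_{\P(V)}$. Since $\pi:\widetilde M'\to M$ is surjective, $\P:M\to\P M$ is surjective, and the products on $M$ and $\P M$ were constructed in Proposition \ref{prop:Product} precisely as the images of the product on $\widetilde M'$, verifying the multiplicativity at the level of $\widetilde M'$ suffices. The final assertion "$\P M$ acts on $\P(V)$" is then immediate: $\Phi$ being a monoid homomorphism into $\Map(\P(V))$ gives a monoid action, i.e.\ each $\P\cA\in\P M$ acts on $\P(V)$ via $\Phi(\P\cA)$, compatibly with the product.

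First I would fix $\cA=(A_0,\dots,A_{l-1})$ and $\cB=(B_0,\dots,B_{m-1})$ in $\widetilde M'$, take an arbitrary $\overline x\in\P(V)$, and compute $\Phi(\P\pi(\cA))(\Phi(\P\pi(\cB))(\overline x))$ directly. By definition of $\Phi$ on $\P M$, there is a unique index $j$ with $x\in K(\cB)_{j-1}\setminus K(\cB)_{j}$ — in terms of the subsequence $\Psi(\cB)$, the unique "jump" index — and $\Phi(\P\pi(\cB))(\overline x)=\overline{B_j x}$; then there is a unique index $i$ with $B_j x\in K(\cA)_{i-1}\setminus K(\cA)_i$, and $\Phi(\P\pi(\cA))(\overline{B_j x})=\overline{A_i B_j x}$. (Here one uses $K(\cA)=0$ to guarantee $\overline{B_jx}\in\P(V)$ makes sense, i.e.\ $B_jx\neq 0$, which follows from $x\notin K(\cB)_j\supseteq\cdots$; more precisely $B_jx\neq 0$ because $x\notin\Ker B_j$ as $x\notin K(\cB)_j$ and $x\in K(\cB)_{j-1}$.) On the other side, I would identify which term of the sequence $\cA\cB=(\dots,A_iB_j,\dots)$ governs $\Phi(\P\pi(\cA\cB))(\overline x)$: using the formula $K(\cA\cB)_{mj+i-1}=(\bigcap_{k<j}\Ker B_k)\cap(\bigcap_{k<i}\Ker(A_kB_j))$ established in the proof of the lemma computing $\Psi(\cA\cB)$, together with the translation $x\in\bigcap_{k<i}\Ker(A_kB_j)\iff B_jx\in K(\cA)_{i-1}$ used in the proof of Proposition \ref{prop:Product}, one checks that the pair $(i,j)$ found above is exactly the one for which $x\in K(\cA\cB)_{mj+i-1}\setminus K(\cA\cB)_{mj+i}$, so $\Phi(\P\pi(\cA\cB))(\overline x)=\overline{A_iB_j x}$. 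Comparing the two computations gives equality at $\overline x$, and since $\overline x$ was arbitrary, $\Phi(\P\pi(\cA\cB))=\Phi(\P\pi(\cA))\circ\Phi(\P\pi(\cB))$.

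The main obstacle is the bookkeeping in the last step: matching the "jump index at $\overline x$" for the concatenated sequence $\cA\cB$ with the pair of jump indices for $\cB$ (at $\overline x$) and for $\cA$ (at $\overline{B_jx}$). The subtlety is that $\cA\cB$ as defined is not in reduced form ($\Psi$ has not been applied), so many of its consecutive terms $A_kB_j$ leave $K(\cA\cB)$ unchanged; one must verify that the term that actually realizes the jump of $K(\cA\cB)$ "through" $x$ is precisely $A_iB_j$ with the $(i,j)$ described, rather than some earlier or later term. This is exactly the content of equation \eqref{eqn:IntersectionAB} and the kernel computations already carried out in the proofs of the preceding lemma and of Proposition \ref{prop:Product}, so the argument is a careful assembly of those facts rather than anything genuinely new; once the index-matching is pinned down, the rest is the routine observation that $\Phi(I)=\mathrm{id}$ and that a monoid homomorphism into $\Map(\P(V))$ is the same thing as a monoid action on $\P(V)$.
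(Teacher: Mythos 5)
Your proposal is correct and takes essentially the same approach as the paper: a pointwise computation at each $\overline{x}$ that matches the jump index of $\cA\cB$ with the pair of jump indices of $\cB$ at $x$ and of $\cA$ at $B_j x$, using exactly the kernel identities \eqref{eqn:IntersectionAB} and \eqref{eqn:KABtoKB}; the paper merely runs the index-matching in the opposite direction, starting from $x\in K(\cA\cB)_{k-1}\setminus K(\cA\cB)_{k}$ with $(\cA\cB)_k=A_iB_j$ and deducing the indices for $\cB$ and $\cA$. (One cosmetic slip: with $\cA$ of length $l$ the relevant index is $lj+i$, not $mj+i$.)
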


\begin{proof}
Let $\cA=(A_0,\ldots, A_{l-1}), 
\cB=(B_0,\ldots, B_{m-1})\in {M}$.
Suppose that
$A_iB_j=(\cA\cB)_{k}$ and
$x\in K(\cA\cB)_{k-1}\setminus K(\cA\cB)_{k}$.
Then, as in \eqref{eqn:KABtoKB},
$x\in K(\cB)_{j-1}\setminus K(\cB)_{j}$.
Hence
$\Phi(\P\cB) \overline{x}= \overline{B_j x}$.

Since we have $B_j x\in K(\cA)_{i-1}\setminus K(\cA)_{i}$, 
$$
\Phi(\P\cA)(\Phi(\P\cB) \overline{x})=\Phi(\P\cA)(\overline{B_j x})
=\overline{A_iB_j x}.
$$

By $x\in K(\cA\cB)_{k-1}\setminus K(\cA\cB)_{k}$, we have
$$
\Phi((\P\cA)(\P\cB))\overline{x}=\Phi(\P(\cA\cB))\overline{x}=
\overline{(\cA\cB)_{k}x}
=\overline{A_iB_j x}.
$$
Hence
$$
\Phi(\P\cA)(\Phi(\P\cB) \overline{x})=\Phi((\P\cA)(\P\cB))\overline{x}
\quad (\forall x\in \P(V)).
$$
Namely, $\Phi(\P\cA)\Phi(\P\cB)=\Phi((\P\cA)(\P\cB))$.
\end{proof}

\begin{proposition}
$(\P{M})^\times=PGL(V)=(\Phi\P M)^\times$.
\end{proposition}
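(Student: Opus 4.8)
The plan is to prove the two displayed equalities by establishing four inclusions, two of which are essentially formal. First I would record that $PGL(V)$ is a subgroup of the monoid $\P M$: for $A\in GL(V)$ the product of the one-term sequences $(A)$ and $(A^{-1})$ in $\widetilde M'$ is $(AA^{-1})=(\mathrm{id}_V)$, and since $\mathrm{id}_V\in GL(V)$ the operators $\Psi$ and $\pi$ leave it unchanged, so $\overline A\cdot\overline{A^{-1}}=\overline{A^{-1}}\cdot\overline A=\overline{\mathrm{id}_V}$ is the identity of $\P M$. Hence $PGL(V)\subseteq(\P M)^\times$, and applying the monoid homomorphism $\Phi$ of Proposition \ref{prop:monoid-hom} (which carries units to units) gives $PGL(V)=\Phi(PGL(V))\subseteq(\Phi\P M)^\times$.

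The substantive step is the reverse inclusion, and I would isolate it as the claim: \emph{if $\P\cA\in\P M\setminus PGL(V)$ then $\Phi(\P\cA)$ is not an injective self-map of $\P(V)$}. Writing $\cA=(A_0,\ldots,A_m)\in M$, membership outside $PGL(V)$ forces $m\geq1$, because a one-term sequence $(A_0)\in M$ already satisfies $\Ker A_0=V_{m+1}=0$, i.e.\ $A_0\in GL(V)$. For $m\geq1$ the subspace $V_1=\Ker A_0$ is nonzero — otherwise $\Hom(V_1,V)$ would contain no nonzero $A_1$ — and is proper in $V=V_0$ since $A_0\neq0$; in particular $\dim V\geq2$. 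Then I would pick $0\neq w\in V_1$ and $u\in V_0\setminus V_1$, put $u':=u+w$, observe that $u'\in V_0\setminus V_1$ and that $\overline u\neq\overline{u'}$ (if $u'=\lambda u$ then $w=(\lambda-1)u$, which is $0$ for $\lambda=1$ and lies in $V_1$ only when $u\in V_1$ for $\lambda\neq1$, both impossible), and compute $\Phi(\P\cA)(\overline{u'})=\overline{A_0u'}=\overline{A_0u}=\Phi(\P\cA)(\overline u)$ using $A_0w=0$. This gives the claim.

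From the claim the conclusion is quick. A left-invertible element of $\Map(\P(V))$ must be injective, so no $\Phi(\P\cA)$ with $\P\cA\notin PGL(V)$ is a unit of $\Map(\P(V))$, hence none is a unit of the submonoid $\Phi\P M$; therefore $(\Phi\P M)^\times\subseteq\Phi(PGL(V))=PGL(V)$, giving the second equality together with the first paragraph. For the first equality, if $\P\cA\in(\P M)^\times$ then $\Phi(\P\cA)\in(\Phi\P M)^\times=PGL(V)$, so $\Phi(\P\cA)$ is injective, and the contrapositive of the claim forces $\P\cA\in PGL(V)$; thus $(\P M)^\times\subseteq PGL(V)$, and equality holds.

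I do not expect any serious obstacle here: the only genuine idea is the claim, which amounts to the observation that the leading operator $A_0$ of a sequence of length at least two has nonzero kernel, so $\Phi(\P\cA)$ already fails to be injective on $V_0\setminus V_1$ (it collapses each coset $u+V_1$ to a point). The minor points I would be careful about are that the monoid product of $\P M$ really does restrict to the ordinary product on $PGL(V)$ (so that $PGL(V)$ is literally a subgroup, not merely a subset), and that invertibility transfers along the homomorphism $\Phi$ in the direction used.
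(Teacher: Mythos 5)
Your proof is correct, but it goes through a different mechanism than the paper's. The paper argues purely algebraically inside $\widetilde{M}'$: writing $\cA=(A_0,\dots,A_{l-1})$, $\cB=(B_0,\dots,B_{m-1})$ with $\Psi(\cA\cB)=I$, it locates the first nonzero entry $(\cA\cB)_{lj+i}=A_iB_j$ of the product, observes that this entry must be (a scalar multiple of) $I$ so that $A_i,B_j\in GL(V)$, and then derives a contradiction from $i>0$ or $j>0$ because the earlier entries $A_0B_j$ resp.\ $A_iB_0$ vanish, forcing $A_0=0$ resp.\ $B_0=0$. You instead push everything through $\Phi$ into $\Map(\P(V))$ and use the single geometric observation that when $A_0\notin GL(V)$ the map $\Phi(\P\cA)$ collapses each coset $u+\Ker(A_0)$ to a point, hence is not injective and cannot be left-invertible under composition; combined with the fact that $\Phi$ carries units to units, this settles both $(\P M)^\times$ and $(\Phi\P M)^\times$ at once. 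Your route buys a cleaner treatment of the $\Phi\P M$ case (the paper's opening step, extracting $\Psi(\cA\cB)=I$ from an identity in $\Phi\P M$, is itself a small argument since $\Phi$ is not injective, cf.\ Example \ref{ex:MneqMtilde}, and your injectivity claim replaces it), and you also record the easy inclusion $PGL(V)\subseteq(\P M)^\times$ explicitly, which the paper leaves implicit; the paper's route stays entirely inside the monoid $\widetilde{M}'$ and never invokes the action on $\P(V)$. The only points worth double-checking in your write-up are exactly the ones you flag yourself, and both are fine: the product of one-term sequences in $\widetilde{M}'$ is the ordinary product, so $PGL(V)$ is literally a subgroup of $\P M$, and Proposition \ref{prop:monoid-hom} gives the transfer of invertibility along $\Phi$ in the direction you use.
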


\begin{proof}
Let $\P\cA\in (\P M)^\times, (\Phi\P M)^\times$.
Then there exist $\cA=(A_0,\ldots, A_{l-1}),
\cB=(B_0,\ldots, B_{m-1})\in \widetilde{M}$ such that
$\Psi(\cA\cB)=I$.

In $\widetilde{M}'$, 
we have $(\cA\cB)_{lj+i}=A_iB_j$.
Since $\Psi(\cA\cB)=I$, there exist $i,j$ such that
$(\cA\cB)_{lj+i}=A_iB_j=I$ and $(\cA\cB)_k=0$ for all $k<lj+i$.
Then $A_i, B_j\in GL(V)$.
We need to prove $i=j=0$.

Suppose that $i>0$.
Then $A_0B_j=0$, and hence $A_0=0$ since $B_j\in GL(V)$.
This contradicts the fact that $\cA\in \widetilde{M}$.

Suppose that $j>0$.
Then $A_iB_0=0$, and hence $B_0=0$ since $A_i\in GL(V)$.
This contradicts the fact that $\cB\in \widetilde{M}$.
\end{proof}

\begin{example}
Let $n=2$.
Let
\begin{eqnarray*}
A_0&=&
\begin{bmatrix}
1 & 0\\
0 & 0
\end{bmatrix},
\qquad
A_1=
\begin{bmatrix}
0 & 1\\
0 & 2
\end{bmatrix},\\
B_0&=&
\begin{bmatrix}
0 & 0\\
1 & 0
\end{bmatrix},
\qquad
B_1=
\begin{bmatrix}
1 & 2\\
0 & 0
\end{bmatrix}.
\end{eqnarray*}
Then
$$
A_0B_0=A_1B_1=
O,
\qquad
A_1B_0=
\begin{bmatrix}
1 & 0\\
2 & 0
\end{bmatrix}
\qquad
A_0B_1=
\begin{bmatrix}
1 & 2\\
0 & 0
\end{bmatrix}.
$$

Put
$$
A(\epsilon):=A_0+\epsilon^2 A_1,\qquad
B(\epsilon):=B_0+\epsilon B_1.
$$
Then
$$
\lim_{\epsilon\to 0}\overline{A(\epsilon)}=\P\pi(A_0,A_1),\qquad
\lim_{\epsilon\to 0}\overline{B(\epsilon)}=\P\pi(B_0,B_1).
$$
Hence
$$
(\lim_{\epsilon\to 0}\overline{A(\epsilon)})
(\lim_{\epsilon\to 0}\overline{B(\epsilon)})
=\P\pi(A_1B_0,A_0B_1).
$$
Since
$$
A(\epsilon)B(\epsilon)=A_0B_0+\epsilon A_0B_1+\epsilon^2A_1B_0+\epsilon^3A_1B_1
=\epsilon(A_0B_1+\epsilon A_1B_0),
$$
we have
$$
\lim_{\epsilon\to 0}\overline{A(\epsilon)B(\epsilon)}
=\P\pi(A_0B_1, A_1B_0).
$$
Hence
$$
(\lim_{\epsilon\to 0}\overline{A(\epsilon)})
(\lim_{\epsilon\to 0}\overline{B(\epsilon)})
\neq
\lim_{\epsilon\to 0}\overline{A(\epsilon)B(\epsilon)},
$$
and thus the product is not continuous.
\end{example}

\begin{remark}
By definition, it is obvious that the product
$$
PGL(V)\times \P M\ni (g, (\overline{A_0},\overline{A_1}, \ldots, 
\overline{A_m}))
\mapsto (g\overline{A_0},g\overline{A_1}, \ldots, g\overline{A_m})
\in \P M
$$
is continuous.
\end{remark}
\section{Hinges}
\label{sec:Hinges}

Let us recall the notion of hinges defined and studied by
Neretin \cite{Neretin98}, \cite{Neretin00}.

Let $P: V\rightrightarrows W$ be a linear relation,
i.e., a linear subspace of $V\oplus W$.
Let $p_1, p_2$ denote the projections from
$V\oplus W$ onto $V$ and $W$, respectively.
Then $\Dom(P):=p_1(P)$ is called
the domain of $P$, while $\Im(P):=p_2(P)$
is called the image of $P$.
In addition,
$\Ker(P):=p_1(P\cap (V\oplus 0))$ is called 
the kernel of $P$, while
$\Indef(P):=p_2(P\cap(0\oplus W))$ is called 
the indefiniteness of $P$.

A hinge
$$
\cP=(P_0,P_1,\ldots,P_m)
$$
is a sequence of linear relations
$P_j: V\rightrightarrows V$ of dimension $n=\dim V$
such that
\begin{eqnarray*}
&&\Ker(P_j)= \Dom(P_{j+1})\qquad (j=0,1,2,\ldots, m-1)\\
&&\Im(P_j)=\Indef(P_{j+1})\qquad (j=0,1,2,\ldots, m-1)\\
&&\Dom(P_0)=V, \\
&&\Im(P_m)=V,\\
&&P_j\neq \Ker(P_j)\oplus \Indef(P_j)\quad (j=0,1,2,\ldots, m).
\end{eqnarray*}

Let $\Hinge(V)$ denote the set of hinges.
Introduce an equivalence relation $\sim$ in $\Hinge(V)$
by
$$
(P_0,P_1,\ldots,P_m) \sim
(c_0P_0,c_1P_1,\ldots,c_mP_m)
\quad (c_0,c_1,\ldots, c_m\in \C^\times).
$$
Then let $\Hinge^*(V)$ denote the quotient space 
$\Hinge(V)/\!\sim$.
Neretin introduced a topology in $\Hinge^*(V)$,
and proved, among others, that
$\Hinge^*(V)$ is an irreducible projective variety,
and $PGL(V)$ is dense open in $\Hinge^*(V)$.

For $\cA=(A_0,A_1,\ldots, A_m)\in \widetilde{M}$ or $M$,
set
$$
\Im (\cA):=\sum_{i=0}^m \Im ({A_i}_{|K(\cA)_{i-1}}),
$$
and
\begin{eqnarray*}
&&\widetilde{M}_H
:=
\{ {\cA}\in \widetilde{M} \,|\,
\Im(\cA)=\oplus_{i=0}^m \Im ({A_i}_{|K(\cA)_{i-1}})=V
\},\\
&&{M}_H
:=
\{ {\cA}\in {M} \,|\,
\Im(\cA)=\oplus_{i=0}^m \Im ({A_i})=V
\}.
\end{eqnarray*}

We have
\begin{eqnarray*}
\dim V
&=&
\sum_{i\geq 0} (\dim K(\cA)_{i-1}-\dim K(\cA)_i)\\
&=&
\sum_{i\geq 0} \dim \Im {A_i}_{|K(\cA)_{i-1}}.
\end{eqnarray*}
Hence
\begin{eqnarray*}
&&\widetilde{M}_{H}=
\{ {\cA}\in \widetilde{M} \,|\,
\Im(\cA)= V
\}
=
\{ {\cA}\in \widetilde{M} \,|\,
\dim \Im(\cA)=\dim V
\},\\
&&{M}_{H}=
\{ {\cA}\in {M} \,|\,
\Im(\cA)= V
\}
=
\{ {\cA}\in {M} \,|\,
\dim \Im(\cA)=\dim V
\}.\\
\end{eqnarray*}

\begin{example}
\label{ex:M_HNonSemiGroup}
Let
$$
A_0=
\begin{bmatrix}
0 & 1\\
0 & 0
\end{bmatrix},
\quad
A_1=
\begin{bmatrix}
0 & 0\\
1 & 0
\end{bmatrix}
$$
and
$$
B_0=
\begin{bmatrix}
1 & 1\\
0 & 0
\end{bmatrix},
\quad
B_1=
\begin{bmatrix}
0 & 0\\
1 & -1
\end{bmatrix}.
$$
Then $(A_0,A_1), (B_0, B_1)\in \widetilde{M}_{H}$.
In $\widetilde{M}'$,
$$
(B_0, B_1)(A_0,A_1)=
(B_0A_0, B_1A_0, B_0A_1, B_1A_1).
$$
We have
$$
B_0A_0
=
\begin{bmatrix}
0 & 1\\
0 & 0
\end{bmatrix},
\quad
B_1A_0
=
\begin{bmatrix}
0 & 0\\
0 & 1
\end{bmatrix},
$$
$$
B_0A_1
=
\begin{bmatrix}
1 & 0\\
0 & 0
\end{bmatrix},
\quad
B_1A_1
=
\begin{bmatrix}
0 & 0\\
-1 & 0
\end{bmatrix}.
$$
Since $\Ker(B_0A_0)=\Ker(B_1A_0)=\langle \e_1\rangle$
and
$\Ker(B_0A_1)=\Ker(B_1A_1)=\langle \e_2\rangle$,
we have
$$
\Psi((B_0, B_1)(A_0,A_1))=
(B_0A_0, B_0A_1).
$$
Note that
$$
\Im (B_0A_0, B_0A_1)=\langle \e_1\rangle\neq V.
$$
Namely $(B_0, B_1)(A_0,A_1)\notin \widetilde{M}_{H}$.
Hence neither $\widetilde{M}_{H}$ nor $M_H$ is a
submonoid of $\widetilde{M}$
or $M$.
\end{example}

\begin{proposition}
\label{prop:PMvsHinge}
Let $\cA=(A_0,A_1,\ldots, A_m)\in \widetilde{M}_H$ or $M_H$.
Put
$$
P_i:=
\{ (x, A_ix)\in V\oplus V\,|\, x\in K(\cA)_{i-1}\}
+(\0\oplus \sum_{k=0}^{i-1}\Im({A_k}_{|K(\cA)_{k-1}})).
$$
Then
$\varphi(\cA):=(P_0, P_1,\ldots, P_m)\in \Hinge(V)$,
and
$\varphi$ is surjective.

Let $\P{M}_H$ be the image of ${M}_H$
under the map $\P: {M}\to \P M$.
Then clearly $\varphi: {M}_H\to \Hinge(V)$
induces the surjective map
$\P\varphi: \P{M}_H\to \Hinge^*(V)$.
\end{proposition}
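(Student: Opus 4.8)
Throughout write $W_i:=K(\cA)_{i-1}$ (so $W_0=V$, $W_{i+1}=\Ker(A_i|_{W_i})$, and $W_{m+1}=0$) and $I_i:=\sum_{k=0}^i\Im(A_k|_{W_k})$ (so $I_{-1}=0$ and $I_m=V$ by the hypothesis $\cA\in\widetilde{M}_H$ or $M_H$); the $_H$-hypothesis says precisely that these sums are direct, so that $\dim I_{i-1}=n-\dim W_i$ and $\Im(A_i|_{W_i})\cap I_{i-1}=0$. The plan is first to read off the four subspaces attached to each $P_i$. Projecting to the two factors gives immediately $\Dom(P_i)=W_i$, $\Im(P_i)=I_i$, and $\Indef(P_i)=I_{i-1}$. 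For $\Ker(P_i)$ I would argue: if $(x,A_ix+w)\in V\oplus 0$ with $x\in W_i$, $w\in I_{i-1}$, then $A_ix=-w\in\Im(A_i|_{W_i})\cap I_{i-1}=0$, whence $x\in\Ker(A_i|_{W_i})=W_{i+1}$; conversely $W_{i+1}\oplus 0\subseteq P_i$, so $\Ker(P_i)=W_{i+1}$. With these four identities the hinge relations $\Ker(P_j)=\Dom(P_{j+1})$, $\Im(P_j)=\Indef(P_{j+1})$, $\Dom(P_0)=V$, $\Im(P_m)=V$ are immediate. For the remaining requirements I would note that $P_i$ is the sum of the graph of $A_i|_{W_i}$ and $0\oplus I_{i-1}$, which meet only in $0$, so $\dim P_i=\dim W_i+\dim I_{i-1}=n$; and $\Ker(P_i)\oplus\Indef(P_i)=W_{i+1}\oplus I_{i-1}\subseteq P_i$ is a proper subspace since $W_{i+1}\subsetneq W_i$, i.e. $A_i|_{W_i}\neq 0$, which is part of the definition of $\widetilde{M}$ and $M$. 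This shows $\varphi(\cA)\in\Hinge(V)$.

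For surjectivity I would take an arbitrary hinge $\cP=(P_0,\dots,P_m)$, put $W_j:=\Dom(P_j)$ and $I_{j-1}:=\Indef(P_j)$, and first extract two consequences of $\dim P_j=n$: from $\dim P_j=\dim\Dom(P_j)+\dim\Indef(P_j)$ and $\Dom(P_0)=V$ one gets $\Indef(P_0)=0$, i.e. $I_{-1}=0$; from $\dim P_j=\dim\Im(P_j)+\dim\Ker(P_j)$ and $\Im(P_m)=V$ one gets $\Ker(P_m)=0$, i.e. $W_{m+1}=0$. Since $0\oplus I_{j-1}=0\oplus\Indef(P_j)\subseteq P_j$, two elements $(x,y),(x,y')\in P_j$ differ by $(0,y-y')\in P_j\cap(0\oplus V)$, so $y-y'\in\Indef(P_j)=I_{j-1}$; hence $P_j$ descends to the graph of a well-defined linear map $\bar A_j\colon W_j\to V/I_{j-1}$, whose image is the image of $\Im(P_j)=I_j$, namely $I_j/I_{j-1}$, and whose kernel is $\Ker(P_j)=W_{j+1}$. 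I would then choose subspaces $C_j$ with $I_j=I_{j-1}\oplus C_j$, so that $V=I_m=C_0\oplus\cdots\oplus C_m$, and lift $\bar A_j$ through the isomorphism $C_j\xrightarrow{\ \sim\ }I_j/I_{j-1}$ to obtain $A_j\colon W_j\to C_j\subseteq V$ with $\Im(A_j)=C_j$ and $\Ker(A_j)=W_{j+1}$; moreover $A_j\neq 0$ because $W_{j+1}\subsetneq W_j$, for otherwise $W_j\oplus I_{j-1}\subseteq P_j$ would have dimension $n$, forcing $P_j=\Ker(P_j)\oplus\Indef(P_j)$ against the last hinge axiom. Thus $\cA:=(A_0,\dots,A_m)$ satisfies $V(\cA)_i=W_i$ and lies in $M_H$ (directness of $\bigoplus C_j=V$), and $P_i(\cA)$ both contains $0\oplus I_{i-1}$ and descends to the same graph $\bar A_i$ as $P_i$, hence $P_i(\cA)=P_i$; so $\varphi(\cA)=\cP$. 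Extending each $A_j$ by $0$ on a complement of $W_j$ then produces a preimage in $\widetilde{M}_H$ as well.

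For the projective statement I would observe that rescaling each $A_k$ does not change $I_{i-1}=\sum_{k<i}\Im(A_k|_{W_k})$, so replacing $\cA$ by another representative of $\P\cA$ replaces each $P_i$ by $c_iP_i$; hence $\varphi$ descends to a well-defined map $\P\varphi\colon\P M_H\to\Hinge^*(V)$, which is surjective because $\varphi$ is.

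The routine part is the first paragraph: once the two uses of directness are in place ($\Im(A_i|_{W_i})\cap I_{i-1}=0$ for $\Ker(P_i)$, and $\dim I_{i-1}=n-\dim W_i$ for the dimension count), everything is just unwinding definitions. The main obstacle I expect is the surjective direction — organizing the lift of the $\bar A_j$ so that their images sit in a direct sum, and verifying that the assembled $\cA$ genuinely lies in $M_H$ with the correct length and no vanishing component. The point that makes this go through is that every potential degeneracy (such as $A_j=0$, or $W_j=W_{j+1}$, or $I_j=I_{j-1}$ with $W_j\neq W_{j+1}$) is ruled out by combining $\dim P_j=n$ with the nondegeneracy axiom $P_j\neq\Ker(P_j)\oplus\Indef(P_j)$.
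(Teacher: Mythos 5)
Your proposal is correct and follows essentially the same route as the paper: identify $\Dom$, $\Im$, $\Ker$, $\Indef$ of each $P_i$, verify $\dim P_i=n$ (the paper does this by induction, you do it directly from the directness of $\bigoplus_k\Im(A_k|_{K(\cA)_{k-1}})$), and for surjectivity choose complements $\Im(P_i)=R_i\oplus\Im(P_{i-1})$ and define $A_i$ by the graph property exactly as the paper does. The extra details you supply (why $A_j\neq 0$, why the assembled $\cA$ lies in $M_H$, and the descent to $\P\varphi$) are points the paper leaves as "easy to see," and they check out.
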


\begin{proof}
We prove that
$\dim P_i= \dim V=n$ by induction on $i$.

We have $\dim P_0=n$, since $K(\cA)_{-1}=V$.
Let $i\geq 0$.
We have
\begin{eqnarray*}
\dim P_{i+1}
&=& 
\dim K(\cA)_{i} +\sum_{k=0}^{i}\dim \Im({A_k}_{|K(\cA)_{k-1}})\\
&=&
(\dim K(\cA)_{i-1}-\dim \Im {A_{i}}_{|K(\cA)_{i-1}})\\
&&\qquad
+\sum_{k=0}^{i}\dim \Im({A_k}_{|K(\cA)_{k-1}})\\
&=& 
\dim K(\cA)_{i-1} +\sum_{k=0}^{i-1}\dim \Im({A_k}_{|K(\cA)_{k-1}})\\
&=& \dim P_{i}.
\end{eqnarray*}
The other assertions for $\varphi(\cA)$ to be a hinge are obvious.

Next we prove that $\varphi$ is surjective.
Let $(P_0,P_1,\ldots, P_m)\in \Hinge(V)$.
Then
there exist two partial flags:
\begin{eqnarray*}
&&V=\Dom(P_0)\supseteq \Dom(P_1)\supseteq
\cdots \Dom(P_m)\supseteq 0\\
&&0\subseteq \Im(P_0)\subseteq \Im(P_1)\subseteq\cdots
\subseteq \Im(P_m)=V.
\end{eqnarray*}
Put $V_i:=\Dom(P_i)=\Ker(P_{i-1})$.
Fix a subspace $R_i$ of $\Im(P_i)$ such that
$$
\Im(P_i)=R_i\oplus \Im(P_{i-1}).
$$
Then for any $x\in V_i=\Dom(P_i)$
there exists a unique element $A_i(x)\in R_i$
such that $(x, A_i(x))\in P_i$,
since $\Indef(P_i)=\Im(P_{i-1})$.
Then $A_i:V_i\to R_i\subseteq V$ is a linear operator.
It is easy to see that 
$\cA:=(A_0,A_1,\ldots,A_m)\in M_H$ and
$\varphi(\cA)=(P_0,P_1,\ldots, P_m)$.
\end{proof}

\begin{example}
Clearly $GL(V)\subseteq {M}_H$, 
$PGL(V)\subseteq \P{M}_H$, and
$\P\varphi_{|PGL(V)}=id_{PGL(V)}$.
Here note that in $\Hinge(V)$ each $g\in GL(V)$ is identified
with its graph $\{ (\x, g\x)\,|\, \x\in V\}$.
\end{example}

\begin{example}
\label{ex:HingeVSMine}
Let $V=\C^2=\C\e_1\oplus \C\e_2$.
Let
$$
A_0:=
\begin{bmatrix}
1 & 0\\
0 & 0
\end{bmatrix},
\quad
A_1:=
\begin{bmatrix}
0 & 1\\
0 & 0
\end{bmatrix},
\quad
A_2:=
\begin{bmatrix}
0 & 0\\
0 & 1
\end{bmatrix}.
$$
Then $(A_0,A_1)\in \widetilde{M}\setminus \widetilde{M}_H$,
and $\cA:=(A_0, A_2)\in \widetilde{M}_H$.

We have $\varphi(\cA)=(P_0,P_2)$ with
\begin{eqnarray*}
&&P_0=\graph(A_0)=\{ (\x, A_0\x)\}
=\{ (a\e_1+b\e_2, a\e_1)\,|\, a\in \C \}
,\\
&&P_2=\{ c(\e_2, \e_2)\,|\, c\in \C\}
+\{ (0, a\e_1)\,|\, a\in \C\}.
\end{eqnarray*}

Let 
$$
A(\epsilon):=A_0+A_1\epsilon+A_2\epsilon^2
=
\begin{bmatrix}
1 & \epsilon\\
0 & \epsilon^2
\end{bmatrix}.
$$
Then $A(\epsilon)\in GL(V)$ for $\epsilon\neq 0$.
We have
$$
\lim_{\epsilon\to 0}\P\pi(A(\epsilon))=
\P\pi(A_0,{A_1})\quad \text{in $\P M$}
$$
as in the proof of Proposition \ref{prop:dense},
while
\begin{eqnarray*}
\lim_{\epsilon\to 0}\P\varphi\pi(A(\epsilon))
&=&
\lim_{\epsilon\to 0}\P
\{ (a\e_1+b\e_2, (a+b\epsilon)\e_1+b\epsilon^2\e_2)\,|\, a,b\in \C\}\\
&=&
\P(P_0,P_2)
\end{eqnarray*}
in $\Hinge^*(V)$.
\end{example}

\begin{lemma}
\label{lem:PA=PB}
Let $A,B\in \Hom(W,V)\setminus\{ 0\}$, and
$W_1:=\Ker(A), W_2:=\Ker(B)$.
Suppose that $\overline{A}=\overline{B}$ on $\P(W)\setminus \P(W_1)\cup\P(W_2)$.
\begin{enumerate}
\item[\rm (1)]
If $W_1=W_2$, then
there exists $c\neq 0$ such that $A=cB$.
\item[\rm (2)]
If $W_1\neq W_2$, then
$W=W_1+W_2$, $\Im(A)=\Im(B)$, and $\rank(A)=\rank(B)=1$.
\end{enumerate}
\end{lemma}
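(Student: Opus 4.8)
The plan is to reformulate the hypothesis in exterior-algebra terms and then polarize. The condition that $\overline{A}=\overline{B}$ on $\P(W)\setminus(\P(W_1)\cup\P(W_2))$ is equivalent to the single identity $Ax\wedge Bx=0$ in $\bigwedge^2 V$ for \emph{all} $x\in W$: for $x\in W_1\cup W_2$ one of $Ax,Bx$ vanishes, so the identity is automatic, while for $x\notin W_1\cup W_2$ both vectors are nonzero and $Ax\wedge Bx=0$ says precisely $\overline{Ax}=\overline{Bx}$. Replacing $x$ by $x+y$ and using bilinearity of $\wedge$, I get the polarized identity $Ax\wedge By+Ay\wedge Bx=0$ for all $x,y\in W$. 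These two identities are the only inputs I will use.

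For part (1), with $W_1=W_2=:W_0$, the maps $A,B$ induce injective maps $\bar A,\bar B\colon W/W_0\hookrightarrow V$. If $\dim(W/W_0)\le 1$ the claim is immediate (the case $\dim=0$ being excluded since $A\neq 0$), so suppose $\dim(W/W_0)\ge 2$. For linearly independent $\bar x,\bar y$ I can write $\bar A\bar x=c_x\bar B\bar x$ and $\bar A\bar y=c_y\bar B\bar y$ with $c_x,c_y\in\C^\times$, since in each case the two vectors are nonzero and proportional; the polarized identity then gives $(c_x-c_y)\,\bar B\bar x\wedge\bar B\bar y=0$, and $\bar B\bar x\wedge\bar B\bar y\neq 0$ by injectivity of $\bar B$, so $c_x=c_y$. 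Hence $c_x$ equals a constant $c$ independent of $\bar x\neq0$, and expanding an arbitrary $\bar x$ in a basis gives $\bar A=c\bar B$, i.e. $A=cB$.

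For part (2), the statement is symmetric under swapping $A$ and $B$ (which swaps $W_1$ and $W_2$), so I may assume $W_1\not\subseteq W_2$ and choose $x_0\in W_1\setminus W_2$, so that $Ax_0=0$ and $Bx_0\neq 0$. Setting $y=x_0$ in the polarized identity yields $Ax\wedge Bx_0=0$ for every $x\in W$, hence $Ax\in\C\,Bx_0$ for all $x$; since $A\neq 0$ this forces $\rank A=1$ with $\Im A=\C\,Bx_0$. I then write $A=\ell_A\otimes w$ with $w:=Bx_0$ and $\Ker\ell_A=W_1$ (a hyperplane), and substitute back into $Ax\wedge Bx=0$ to get $\ell_A(x)\,(w\wedge Bx)=0$ for all $x$; since the linear map $x\mapsto w\wedge Bx$ vanishes on the complement of the hyperplane $W_1$, which spans $W$, it vanishes identically. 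Thus $Bx\in\C\,w$ for all $x$, so $\rank B=1$ and $\Im B=\C\,w=\Im A$; finally $W_1=\Ker A$ and $W_2=\Ker B$ are two distinct hyperplanes of $W$, whence $W_1+W_2=W$.

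Once the $\bigwedge^2$ reformulation and its polarization are in hand the argument is short, so I do not expect a genuine obstacle; the points that need care are the density remark that the complement of a hyperplane in $V$ spans $V$ (used to promote $w\wedge Bx=0$ from a dense set to all of $W$), the symmetry reduction to $W_1\not\subseteq W_2$ in part (2), and keeping the low-dimensional case $\dim(W/W_0)\le 1$ of part (1) separate.
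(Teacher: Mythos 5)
Your proof is correct. The reformulation of the hypothesis as $Ax\wedge Bx=0$ for all $x\in W$ is valid (the boundary cases $x\in W_1\cup W_2$ are indeed automatic), the polarization $Ax\wedge By+Ay\wedge Bx=0$ follows by bilinearity, and each subsequent step checks out: in (1) the injectivity of $\bar B$ on $W/W_0$ makes $\bar B\bar x\wedge\bar B\bar y\neq0$ for independent classes, forcing the constants to agree; in (2) setting $y=x_0\in W_1\setminus W_2$ immediately gives $\Im A=\C\,Bx_0$ and rank one, and the spanning of $W$ by the complement of the hyperplane $W_1$ legitimately upgrades $w\wedge Bx=0$ to all of $W$. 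Your route is genuinely different in packaging from the paper's. The paper argues directly with proportionality constants on explicitly chosen direct-sum decompositions: in (1) it fixes $W=W_1\oplus U$, a basis $\{y_i\}$ of $U$, and compares $A(y_i+y_j)$ with $B(y_i+y_j)$; in (2) it writes $W=(W_1+W_2)\oplus U$ and $W_1=(W_1\cap W_2)\oplus W_1'$, derives a contradiction from a nonzero $u\in U$ paired with $w_1\in W_1'$ to conclude $U=0$, and only then extracts the rank-one and image statements from vectors of the form $w_1+w_2$. Your polarized wedge identity encodes all of these pairwise comparisons at once, so part (2) in particular collapses to a two-line computation and avoids the contradiction argument and the auxiliary decompositions entirely; what the paper's version buys in exchange is that it never leaves the language of proportional vectors, at the cost of more case analysis. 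Both arguments are elementary and complete.
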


\begin{proof}
(1)\quad
Let $W=W_1\oplus U$.
Let $\{ y_i\}$ be a basis of $U$.
Note that $A_{|U}$ and $B_{|U}$ are injective.
There exist $c_i\neq 0$ such that $Ay_i=c_iBy_i$.
Let $i\neq j$.
Then $A(y_i+y_j)=c_iBy_i+c_jBy_j$ must be a nonzero multiple
of $B(y_i+y_j)$. Say
$A(y_i+y_j)=c_iBy_i+c_jBy_j=cB(y_i+y_j)$.
Hence $B((c-c_i)y_i+(c-c_j)y_j)=0$.
Since $B_{|U}$ is injective,
we have $c_i=c=c_j$.
Hence $A=cB$.

(2)\quad
Let $W=(W_1+W_2)\oplus U$.
As in (1), we have $A_{|U}=cB_{|U}$ for some $c\neq 0$.
Let $W_1=W_1\cap W_2\oplus W'_1$.
Suppose that $W'_1\neq 0$.
For $0\neq w_1\in W'_1$ and $0\neq u\in U$,
we have
$A(w_1+u)=Au=cBu$.
Since $u\neq 0$, $A(w_1+u)$ must be a nonzero multiple
of $B(w_1+u)$. Say $A(w_1+u)=dB(w_1+u)$.
Then
$B(dw_1+(d-c)u)=0$.
Since $B_{|W'_1+U}$ is injective,
$c=d=0$.
Hence $U=0$, or $W=W_1+W_2$.

Let $W_2=W_1\cap W_2\oplus W'_2$.
Similarly, if $W'_2\neq 0$, then $W=W_1+W_2$.
Since $W_1\neq W_2$, we have $W'_1\neq 0$ or $W'_2\neq 0$, and hence
$W=W_1+W_2$.

For $0\neq w_1\in W'_1$ and $0\neq w_2\in W'_2$,
$A(w_1+w_2)=A(w_2)$ is a nonzero multiple of
$B(w_1+w_2)=B(w_1)$.
Hence $\Im(A)=\Im(B)$, and $\rank(A)=\rank(B)=1$.
\end{proof}

\begin{proposition}
Let $\P\cA\in \P{M}_H$ and
$\P\cB\in \P{M}$ satisfy
$\Phi(\P\cA)=\Phi(\P\cB)$.
Then $\P\cA=\P\cB$.
\end{proposition}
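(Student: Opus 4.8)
The plan is to prove, by induction on $j\ge 0$, that $\overline{A_j}=\overline{B_j}$ for every $j$ for which both sides are defined, after fixing representatives $\cA=(A_0,\dots,A_m)\in M_H$ and $\cB=(B_0,\dots,B_{m'})\in M$. This is enough: equality of the first $j$ terms forces $V(\cA)_l=V(\cB)_l$ for all $l\le j+1$, and if the lengths differed, say $m<m'$, then $V(\cB)_{m+1}=V(\cA)_{m+1}=0$, contradicting the fact that $B_{m+1}$ is a nonzero map out of $V(\cB)_{m+1}$; hence $m=m'$ and $\P\cA=\P\cB$.

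For the inductive step, assume $\overline{A_l}=\overline{B_l}$ for all $l<j$, so that $V(\cA)_l=V(\cB)_l=:V_l$ for $l\le j$ and $A_j,B_j\in\Hom(V_j,V)$. From the definition of $\Phi$, for $x\in V_j$ outside $\Ker A_j=V(\cA)_{j+1}$ one has $\Phi(\P\cA)(\overline x)=\overline{A_jx}$, and similarly $\Phi(\P\cB)(\overline x)=\overline{B_jx}$ for $x\in V_j\setminus\Ker B_j$. Since $\Phi(\P\cA)=\Phi(\P\cB)$, the maps $\overline{A_j}$ and $\overline{B_j}$ agree on $\P(V_j)\setminus\bigl(\P\Ker A_j\cup\P\Ker B_j\bigr)$, so I would apply Lemma~\ref{lem:PA=PB} with $W=V_j$. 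In case~(1) we get $\overline{A_j}=\overline{B_j}$ and the induction closes; the real work is to exclude case~(2), namely that $\Ker A_j$ and $\Ker B_j$ are distinct subspaces of $V_j$ with $V_j=\Ker A_j+\Ker B_j$, $\Im A_j=\Im B_j=:\langle v\rangle$, and $\rank A_j=\rank B_j=1$.

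Excluding case~(2) is the heart of the matter, and it is exactly where the hypothesis $\cA\in M_H$ is used. Assume case~(2); since $\rank A_j=1$ we must have $\dim V_j\ge 2$ (otherwise both kernels are $0$, hence equal). Because $\cA\in M_H$ we have $\Im A_j\cap\sum_{i\ne j}\Im A_i=0$, and this directness pins down the fibre over $\overline v$: one checks that $\Phi(\P\cA)(\overline y)=\overline v$ holds precisely for $y\in V_j\setminus\Ker A_j$. Indeed, if $y\in V(\cA)_i\setminus V(\cA)_{i+1}$ with $i\ne j$, then $0\ne A_iy\in\Im A_i$ cannot be a multiple of $v\in\Im A_j$, while for $i=j$ every such $y$ maps to $\overline v$ since $\Im A_j=\langle v\rangle$. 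On the other hand, $\Ker A_j$ and $\Ker B_j$ being distinct of the common dimension $\dim V_j-1$, there is a nonzero $x\in\Ker A_j\setminus\Ker B_j$; then $\Phi(\P\cB)(\overline x)=\overline{B_jx}\in\P\Im B_j=\{\overline v\}$, whereas the fibre characterization gives $\Phi(\P\cA)(\overline x)\ne\overline v$ because $x\in\Ker A_j$. This contradicts $\Phi(\P\cA)=\Phi(\P\cB)$, so case~(2) cannot occur and the induction goes through.

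Beyond this, I expect only routine verifications: that equal lower terms genuinely make $A_j$ and $B_j$ maps on the same space $V_j$ (so Lemma~\ref{lem:PA=PB} applies verbatim), and the bookkeeping that, among $x\in V_j$, sorts out the strata $V(\cA)_i\setminus V(\cA)_{i+1}$ they occupy. The one essential ingredient is the directness of $\bigoplus_i\Im A_i$ for $\cA\in M_H$, used above to compute the fibre of $\Phi(\P\cA)$ over $\overline v$; without it the statement fails, as Example~\ref{ex:MneqMtilde} already shows.
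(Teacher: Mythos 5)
Your proof is correct and follows essentially the same route as the paper: induct on the index, reduce via Lemma~\ref{lem:PA=PB} to excluding its case~(2), and rule that case out using the directness of $\bigoplus_i\Im(A_i)$ coming from $\cA\in M_H$. The only (harmless) difference is cosmetic: you test a vector $x\in\Ker A_j\setminus\Ker B_j$ against a fibre description of $\Phi(\P\cA)$ over $\overline v$, whereas the paper compares $\overline{B_j}$ with $\overline{A_{j+1}}$ on $\Ker(A_j)\setminus(\Ker(B_j)\cup\Ker(A_{j+1}))$ to force $\Im(A_{j+1})$ to meet $\Im(A_j)$ — both contradictions rest on the same directness.
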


\begin{proof}
Let $\P\cA=(\overline{A_0},\overline{A_1},\ldots)$
and
$\P\cB=(\overline{B_0},\overline{B_1},\ldots)$.
First we prove $\overline{A_0}=\overline{B_0}$.
Suppose the contrary.
Then, by Lemma \ref{lem:PA=PB},
$\Ker(A_0)\neq \Ker(B_0)$,
$\Im(A_0)=\Im(B_0)$, and $\rank(A_0)=\rank(B_0)=1$.
In particular, $\Ker(A_0)\not\subseteq \Ker(B_0)$,
since $\dim\Ker(A_0)=\dim\Ker(B_0)$.

By $\Phi(\P\cA)=\Phi(\P\cB)$ we have
$$
\overline{{B_0}}_{|\Ker(A_0)\setminus \Ker(B_0)\cup \Ker(A_1)}
=\overline{{A_1}}_{|\Ker(A_0)\setminus\Ker(B_0)\cup \Ker(A_1)}.
$$
Hence we have
$\Im(A_0)=\Im(B_0)=\Im(A_1)$,
which contradicts the assumption that $\P\cA\in \P{M}_H$.
We have thus proved $\overline{A_0}=\overline{B_0}$.

Do the same arguments to have $\overline{A_1}=\overline{B_1}$,
and so on.
\end{proof}

\section{the mapping $\lambda$}

Let $\cA=(A_0, A_1, \ldots, A_m)\in M$,
where $A_i\in \Hom(V(\cA)_{i}, V)$.
Let $V(\cA)'_{i}$ be the orthogonal complement of 
$V(\cA)_{i+1}=K(\cA)_{i}$
in $V(\cA)_i=K(\cA)_{i-1}$;
$V(\cA)_{i}=V(\cA)'_{i}\oplus V(\cA)_{i+1}$.

Let $r_i:=\dim(V(\cA)'_{i})=\rank(A_i)$, and $r_{-1}=0$.
For $r_0+r_1+\cdots+r_{j-1}<k\leq r_0+r_1+\cdots+r_j$,
define $\bigwedge^k\cA\in \End(\bigwedge^k V)$ as the
restriction on
$\bigwedge^{r_0} V(\cA)'_0\otimes \bigwedge^{r_1} V(\cA)'_1\otimes
\cdots\otimes \bigwedge^{k-(r_0+\cdots+r_{j-1})} V(\cA)'_j$
of
$$
\bigwedge^{k} ({A_0}_{| V(\cA)'_0}\oplus {A_1}_{| V(\cA)'_1}\oplus
\cdots \oplus {A_m}_{| V(\cA)'_m}),
$$
where we regard
$$
\Hom(\bigwedge^{r_0} V(\cA)'_0\otimes \bigwedge^{r_1} V(\cA)'_1\otimes
\cdots\otimes \bigwedge^{k-(r_0+\cdots+r_{j-1})} V(\cA)'_j, 
\bigwedge^k V)
\subseteq \End(\bigwedge^k V),
$$
since 
$\bigwedge^{r_0} V(\cA)'_0\otimes \bigwedge^{r_1} V(\cA)'_1\otimes
\cdots\otimes \bigwedge^{k-(r_0+\cdots+r_{j-1})} V(\cA)'_j$
is a direct summand of $\bigwedge^k V$
according to the decomposition
$$
V=\bigoplus_{i=0}^m V(\cA)'_i.
$$

In particular, $\bigwedge^n\cA\in \End(\bigwedge^n V)=
\C id_{\bigwedge^n V}$.
We define $\Det(\cA)\in \C$ by
$\bigwedge^n\cA=\Det(\cA)id_{\bigwedge^n V}$.
Clearly $\cA\in M_H$ if and only if $\Det(\cA)\neq 0$.

Define a map $\lambda: M\to
\prod_{k=1}^{n}\End(\bigwedge^k V)$ by
$$
\lambda:M \ni \cA
\mapsto
(\bigwedge^1\cA, \bigwedge^2\cA,\ldots,\bigwedge^{n}\cA)
\in \prod_{k=1}^{n}\End(\bigwedge^k V).
$$

\begin{remark}
Example \ref{ex:M_HNonSemiGroup} shows that
$\lambda$ is not a monoid homomorphism.
Clearly we have
$$
\lambda(g \cA h)=\lambda(g)\lambda(\cA)\lambda(h)
\quad \text{for all $g,h\in GL(V),\, \cA\in M$.}
$$
\end{remark}

Note that $\bigwedge^k\cA\neq 0$ for any $k$ if $\cA\in {M}_H$.
Hence we can define
$$
\overline{\lambda}:\P{M}_H\ni \cA
\mapsto
(\overline{\bigwedge^1\cA}, \overline{\bigwedge^2\cA},\ldots,
\overline{\bigwedge^{n-1}\cA})
\in \prod_{k=1}^{n-1}\P\End(\bigwedge^k V).
$$

\begin{theorem}
\label{thm:lambdaConti}
$\P{M}_H$ is open in $\P{M}$, and
$\overline{\lambda}$ is continuous.
\end{theorem}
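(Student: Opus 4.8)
The plan is to prove the two assertions essentially together, by exhibiting, for each $\cA \in M_H$, a basic neighborhood $U_{\P\cA}(U_0,\dots,U_m)$ of $\P\cA$ on which both statements hold: every $\P\cB$ in it lies in $\P M_H$, and the map $\overline\lambda$ restricted to it is given by explicit continuous formulas. The key observation driving everything is Lemma~\ref{rem:Uniqueness}: for $\P\cB \in U_{\P\cA}(U_0,\dots,U_m)$, the index $j=j(i)$ with $V(\cB)_j \supseteq V(\cA)_i$ and $\overline{{B_j}_{|V(\cA)_i}} \in U_i$ is unique, and it is the maximal such $j$. First I would analyze this correspondence $i \mapsto j(i)$: it is weakly increasing in $i$, and by a dimension count using $\dim V = \sum_i \dim\Im({A_i}_{|K(\cA)_{i-1}})$ together with $\cA \in M_H$, I expect that for suitable shrinking of the $U_i$ the map is a bijection between $\{0,\dots,m\}$ and $\{0,\dots,m'\}$ (so $m'=m$), with $V(\cB)_{j(i)} = V(\cA)_i$ exactly, and with $\rank(B_{j(i)} ) = \rank(A_i)$ and the flag of $\cB$ refining to match that of $\cA$. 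The point is that $\P M_H$ is cut out near $\P\cA$ by open conditions on the ranks of the restrictions $B_j|_{V(\cA)_i}$, which are lower-semicontinuous, hence forced to stay $\geq r_i$ on a neighborhood, while the sum of ranks cannot exceed $n$; so equality is forced and $\Det(\cB) \neq 0$. This gives openness of $\P M_H$.

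Next, for continuity of $\overline\lambda$, I would work in coordinates adapted to the decomposition $V = \bigoplus_i V(\cA)'_i$. For $\P\cB$ in a small enough neighborhood as above, the spaces $V(\cB)'_{j(i)}$ are graphs of small linear maps over $V(\cA)'_i$ (since the relevant Grassmannian coordinates vary continuously with $\overline{B_j}$), and the operator ${B_{j(i)}}_{|V(\cB)'_{j(i)}}$, read off via these graph coordinates, depends continuously on the chosen representatives chosen consistently with the $U_i$. Since $\bigwedge^k\cB$ is built by the recipe in Section~8 as a restriction of an exterior power of $\bigoplus_i {B_{j(i)}}_{|V(\cB)'_{j(i)}}$, and exterior powers and restrictions to (continuously varying) direct summands are continuous operations, the map $\P\cB \mapsto \bigwedge^k\cB$ is continuous up to scalar — which is all that is needed for $\overline\lambda$ into $\prod_k \P\End(\bigwedge^k V)$. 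Here I would use the remark that $\lambda(g\cA h) = \lambda(g)\lambda(\cA)\lambda(h)$ only as a sanity check, not as an ingredient.

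The main obstacle I anticipate is the bookkeeping in the first paragraph: controlling how an arbitrary nearby $\P\cB$ — whose length $m'$ and whose flag $V(\cB)_\bullet$ are a priori unrelated to those of $\cA$ — is forced, by the $U_i$-conditions plus the $M_H$ rank constraint, to have the "same shape" as $\cA$. One has to rule out, e.g., $V(\cB)_j$ strictly between consecutive terms of $V(\cA)_\bullet$, or extra terms in the flag of $\cB$ on which the restrictions would have to carry positive rank (violating $\sum \rank = n$). I expect this to follow cleanly once one notes that $i \mapsto j(i)$ is injective (distinct $V(\cA)_i$ cannot share the same $j$, again by Lemma~\ref{rem:Uniqueness} applied with the rank conditions) and that consecutive $V(\cA)_i$, $V(\cA)_{i+1}$ must map to consecutive $V(\cB)_j$, $V(\cB)_{j+1}$ because any intermediate $V(\cB)_{j'}$ would contribute rank that is not accounted for in $V$. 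Once the shape is pinned down, the passage to continuity of $\overline\lambda$ is a routine continuity-of-linear-algebra argument, and the fact that $\P M_H$ is open is immediate from $\Det(\cB) \neq 0$ on the neighborhood.
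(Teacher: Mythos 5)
There is a genuine error at the heart of your plan: the ``shape-matching'' claim in your first paragraph is false, and in fact backwards. You assert that after shrinking the $U_i$, every $\P\cB\in U_{\P\cA}(U_0,\dots,U_m)$ has $m'=m$, $i\mapsto j(i)$ a bijection, and $V(\cB)_{j(i)}=V(\cA)_i$. But $PGL(V)$ is dense in $\P M$ (Proposition \ref{prop:dense}), so every such neighborhood contains points $\P\cB=\overline{B_0}$ with $B_0\in GL(V)$, i.e.\ $m'=0$ and trivial flag; for these, $j(i)=0$ for all $i$, and the conditions $\overline{{B_0}_{|V(\cA)_i}}\in U_i$ are perfectly compatible (take $B_0=A_0+\epsilon A_1+\epsilon^2A_2+\cdots$ as in the proof of Proposition \ref{prop:dense}). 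The correct picture, which the paper establishes, is the opposite: the flag of $\cA$ \emph{refines} the flag of $\cB$, i.e.\ $i\mapsto j(i)$ is weakly increasing and constant on consecutive blocks, and each $V(\cB)_{j}$ coincides with some $V(\cA)_{k_s}$. Your subsidiary claims (``$i\mapsto j(i)$ is injective'', ``consecutive $V(\cA)_i$ map to consecutive $V(\cB)_j$'') fail for the same reason. Moreover, even establishing $\Ker(B_{j_s})=V(\cA)_{k_s}$ exactly (rather than merely $\supseteq$) is nontrivial and is where the hypothesis $\cA\in M_H$ enters; the paper gets it from the nonvanishing of the $n$-th exterior power (\eqref{eqn:FullImageBl} and the induction proving \eqref{eq:VA=VB}).

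The rank-counting argument for openness is also not sound as stated. The quantity $\sum_j\rank({B_j}_{|V(\cB)_j})=\sum_j(\dim V(\cB)_j-\dim V(\cB)_{j+1})$ equals $n$ for \emph{every} $\cB\in M$, so ``the sum of ranks cannot exceed $n$, hence equality is forced'' proves nothing: membership in $M_H$ is not a condition on the individual ranks but on the \emph{joint independence} of the images $\Im({B_j}_{|V(\cB)_j})$ inside $V$. Lower semicontinuity of the individual $\rank({B_{j(i)}}_{|V(\cA)_i})$ does not yield this; one needs the single open condition $\bigwedge^n\cB\neq 0$, i.e.\ that a top exterior power built from the $B_{j_s}$'s on the pieces $V(\cA)'_{k_{s-1}}\oplus\cdots\oplus V(\cA)'_{k_s-1}$ converges projectively to $\bigwedge^n\cA\neq 0$ --- which is exactly the paper's argument. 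Finally, your continuity step is predicated on $V(\cB)'_{j(i)}$ being a graph of a small map over $V(\cA)'_i$; since in general $V(\cB)'_{j_s}=V(\cA)'_{k_{s-1}}\oplus\cdots\oplus V(\cA)'_{k_s-1}$ has larger dimension than any single $V(\cA)'_i$, that step collapses along with the shape claim. The proposal would need to be rebuilt around the correct coarsening relation between the two flags and the exterior-power convergence.
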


\begin{proof}
Let $\cA=(A_0,A_1,\ldots, A_m)\in M_H$.
Let $\cB^{(l)}$ be a sequence in $M$
such that
$\P\cB^{(l)}$ converges to $\P\cA$.

As in the proof of Theorem \ref{thm:Sequence},
we can take a subsequence 
$\P\cB^{(l')}$ converging to
some $\P\cA'=\P(A'_0,A'_1,\ldots, A'_{m'})\in \P M$,
$0=j_0<j_1<\cdots<j_p$,
and $0=k_{-1}< k_0< k_1< \cdots < k_p$
such that for each $s=0,1,\ldots, p$
\begin{enumerate}
\item[(a)]
$
\lim_{l'\to \infty}
\overline{{B_{j_s}}^{(l')}_{|V(\cA')_k}}=
\overline{A'_k}
\qquad (k_{s-1}\leq k< k_s),
$
\item[(b)]
$V(\cA')_{k_s}\subseteq \Ker(B_{j_s}^{(l')})$.
\end{enumerate}

Since $\P\cB^{(l)}$ converges to $\P\cA$,
we have $\P\cA'=\P\cA$ and $k_p=m+1$.
To simplify the notation,
we put
$$
V(i):=V(\cA)_i,\quad
V(i)':=V(\cA)'_i
$$
in this proof.

We thus have
\begin{eqnarray}
&&
\lim_{l\to \infty}
\overline{{B_{j_s}}^{(l)}_{|V(k)}}=
\overline{A_k}
\qquad (k_{s-1}\leq k< k_s),\\
&&
\label{eq:ks-js}
V(k_s)\subseteq \Ker(B_{j_s}^{(l)}).
\end{eqnarray}

Then
the
restrictions on
$$
\bigwedge^{r_0} V(0)'\otimes \bigwedge^{r_1} V(1)'\otimes
\cdots\otimes \bigwedge^{k-(r_0+\cdots+r_{j-1})} V(j)'
$$
of some nonzero scalar multiples of
$$
\bigwedge^{k} ({B^{(l)}_{j_0}}_{| V(0)'\oplus\cdots\oplus 
V(k_0-1)'}\oplus
\cdots \oplus {B^{(l)}_{j_p}}_{| V(k_{p-1})'\oplus\cdots\oplus 
V(k_p-1)'})
$$
converge to $\bigwedge^k\cA$,
the
restriction on
$\bigwedge^{r_0} V(0)'\otimes \bigwedge^{r_1} V(1)'\otimes
\cdots\otimes \bigwedge^{k-(r_0+\cdots+r_{j-1})} V(j)'$
of
$$
\bigwedge^{k} ({A_0}_{| V(0)'}\oplus {A_1}_{| V(1)'}\oplus
\cdots \oplus {A_m}_{| V(m)'}).
$$
Since $\cA\in M_H$, we have $\bigwedge^n\cA\neq 0$.
Hence
$$
\bigwedge^{n} ({B^{(l)}_{j_0}}_{| V(0)'\oplus\cdots
\oplus V(k_0-1)'}\oplus
\cdots \oplus {B^{(l)}_{j_p}}_{| V(k_{p-1})'\oplus\cdots
\oplus V(k_p-1)'})
\neq 0 \quad (l\gg 0).
$$
This means for $l\gg 0$
\begin{equation}
\label{eqn:FullImageBl}
\Im({B^{(l)}_{j_0}}_{| V(0)'\oplus\cdots\oplus V(k_0-1)'}\oplus
\cdots \oplus 
{B^{(l)}_{j_p}}_{| V(k_{p-1})'\oplus\cdots\oplus V(k_p-1)'})=V
\end{equation}
and  
\begin{equation*}
\text{
$B^{(l)}_{j_s}$ are injective on 
$V(k_{s-1})'\oplus\cdots\oplus V(k_s-1)'$}
\end{equation*}
for all $s$.
Since 
\begin{equation}
\label{eqn:VandV'}
V(k_{s-1})=V(k_{s-1})'\oplus
V(k_{s-1}+1)'\oplus\cdots\oplus
V(k_{s}-1)'\oplus V(k_{s}),
\end{equation}
we have by \eqref{eq:ks-js}
\begin{equation}
\label{eq:PreVA=VB}
V(k_s)= \Ker(B^{(l)}_{j_s})\cap V(k_{s-1}) \quad (l\gg 0).
\end{equation}

We claim that
\begin{equation}
\label{eq:VA=VB}
V(k_s)= \Ker(B^{(l)}_{j_s}) \quad (l\gg 0).
\end{equation}
We prove \eqref{eq:VA=VB} by induction on $s$.
Letting $s=0$ in \eqref{eq:PreVA=VB}, we have
$$
V(k_0)= \Ker(B^{(l)}_{j_0})\cap V(k_{-1})
=\Ker(B^{(l)}_{j_0})\cap V=\Ker(B^{(l)}_{j_0}).
$$
For $s>0$, since $\Ker(B^{(l)}_{j_s})\subseteq \Ker(B^{(l)}_{j_{s-1}})$,
we have
$$
V(k_s)= \Ker(B^{(l)}_{j_s})\cap V(k_{s-1})
=\Ker(B^{(l)}_{j_s})\cap \Ker(B^{(l)}_{j_{s-1}})
=\Ker(B^{(l)}_{j_s})
$$
by the induction hypothesis.
We have thus proved the claim \eqref{eq:VA=VB}.

Since $\Ker(B^{(l)}_{j_s})= V(\cB^{(l)})_{j_s+1}$, we have
$$
{B^{(l)}_{j_s+1}}_{|V(k_s)}\neq 0
$$
for $l\gg 0$.
Recalling the choice of $j_{s+1}$ in the proof of Theorem \ref{thm:Sequence},
we see that $j_{s+1}=j_s+1$.
Hence we have $j_s=s$ for $s=0,1,\ldots, p$.

Thus for $l\gg 0$ we have 
\begin{equation}
\label{eqn:VKerBVB}
V(k_{s-1})= \Ker(B^{(l)}_{s-1})=V(\cB^{(l)})_s.
\end{equation}
Hence by \eqref{eqn:VandV'}
\begin{equation}
\Im(B^{(l)}_s)=
\Im({B^{(l)}_s}_{|V(\cA)'_{k_{s-1}}\oplus\cdots\oplus 
V(\cA)'_{k_s-1}}).
\end{equation}
Then from \eqref{eqn:FullImageBl} we obtain
\begin{equation}
\text{$\cB^{(l)}\in M_H$ for $l\gg 0$.}
\end{equation}
Hence, thanks to Proposition \ref{prop:2ndCountability}, 
we have proved that $\P M_H$ is open in $\P M$.

By \eqref{eqn:VKerBVB}, we have
$V(\cB^{(l)})'_{s}=
V(k_{s-1})'\oplus\cdots\oplus V(k_s-1)'$.
Let $r_0+\cdots+r_{k_s+j-1}< k\leq r_0+\cdots+r_{k_s+j}$
with $k_s+j<k_{s+1}$.
Note that
\begin{eqnarray*}
&&
\bigwedge^{r_0} V(0)'\otimes \bigwedge^{r_1} V(1)'\otimes
\cdots\otimes \bigwedge^{r_{k_s-1}} V(k_s-1)'\\
&=&
\bigwedge^{r(\cB^{(l)})_0} V(\cB^{(l)})'_0\otimes \bigwedge^{r(\cB^{(l)})_1} 
V(\cB^{(l)})'_1\otimes
\cdots\otimes \bigwedge^{r(\cB^{(l)})_{s}} V(\cB^{(l)})'_s.
\end{eqnarray*}

Let $k':=k-(r_0+\cdots+r_{k_s-1})$.
On $V(\cB^{(l)})'_{s+1}$, some nonzero scalar multiples of
$$
\bigwedge^{k'} B^{(l)}_{s+1}
$$
converge to
the restriction on
$$
\bigwedge^{r_{k_s}} V(k_s)'\otimes \bigwedge^{r_{k_s+1}} 
V(k_s+1)'\otimes
\cdots\otimes \bigwedge^{k'-(r_{k_s}+\cdots+r_{k_s+j-1})} 
V(k_s+j)'
$$
of
$$
\bigwedge^{k'} ({A_{k_s}}_{| V(k_s)'}\oplus 
\cdots \oplus {A_m}_{| V(m)'})
$$
by the definition of $A_{k_s}, A_{k_s+1},\ldots,A_{k_s+j}$
for $k_s+j< k_{s+1}$ (see the arguments before Lemma \ref{thm:expansion}).

Hence the
restrictions on
$$
\bigwedge^{r_0} V(0)'\otimes \bigwedge^{r_1} V(1)'\otimes
\cdots\otimes \bigwedge^{k-(r_0+\cdots+r_{k_s+j-1})} V(k_s+j)'
$$
of some nonzero scalar multiples of
$$
\bigwedge^{k} ({B^{(l)}_0}_{| V(\cB^{(l)})'_0}\oplus
\cdots \oplus {B^{(l)}_p}_{| V(\cB^{(l)})'_{p}}),
$$
converge to $\bigwedge^k\cA$.

We have thus seen that $\overline{\lambda}(\P\cB^{(l)})\to
\overline{\lambda}(\P\cA)$.
Hence we have proved that
$\overline{\lambda}$ is continuous,
thanks to Proposition \ref{prop:2ndCountability}.
\end{proof}

\begin{corollary}
\label{cor:PM_HtoHingeConti}
$\P\varphi: \P{M}_H\to \Hinge^*(V)$
is continuous.
\end{corollary}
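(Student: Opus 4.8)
The plan is to factor $\P\varphi$ through the projective embedding that defines Neretin's topology and then quote Theorem \ref{thm:lambdaConti}. Recall that, by Neretin's construction, the topology on $\Hinge^*(V)$ coincides with the one induced by the injection
$$
\iota:\Hinge^*(V)\hookrightarrow \prod_{k=1}^{n-1}\P\End(\bigwedge^k V)
$$
sending a hinge to its exterior (Plücker) coordinates; its image is the closure of $\{(\overline{\bigwedge^k g})_k\,|\,g\in PGL(V)\}$, a closed subvariety, so since $\Hinge^*(V)$ is a projective variety (hence compact) and the target is Hausdorff, $\iota$ is a homeomorphism onto its image. Thus it suffices to show that $\iota\circ\P\varphi$ is continuous, and in fact I claim that $\iota\circ\P\varphi=\overline{\lambda}$ on $\P M_H$, where $\overline{\lambda}$ is the map of Section 8.

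The core step is therefore the identity $\overline{\bigwedge^k\varphi(\cA)}=\overline{\bigwedge^k\cA}$ for every $\cA\in M_H$ and every $1\le k\le n-1$: the $k$-th exterior coordinate of the hinge $\varphi(\cA)=(P_0,\dots,P_m)$ must agree with the operator $\bigwedge^k\cA$. I would verify this by unwinding both definitions against the orthogonal decomposition $V=\bigoplus_i V(\cA)'_i$. From the explicit formula $P_i=\{(x,A_ix)\,|\,x\in K(\cA)_{i-1}\}+(\0\oplus\sum_{k<i}\Im({A_k}_{|K(\cA)_{k-1}}))$ of Proposition \ref{prop:PMvsHinge} one reads off that Neretin's exterior construction attached to the hinge picks up exactly $\bigwedge^{r_0}({A_0}_{|V(\cA)'_0})$, then $\bigwedge^{r_1}({A_1}_{|V(\cA)'_1})$, and so on, wedging along the very ``initial segment'' $\bigwedge^{r_0}V(\cA)'_0\otimes\cdots\otimes\bigwedge^{\,k-(r_0+\cdots+r_{j-1})}V(\cA)'_j$ that enters the definition of $\bigwedge^k\cA$. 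For $\cA=g\in GL(V)$ this is just $\bigwedge^k g$, consistently with $\P\varphi_{|PGL(V)}=id_{PGL(V)}$, and the general case follows from the displayed equivariance $\lambda(g\cA h)=\lambda(g)\lambda(\cA)\lambda(h)$ together with the $PGL(V)\times PGL(V)$-equivariance of $\varphi$, which reduces the check to a finite list of block normal forms indexed by the composition $(r_0,\dots,r_m)$ of $n$. This bookkeeping against Neretin's conventions is the main obstacle; the rest is formal.

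Granting the identity, note that $PGL(V)$ is dense in $\P M_H$ (Proposition \ref{prop:dense} and openness of $\P M_H$ from Theorem \ref{thm:lambdaConti}), that $\overline{\lambda}$ is continuous (Theorem \ref{thm:lambdaConti}), and that $\iota$ is a closed embedding with $PGL(V)$ dense in $\Hinge^*(V)$; hence
$$
\overline{\lambda}(\P M_H)=\overline{\lambda}(\overline{PGL(V)})\subseteq\overline{\iota(PGL(V))}=\iota(\Hinge^*(V)).
$$
Therefore $\iota^{-1}\circ\overline{\lambda}:\P M_H\to\Hinge^*(V)$ is well defined and continuous, and by the identity it equals $\P\varphi$. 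This proves that $\P\varphi$ is continuous, which is the assertion of Corollary \ref{cor:PM_HtoHingeConti}.
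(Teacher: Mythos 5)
Your argument is correct and follows essentially the same route as the paper: the paper likewise factors $\overline{\lambda}=\lambda^\circ\circ\P\varphi$ through Neretin's closed embedding $\lambda^\circ:\Hinge^*(V)\to\prod_{k=1}^{n-1}\P\End(\bigwedge^k V)$ and invokes Theorem \ref{thm:lambdaConti}. The only difference is that you sketch a verification of the identity $\lambda^\circ\circ\P\varphi=\overline{\lambda}$ and of the well-definedness of $(\lambda^\circ)^{-1}\circ\overline{\lambda}$, whereas the paper simply asserts the identity and cites Neretin's Theorem 4.4 for the embedding property.
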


\begin{proof}
Neretin defined a mapping $\lambda^\circ: \Hinge^*(V)\to
\prod_{k=1}^{n-1}\P\End(\bigwedge^k V)$, and proved
that
$\lambda^\circ$ is a closed embedding \cite[Theorem 4.4]{Neretin98}.
Since we have $\overline{\lambda}=\lambda^\circ\circ \P\varphi$,
$\P\varphi$ is continuous by Theorem \ref{thm:lambdaConti}.
\end{proof}

\begin{example}
Let
$$
A_0:=
\begin{bmatrix}
1 & 0 & 0 & 0\\
0 & 1 & 0 & 0\\
0 & 0 & 0 & 0\\
0 & 0 & 0 & 0\\
\end{bmatrix},
\quad
A_1:=
\begin{bmatrix}
0 & 0 & 0 & 0\\
0 & 0 & 0 & 0\\
0 & 0 & 1 & 0\\
0 & 0 & 0 & 1\\
\end{bmatrix},
$$
and $A(\epsilon):=A_0+\epsilon A_1$.
Then
$$
\lim_{\epsilon\to 0}\P A(\epsilon)= \P\pi(A_0, A_1)\in \P M_H.
$$
We have $p=0, k_0=2$, $V(0)'=\langle \e_1,\e_2\rangle$, and
$V(1)'=\langle \e_3,\e_4\rangle$.
Then
$$
\bigwedge^3 A(\epsilon):
\left\{
\begin{array}{l}
\e_1 \wedge \e_2 \wedge \e_3 \mapsto 
\epsilon \e_1 \wedge \e_2 \wedge \e_3\\
\e_1 \wedge \e_2 \wedge \e_4 \mapsto 
\epsilon \e_1 \wedge \e_2 \wedge \e_4\\
\e_1 \wedge \e_3 \wedge \e_4 \mapsto 
\epsilon^2 \e_1 \wedge \e_3 \wedge \e_4\\
\e_2 \wedge \e_3 \wedge \e_4 \mapsto 
\epsilon^2 \e_2 \wedge \e_3 \wedge \e_4\\
\end{array}
\right.
$$
Hence
$$
\lim_{\epsilon\to 0}\epsilon^{-1}\bigwedge^3 A(\epsilon)
=id_{\langle \e_1 \wedge \e_2 \wedge \e_3, 
\e_1 \wedge \e_2 \wedge \e_4\rangle}.
$$

We have
$$
\bigwedge^3\pi(A_0, A_1)
=\left(
\bigwedge^3 ({A_0}_{|V(0)'}\oplus {A_1}_{|V(1)'})
\right)_{|\bigwedge^2V(0)'\otimes \bigwedge^1V(1)'}.
$$
Since ${A_0}_{|V(0)'}=id_{V(0)'}$, ${A_1}_{|V(1)'}=id_{V(1)'}$,
and $\bigwedge^2V(0)'\otimes \bigwedge^1V(1)'
=\langle \e_1 \wedge \e_2 \wedge \e_3, 
\e_1 \wedge \e_2 \wedge \e_4\rangle$,
we have
$$
\bigwedge^3\pi(A_0, A_1)
=
\left(
\bigwedge^3 id_V
\right)_{|\langle \e_1 \wedge \e_2 \wedge \e_3, 
\e_1 \wedge \e_2 \wedge \e_4\rangle}.
$$
Hence
$$
\bigwedge^3\pi(A_0, A_1)
=\lim_{\epsilon\to 0}\epsilon^{-1}\bigwedge^3 A(\epsilon).
$$
\end{example}


\end{document}